\title{Duality Theory For Markov Processes: Part I}
\author{Ronald Getoor \\ \medskip
University of California, San Diego\\
9500 Gilman Drive\\
La Jolla, CA 92093-0112\\
rgetoor@math.ucsd.edu\\
(858)534-2640 }
\date{January 19, 2010}
\newtheorem{theorem}{Theorem}
\newtheorem{definition}[theorem]{Definition}
\newtheorem{corollary}[theorem]{Corollary}
\newtheorem{lemma}[theorem]{Lemma}
\DeclareMathOperator{\Exc}{Exc}
\DeclareMathOperator{\ess}{ess \ \underset{s\downarrow 0}{lim} \ sup}
\DeclareMathOperator{\essv}{ess \ \underset{v\downarrow s}{lim} \ sup}
\DeclareMathOperator{\essu0}{ess \ \underset{u\downarrow 0}{lim} \ sup}
\numberwithin{equation}{section}
\renewcommand{\theequation}{A.\arabic{equation}}
\renewcommand{\thelemma}{A.\arabic{equation}}
\def\A{\mathcal{A}}
\def\B{\mathcal{B}}
\def\C{\mathcal{C}}
\def\D{\mathcal{D}}
\def\E{\mathcal{E}}
\def\F{\mathcal{F}}
\def\G{\mathcal{G}}
\def\H{\mathcal{H}}
\def\I{\mathcal{I}}
\def\k{\kappa}
\def\P{\mathcal{P}}
\def\O{\mathcal{O}}
\def\N{\mathfrak{N}}
\def\M{\mathfrak{M}}
\def\R{\mathbb{R}}
\def\oR{\overline{\mathbb{R}}}
\def\oA{\overline{A}}
\def\ofM{\overline{\M}}
\def\oM{\overline{M}}
\def\oE{\overline{E}}
\def\oY{\overline{Y}}
\def\ll{\lim\limits}
\def\sumlim{\sum\limits}
\def\wo{\widehat{\omega}}
\def\wO{\widehat{\Omega}}
\def\~O{\widetilde{\Omega}}
\def\wA{\widehat{A}}
\def\wP{\widehat{\P}}
\def\wp{\widehat{p}}
\def\wQ{\widehat{Q}}
\def\wT{\widehat{T}}
\def\ww{\widehat{w}}
\def\wW{\widehat{W}}
\def\wX{\widehat{X}}
\def\wG{\widehat{\G}}
\def\wZ{\widehat{Z}}
\def\tQ{\widetilde{Q}}
\def\l{\left}
\def\r{\right}
\begin{document}

\maketitle
\noindent \textit{Key Words and Phrases}: Borel right process, moderate Markov process, excessive measure, Kuznetsov measure, homogeneous random measure \\

\noindent 2000 \textit{Mathematics Subject Classification}: Primary 60J40, secondary 60J45

\pagebreak

\section{Introduction}

\large
This is the first part of a proposed monograph on duality theory for an arbitrary Borel right process, $X$, relative to a fixed excessive measure $m$. It is perhaps surprising that one may develop such a rich duality theory under such minimal hypotheses. However, the theory differs from the more standard duality theory as presented in [BG68] or, more recently, in [CW05] in that the dual process, $\hat{X}$, is a left continuous moderate Markov process rather than a right continuous strong Markov process. My eventual aim is to demonstrate the power and usefulness of this more general duality theory. \\

But there is a price to be paid for working under these more general hypotheses. Namely there are some technical measure theoretic issues that must be faced. I have attempted to treat the facts that are required honestly and completely. My guiding principle has been to prove all results that are not available in the standard books on the subject. The first volume of Dellacherie and Meyer's treatise ``Probaliti\'es et Potentiel'', [DM] is the standard reference for the necessary measure theory and I have tried to give precise references to the results of this volume as needed. My one deviation from this principle is the last part of the proof of Theorem 5.28 where Dellacherie's  theorem [D88] characterizing $\mu$-semipolar sets is used. \\

In this first part we prove the existence of the moderate Markov left continuous dual process and develop its basic properties following Fitzsimmons [F87]. The final section of this part contains one of the most important applications of the preceding results: The correspondence between optional copredictable homogeneous random measures and a class of $\sigma$-finite measures on the state space of $X$, once again following [F87]. Although this does not involve the dual process $\hat{X}$ directly, it depends heavily on the results and techniques of the previous sections. This first part of the proposed monograph complements and extends the results of my previous monograph [6]. I give precise references to the results from [6] that are used herein. In Part II, I intend to develop the applications of the duality established in Part I to the potential theory of the underlying Markov process. Since it is not clear when, or even if, Part II will be completed, I have decided to make this first part available online. \\

I now shall describe briefly results in the sections to follow. Section 2 contains a summary of the by now standard facts that will be used in the sequel. It begins with precise statements of the hypotheses to be assumed throughout and of the notation to be used without special mention. Then basic definitions and results are recalled. Of special importance is the Kuznetsov process $(Y,Q_m)$ associated with a Borel right process, $X$, and an excessive measure $m$. We refer the reader to [DMM] for its existence, and to [DMM] and [G] for its properties. Theorem 2.13 gives the crucial maximal extension of the strong Markov property to the birth time $\alpha$. It uses the extended process defined in (2.10) which is the extended process $Y^*$ defined in [G] rather than $\overline{Y}$ used in Fitzsimmons' original paper [F87]. This change is important in later sections; $Y^*$ is, in some sense, the maximal possible extension to $\alpha$ maintaining the strong Markov property there. Although this result is proved in [G], we give an alternate proof which, perhaps, is simpler and more transparent following [FM86] in an appendix to Section 2. The final result in this section is a measure theoretic lemma due to Meyer [M71] that we prepare for use in Section 4. \\

Section 3 contains the required results from the ``general theory'' as extended to cover applications to the Kuznetsov process. The optional and copredictable $\sigma$-algebras are defined and their basic properties established. The key results are the existence of optional and copredictable projections. The results in the present setting are due to Fitzsimmons [F87]. However we give complete proofs as there are some technical difficulties because $Q_m$ is not a finite measure in general. In the copredictable situation, we first construct an appropriate predictable projection. The desired copredictable projection is then obtained by reversing time. The optional case also requires some care since we require an optional projection that is maximally extended to a portion of the birth time which does not follow by a straightforward extension of the classical situation. Section 4 is devoted to establishing the existence and uniqueness of the dual process $\hat{X}$. This, of course, is the crucial result for the entire theory and under these hypotheses is due to Fitzsimmons [F87]. For earlier work see also Azema [A73] and Jeulin [J78]. Our proof follows Fitzsimmons quite closely. The reader may want to omit the technical details on a first reading. The section closes with a version of the familiar commutation of projections. \\

The final section of this part is devoted to the proof of the correspondence between optional copredictable homogeneous random measures and $\sigma$-finite measures on the state space not charging $m$-exceptional sets. Theorem 5.28 is the statement of the fundamental result. It also is due to Fitzsimmons [F87] and our proof follows his original proof quite closely with one exception. Namely, in proving the existence of a ``very good'' version of an optional copredictable homogeneous random measure we make use of Meyer's master perfection theorem [M74] rather than a result from [DG85]. The version of Meyer's theorem needed for Theorem 5.28 is stated as Theorem 5.27. Meyer's theorem is proved in [G], but the proof there makes use of perfection theorems in [S]. In order to have a complete proof of the important Theorem 5.28, we give a detailed proof of Theorem 5.27 in an appendix. The proof is somewhat simpler under the hypotheses needed for Theorem 5.28 and, hopefully, more accessible than the proof in [G]. \\

We close this introduction with a few words on notation. We shall use $\mathcal{B}$ to denote the Borel subsets of the real line $\mathbb{R}$ and $\mathcal{B}^+$ for the Borel subsets of $\mathbb{R}^+ = \{x \in \mathbb{R}: x \ge 0\}$. If $(F, \mathcal{F}, \mu)$ is a measure space, then $b\mathcal{F}$ (resp. $p\mathcal{F}$) denotes the class of bounded real-valued (resp. $[0,\infty]$-valued) $\mathcal{F}$-measurable functions on $F$. For $f \in p\mathcal{F}$ we shall use $\mu(f)$ to denote the integral $\int_F f d\mu$; similarly if $D \in \mathcal{F}$ then $\mu(f;D)$ denotes $\int_D f d\mu$. We write $\mathcal{F}^*$ for the universal completion of $\mathcal{F}$; that is, $\mathcal{F}^* = \cap_\nu \mathcal{F}^\nu$, where $\mathcal{F}^\nu$ is the $\nu$-completion of $\mathcal{F}$ and the intersection runs over all finite measures on $(F,\mathcal{F})$. If $(E, \mathcal{E})$ is a second measurable space and $K=K(x,dy)$ is a kernel from $(F,\mathcal{F})$ to $(E,\mathcal{E})$ (\textit{i.e.,} $F \ni x \rightarrow K(x,A)$ is $\mathcal{F}$-measurable for each $A \in \mathcal{E}$ and $K(x,\cdot)$ is a measure on $(E,\mathcal{E})$ for each $x \in F)$, then we write $\mu K$ for the measure $A \rightarrow \int_F \mu(dx) K(x,A)$ and $Kf$ for the function $x \rightarrow \int_E K(x,dy) f(y)$. If $x \in E$, $\epsilon_x: \mathcal{E} \rightarrow [0,1]$ denotes the unit mass at $x$, sometimes called the Dirac measure at $x$.


\def \comp \ {{\leavevmode
     \raise.2ex\hbox{${\scriptstyle\mathchar"020E}$}}}
\noindent\textbf{2. Preliminaries}

\medskip\noindent
We fix a Borel right semigroup $P:= \{P_t; t \geq 0\}$ on a Lusin topological space $(E, \E)$.  We shall now explain this in detail.  
$E$ is (homeomorphic to) a Borel subset of a compact metrizable space and $\E$ is the Borel $\sigma$-algebra of $E$.  
For each $t \geq 0, \ P_t$ is a subMarkovian kernel on $(E,\E)$ and $P_0 (x,\cdot)= \varepsilon_x$.  In particular for $t \geq 0, \ x \in E, \ B \in \E, \ x \to P_t(x,B)$ is $\E$ measurable, $0 \leq P_t(x,B) \leq P_t (x,E) \leq 1$ and $\ll_{t \to 0} P_t(x,E) =1$ for all $ x \in E$.  Of course, $P_{t+s} =P_t P_s =P_sP_t$ for $s,t \geq 0$.  In order to account for the fact that $P_t(x,E)$ may be strictly less than one we adjoin a point $\Delta$ to $E$ as an isolated point and write $E_\Delta := E \cup \{ \Delta\}$ and $\E_\Delta:= \sigma( \E, \{\Delta\})$.  As usual $P_t$ is extended to $(E_\Delta, \E_\Delta)$ in the standard manner so that $\Delta$ is a trap and the extend semigroup again denoted by $P=\{P_t; t \geq 0\}$ is Markovian on $(E_\Delta, \E_\Delta)$.  In particular $P_t(x,\{\Delta\}) =1-P_t(x,E)$ for $x \in E$.  We adopt the standard convention that functions (measures) defined on $E \ (\E)$ are extended to $E_\Delta \ (\E_\Delta)$ by setting them equal to zero at $\Delta \ (\{\Delta\})$ unless explicitly stated otherwise.  Let $\widetilde{\Omega}$ denote the set of all right continuous functions $\omega: [0,\infty] \to E_\Delta$ with the properties 
\begin{inparaenum}[(i)]
\item
$\omega(\infty)=\Delta$ and
\item
if $\zeta(\omega):= \inf \{ t \geq 0: \omega (t)=\Delta\}$ 
\end{inparaenum}
then $\omega(t)=\Delta$ for all $t \geq \zeta(\omega)$ with the usual convention that the infimum of the empty set is $+\infty$.  Define $X_t(\omega):= \omega(t)$ and $\F^0 := \sigma(X_t:t \geq0), \ \F^0_t := \sigma (X_s; \ 0 \leq s \leq t)$.  In particular $X_\infty(\omega)=\Delta$.  The crucial assumption on $P$ is the following: 

\medskip
\noindent (2.1)
{\it For each probability measure $\mu$ on $(E, E_\Delta)$ there exists a unique probability measure $P^\mu$ on $(\widetilde{\Omega},\F^0)$ so that $X:= (X_t, t \geq0)$ is a strong Markov process under the law $P^\mu$ with respect to the filtration $(\F^0_t)$.} 

\medskip \noindent
$X$ is called the canonical realization of the Borel right semigroup $P$.  Because $(E, \E)$ is a Lusin topological space, this is equivalent to the conjunction of (HD1) and (HD2) in [DMM XVII; 1.3, 1.5], and (HD3) and (HD4) in [DM XVII; 1.5] are consequences of (2.1).  For notational simplicity we shall often write $X(t)$ for $X_t$ and $X(t,\omega)$ for $X_t(\omega)$.  The same convention will be used for the Kuznetsov process, $Y$, to be introduced shortly.

\medskip\noindent
Having spelled out in detail the definition of a Borel right semigroup we suppose the reader is familiar with the basic properties of such semigroups and its canonical realization $X=(\~O, \F^0, \F^0_t, X_t, \theta_t, P^x)$ where $P^x=P^{\varepsilon_x}$ and $\theta_t :\~O \to \~O$ is given by $(\theta_t \omega)(s) =\omega(s +t)$.  Chapter XVII of [DMM] contains an excellent summary of these properties.  (Note that the numbering system in [DMM] differs slightly from that used in preceding volumes of [DM].)  Other standard references are listed at the beginning of the bibliography.  We fix a Borel right semigroup $P=(P_t; t \geq0)$ and let $X$ be its canonical representation.  The resolvent of $P, \{U^q; \ q \geq0\}$ is given by

\begin{equation} \tag{2.2}
U^q f(x):= \int_0^\infty e^{-qt} P_t f(x) dt 
=P^x \int_0^\zeta e^{-qt} f(X_t) dt
\end{equation}
for $x \in E, \ f \in p\E$.  In view of our convention that $f(\Delta)=0$ one may replace $\int_0^\zeta$ by $\int_0^\infty$ in (2.2).  We adopt the standard convention here and in the sequel to omit the parameter $q$ when $q=0$.  Thus $U=U^0$.  For the most part we shall use the standard notation for Markov processes as in the reference books [BG], [DM], [G] or [S] without special mention.

\medskip\noindent
For ease of reference we recall some standard definitions.  See, for example, [G], [DM] or [DMM].  A $\sigma$-finite measure $\xi$ on $(E, \E)$ is $q$-excessive for $q \geq 0$ provided $e^{-qt} \xi P_t \leq \xi$ for $t \geq 0$.  It is convenient to define $P_t^q:= e^{-qt} P_t$.  Clearly $(P_t^q, t \geq 0)$ is again a semigroup.  It follows that $\xi P_t^q \uparrow \xi$ as $t \downarrow 0$ for a $q$-excessive $\xi$ [DMXII; 37.1].  The class of $q$-excessive measures is denoted by $Exc^q$.  It is standard fact that a $\sigma$-finite measure $\xi$ is in $Exc^q$ if and only if $r \xi U^{q+r} \leq \xi$ for $r >0$.  Then $r \xi U^{q+r} \uparrow \xi$ as $r \uparrow \infty$.  There are several important subclasses of excessive measures.  Here we give the basic definitions and refer to [G] for their properties.  We write them for $q=0$ only, the extensions of these definitions to $q>0$ being clear.  Let $\xi \in Exc$.  Then $\xi$ is {\it purely excessive} provided $\xi P_t \to 0$ as $ t \to \infty$ and $\xi$ is {\it invariant} provided $\xi P_t =\xi$ for all $t \geq 0$.  $\xi$ is a {\it potential} provided $\xi=\mu U$ for some necessarily unique and $\sigma$-finite measure $\mu$.  $\xi$ is {\it harmonic} provided it strongly dominates no non-zero potential; that is, if $\xi =\mu U+\eta$ with $\eta \in Exc$, and $\eta \neq 0$, then $\mu=0$.  $\xi$ is {\it dissipative} provided it is the supremum of all potentials $\mu U$ with $\mu U \leq \xi$.  Then $\xi$ is, in fact, an increasing limit of a sequence $(\mu_nU)$ of potentials.  Finally $\xi$ is {\it conservative} provided it dominates no non-zero potential in natural order of measures; that is, if $\mu U \leq \xi$ as measures, then $\mu=0$.  These classes of excessive measures are denoted by $Pur, \ Inv, \ Pot, \ Har, \ Dis$ and $Con$ respectively.  There correspond three unique Riesz type decompositions of $Exc$:

\medskip
\noindent
\begin{equation}\tag{2.3}
\begin{cases}
&(i) \ Exc= Pur +Inv,\\
&(ii) \ Exc= Pot + Har,\\
&(iii) \ Exc= Dis + Con.
\end{cases}
\end{equation}

\medskip\noindent
An {\it entrance rule} is a family $\nu= (\nu_t; t \in \R)$ of $\sigma$-finite measures on $(E,\E)$ such that $\nu_s P_{t-s} \leq \nu_t$ for $s<t$ and $\nu_sP_{t-s} \uparrow \nu_t$ as $s \uparrow t$.  Such an entrance rule is called a regular entrance rule in [DMM XIX; \S 2], but this is the only type we shall consider.  If $\xi \in Exc$, then $\nu_t:= \xi$ for $t \in \R$ clearly defines an entrance rule.  An {\it entrance law at} $s \in \R$ is an entrance rule $\nu$ such that $\nu_t=0$ for $t \leq s$ and $\nu_t P_r= \nu_{t+r}$ for $t>s$.  An {\it entrance law} is an entrance law at $s=0$.

\medskip\noindent
We are now ready to introduce the notation necessary to describe Kuznetsov measures.  Let $W$ be the space of paths $w: \R \to E \cup \{\Delta\}$ that are $E$-valued and right continuous on an open interval $] \alpha(w), \beta(w) [$ and take the value $\Delta$ off this interval.  The path $[\Delta]$ constantly equal to $\Delta$ corresponds to $]\alpha,\beta[$ being empty and we set $\alpha([\Delta]) =+\infty, \ \beta([\Delta])=-\infty$.  Define $w(-\infty)= w(\infty)=\Delta$ for $w \in W$ and let $(Y_t, t \in \R)$ denote the coordinate process on $W, \ Y_t(w)=w(t)$ and $Y_{\pm \infty}(w) =w(\pm \infty) =\Delta$.  Define $\G^0 = \sigma (Y_t; \ t \in \R)$ and $\G^0_t = \sigma (Y_s; \ s \leq t)$.  Note that $\alpha$ is a stopping time relative to the filtration $(\G^0_{t+}, \ t \in \R)$.  We can now state the basic existence theorem for Kuznetsov measures.  Section 9 of [DMM XIX] contains a nice proof.  Of course our assumption that $P= (P_t, t \geq 0)$ is a Borel right semigroup is still in force.

\noindent 
{\bf 2.4 Theorem}
{\it
Let $\nu=(\nu_t)$ be an entrance rule.  Then there exists a unique measure $Q_\nu$ on $(W,\G^0)$ not charging $\{[\Delta]\}$ such that if $t_1< \dots, t_n$,
\begin{equation}\tag{2.5}
\begin{split}
Q_\nu 
&(\alpha< t_1, Y_{t_1} \in dx_1, \dots, Y_{t_n} \in dx_n, t_n < \beta)\\
&= \nu_{t_1} (dx_1) P_{t_2-t_1}(x_1, dx_2)
\dots P_{t_n -t_{n-1}} (x_{n-1}, dx_n).
\end{split}
\end{equation}
Moreover $Q_\nu$ is $\sigma$-finite.
}

\medskip \noindent
If $m \in Exc$ and $\nu_t =m$ for all $t$ we write $Q_m$ in place of $Q_\nu$.  $Q_\nu$ or $Q_m$ is called the Kuznetsov measure of $\nu$ or $m$.  Let $\G^m$ (resp. $\G^\nu$) denote the completion of $\G^0$ with respect to $Q_m$ (resp. $Q_\nu$) and $\G_t^m$ (resp. $\G_t^\nu$) denote the $\sigma$-algebra generated by $\G_t^0$ and all $Q_m$ (resp. $Q_\nu$) null sets in $G^m$ (resp. $\G^\nu$).  The process $(Y_t, Q_m, t\in \R)$ (resp. $(Y_t, t \in \R, Q_\nu)$) is called the Kuznetsov process of $m$ (resp. $\nu$).  Define shift operators $\sigma_t, t \in \R$ on $W$ by
\[
(\sigma_t w)(s) =w(t+s), \ s\in\R
\]

\medskip\noindent
If $m \in Exc$, then only differences of $t$-values appear on the right side of (2.5) when $\nu_t =m$ for all $t$.  Therefore $Q_m$ is stationary; that is, $\sigma_t (Q_m) =Q_m$ for $t \in \R$.  But $\alpha \circ \sigma_t =\alpha -t$ and so $Q_m(\alpha=t)= Q_m (\alpha=0)$ for each $t$.  Since $Q_m$ is $\sigma$-finite this implies that $Q_m (\alpha=t)=0$ for each $t$.  Similarly $Q_m(\beta=t)=0$ for each $t$.  The following Markov property of the process $Y$ under $Q_\nu$ is a standard consequence of (2.5).

\medskip \noindent
{\bf 2.6 Proposition}
{\it
For each $t \in \R$ the process $(Y_{t+s}; s \geq0)$ under $Q_\nu$ restricted to $\{ \alpha < t< \beta\}$ has the same law as $X$ under $P^{\nu_t}$.  In particular $Q_\nu$ is $\sigma$-finite on $\G_t^t:= \G_t^0 \mid _{\{ \alpha < t< \beta\}}$ and if $H \in p \sigma (Y_s; s \geq t),$ then $Q_\nu (H \mid \G_t^t)$ is $\sigma (Y_t) \mid_{\{ \alpha< t< \beta\}}$ measurable.
}

\medskip\noindent
Next we introduce for $t \in \R$ the birthing $b_t$, killing $k_t$ and truncated shift $\theta_t$ operators on $W$ as follows:

\begin{equation}\tag{2.7}
\begin{cases}
&(b_tw)(s)=w(s) \text{ if } s>t, \ (b_t w) (s) =\Delta \text{ if } s \leq t;\\
&(k_tw)(s)=w(s) \text{ if } s<t, \ (k_t w) (s) =\Delta \text{ if } s \geq t;\\
&(\theta_tw)(s)=w(s+t) \text{ if } s>0, \ 
(\theta_t w) (s) =\Delta \text{ if } s \leq 0.\\
\end{cases}
\end{equation}

\noindent
Note that $\theta_t =b_0 \sigma_t =\sigma_t b_t$ and so $\theta_t \sigma_s =\theta_{t+s}$.  In order to more conveniently express the relationship between the Borel right process $X$ and the Kuznetsov process $Y$ define

\[
\Omega =\{ w \in W: \alpha(w) =0 \text{ and } w(0+) \text{ exists in } E \} \cup \{[\Delta]\}.
\]

\noindent
Clearly $\Omega$ may be identified with the canonical path space $\~O$ of $X$ and defining $X_t =Y_t \mid_\Omega$ if $t>0, \ X_0(\omega) =Y_{0+} (\omega)$ for $\omega \in \Omega$, then $(X,\Omega)$ is isomorphic to the canonical realization of $(P_t)$.  This is the realization that will be used henceforth.  
Observe that $\theta_t w \in \Omega$ if 
$ t \in \mathbb{R} \mbox{\ and\ } \alpha(w) \leq t < \infty$.
Also observe that $\F^0 := \sigma (X_t; t \geq 0) =\G^0 \mid_\Omega = \G_{>0}^0 \mid_\Omega$ and that $\F_t^0 := \sigma (X_s; s \leq t) = \G_t^0 \mid_\Omega$ if $t>0$ while $\F_0^0 =\G_{0+}^0 \mid_\Omega$.

\medskip\noindent
The strong Markov property of $(Y_t)$ under $Q_\nu$ is proved in [DDM XIX; \S7,8] although it is not explicitly stated there.  We state it here as a proposition.

\medskip\noindent
{\bf 2.8 Proposition}
{\it
Let $T:W \to \oR =[-\infty, \infty]$ be a $(\G_{t+}^0)$ stopping time and $F \in p \F^0$, then
\[
Q_\nu (F \circ \theta_T \mid \G_{T+}^0) =P^{Y(T)} (F) 
\text{ on } \{ \alpha < T < \beta \}
\]
and $Q_\nu$ is $\sigma$-finite on the trace, $\G_{T+}^0 \mid_{\{ \alpha<T<\beta\}}$ of $\G_{T+}^0$ on $\{ \alpha <T <\beta\}$.
}

\medskip\noindent
The most important case in the sequel is $\nu_t =m \in Exc$ for all $t$.  In particular the filtration $(\G_t^m)$ is right continuous.

\medskip\noindent
It is necessary to extend (2.8) to include $\{T=\alpha \}$ as far as possible.  For this we introduce a modified process $Y^\ast$ as in [G; 6.12].  This process will play a crucial role in the sequel.  To this end let $d$ be a totally bounded metric on the Lusin space $E$ compatible with the topology of $E$, and let $\D$ be a countable uniformly dense subset of the $d$-uniformly continuous bounded real-valued functions on $E$.  Given a strictly positive $b \in b\E$ with $m(b)<\infty$ defined $W(b) \subset W$ by the conditions:

\begin{equation}\tag{2.9}
\begin{cases}
& \text{(i) } \alpha \in \R,\\
& \text{(ii) } Y_{\alpha +} := \lim_{t \downarrow \alpha} Y_t
\text{ exists in } E,\\
& \text{(iii) } U^q f \circ Y_{\alpha +1/n} \to U^q f \circ Y_{\alpha +} 
\text{ as } n \to \infty, \text{ for all } f \in \D \text{ and rational } q>0,\\
& \text{(iv) }Ub \circ Y_{\alpha +1/n} \to Ub \circ Y_{\alpha +}
\text{ as } n \to \infty.
\end{cases}
\end{equation}

\noindent
It is evident that $\sigma_t^{-1} W(b) =W(b)$ for all $t \in \R$ and $W(b) \in \G_{\alpha +}^0$.  Define

\begin{equation}\tag{2.10}
Y_t^\ast(w) =
\begin{cases}
Y_{t+}(w) &\text{ if } t= \alpha(w) \text{ and } w \in W(b),\\
Y_t(w) &\text{otherwise.}
\end{cases}
\end{equation}

\noindent
Note that $Y_t^\ast (w) =Y_t(w)$ if $w \not \in W(b)$ and that $Y_t^\ast (w) =Y_{t+} (w)$ if $w \in W(b)$.  Since $W(b) \in \G_{\alpha +}^0$ it follows that $Y^\ast$ is $(\G_{t+}^0)$ adapted.  Hence $Y^\ast$ is optional relative to the filtration $(\G_{t+}^0)$.  In particular $Y^\ast$ is $(\G_t^m)$ optional.  Moreover $Y_t^\ast(w)=\Delta$ if $-\infty <t < \alpha(w)$.  The set

\begin{equation}\tag{2.11}
\Lambda^\ast:= \{ (t,w) : Y_t^\ast (w) \in E \}
\end{equation}
will play an important role if the sequel.  It is evident that $\theta_t w \in \Omega$ if $(t,w) \in \Lambda^\ast$ and that $\rrbracket \alpha,\beta \llbracket  \subset \Lambda^\ast \subset \llbracket  \alpha, \beta \llbracket$.  Observe that $W(b) \cap \{ \alpha =0 \} \subset \Omega \smallsetminus \{[\Delta ]\}$ and since $X$ is a Borel right process $P^x(\Omega \smallsetminus W(b))=0$ if $x \in E$.  Note that $W(b)$ depends on $m$ and $b$ only through (2.9-- iv).  Fix $m \in Exc$ and $b$ as above.  If $m = \eta + \pi = \eta + \rho U$ is the decomposition of $m$ into its harmonic and potential parts (2.3), then $Q_m = Q_\eta + Q_\pi$ and according to [G; 6.19], $Q_\eta (W(b))=0$ and $Q_\pi =Q_m (\cdot; W(b))$.  In particular, if $b'$ is another function with the properties of $b$, then $Q_m (W(b) \Delta W(b'))=0$.

\medskip
\noindent
{\bf 2.12 Remark }
{\it
Let $\oE$ be a Ray-Knight compactification of $E_\Delta$.  See [S; \S 17-18], [DM XII; \S 5] or [G75].  Since $X$ is a Borel right process in its Ray topology, one may define $W^r (b)$ analogously to $W(b)$ when $E$ is given the Ray topology.  In this case (iii) of (2.9) is automatically satisfied by the very definition of the Ray-Knight compactification.  Therefore $W^r(b)$ is characterized by (i), (ii) and (iv) of (2.9) with $Y_{\alpha +}$ replaced by $Y_{\alpha +}^r := \lim_{t \downarrow \alpha} Y_t$ where the limit is taken in the Ray topology.  The decomposition $m=\eta + \pi$ into harmonic and potential parts depends only on the resolvent and so $Q_m (W(b) \Delta W^r (b))=0$.
}

\medskip\noindent
We are now able to state the important extension of (2.8) mentioned earlier.  It is Proposition 6.15 in [G] and is proved there.  However for completeness we shall give an alternate proof which, perhaps, is simpler and more transparent.  but in order not to interrupt the flow of this section we defer it to an appendix for this section.

\medskip \noindent
{\bf 2.13 Theorem}
{\it 
Let $T$ be a $(\G_t^m)$ stopping time.  Then $Q_m$ is $\sigma$-finite on $\G_T^m \mid_{\{Y^\ast(T) \in E \}}$ and 
\[
Q_m (F \circ \theta_T \mid \G_T^m) = P^{Y^\ast (T)}(F) \text{ on }
\{ Y^\ast_T \in E \}
\]
for $F \in p \F^\ast$.
}

\noindent
Note that $\{ Y_T^\ast \in E \} = \{ \alpha \leq T<\beta\} \cap W(b)$.

\medskip\noindent
We remind the reader that a subset $A \subset E$ is $m$-polar (resp. $m$-semipolar) provided $A \subset B, \ B \in \B$ such that a.s. $Q_m, \ \{ t: Y_t \in B\}$ is empty (resp. countable).

\medskip\noindent
For $m \in Exc$, let $m = \eta + \rho U$ be its decomposition into its harmonic and potential parts.  An arbitrary subset $A \subset E$ is $m$-{\it exceptional} provided $A$ is contained is a Borel $m$-polar set $B$ with $\rho(B)=0$.  Let $\N(m)$denote the class of $m$-exceptional sets.  It is easily checked that $B \in \E$ is $m$-polar if and only if $\{(t,w) \in \R \times W: Y_t (w) \in B\}$ is $Q_m$ evanescent.  There is a similar characterization of $m$-exceptional sets.

\medskip\noindent
{\bf 2.14 Proposition}
{\it
$B \in \N(m) \cap \E$ if and only if 
\[
\{(t,w) \in \R \times W: Y_t^\ast(w) \in B\}
\]
is $Q_m$ evanescent.
}

\begin{proof}
Since $Y$ and $Y^\ast$ may differ only at $\alpha \in \R$ on $W(b)$, it suffices to show that if $B \in \E$ is $m$-polar, then $\rho(B)=0$ is equivalent to $Q_m (Y^\ast_\alpha \in B)=0$.  But this is clear from [G; 6.20] which states in our current notation that $Q_m(Y_\alpha^\ast \in B; 0< \alpha < 1) = \rho (B)$ and from the stationarity of  $Q_m$.
\end{proof}

\medskip\noindent
We conclude this section with a lemma that will be used later.  The reader may want to skip it now and just refer it later as needed.  It is due to Meyer [M71].

\medskip\noindent
{\bf 2.15 Lemma}
{\it 
Let $(E,\E)$ be an arbitrary measurable space, $M$ a separable metrizable space and $\M$ its Borel $\sigma$-algebra. 
Let $N$ be a kernel from $(E, \E)$ to $(M,\M)$ such that
$N(x,M)< \infty$.  If $A$ is a Souslin subset of $M$, then $\{x: N(x,A)>0\}$ is a Souslin subset of $E$.
}

\begin{proof}
Since $\{ x: N(x,M) =0 \} \in \E$ we may delete it from $E$ and suppose that $N(x,M)>0$ for all $x \in E$.  Then replacing $N(x,\cdot)$ by $N(x, \cdot)/ N(x,M)$ we may suppose without loss of generality that $N(x, \cdot)$ is a probability for each $x \in M$.  But $M$ is homeomorphic to a subspace of a compact metric space $\oM$ and so we may suppose without loss of generality that $M \subset \oM$. Then $\M= \ofM \mid_M$ where $\ofM$ is the Borel $\sigma$-algebra of $\oM$.  Extending $N$ to $\ofM$ by $N(x,B) =N(x, B \cap M)$ for $B \in \ofM, N$ becomes a kernel from $(E, \E)$ to $(\oM, \ofM)$ that is carried by $M$.  Since $A$ is Souslin in $M$, it is analytic in $M$ [DM III; 19(2)] and therefore $A=\oA \cap M$ where $\oA$ is analytic in $\oM$ [DM III; 15].  Let $\P(\oM)$ be the set of all probability measures on $\oM$ equipped with the weak topology generated by the continuous functions from $\oM$ to $\R$.  Now $1_{\oA}$ is an analytic function on $\oM$ as defined in [DM III;61].  Hence by [DM III;62] the function $\mu \to \mu(\oA)$ is an analytic function on $\P(\oM)$.  Therefore $J:= \{ \mu \in \P(\oM): \mu (\oA)>0 \}$ is analytic, and hence Souslin in $\P(\oM)$.  Since the map $h:x \to N(x,\cdot)$ is measurable from $E$ to $\P(\oM), \ h^{-1}(J)$ is  Souslin in $E$.  The fact that the inverse image of a Souslin set under a measurable map is Souslin follows easily from the characterization of Souslin sets in terms of Souslin schemes.  See [DM III; 75-77].  But $\{x:N(x,A)>0\} =\{ x: N(x,\oA)>0 \}=h^{-1}(J)$ establishing (2.15).
\end{proof}

\medskip\noindent
{\bf 2.16 Remark}
{\it 
It is important for later applications that the spaces $\Omega$ and $W$ are co-Souslin.  For $\Omega$ this is [DM III; 19b].  A similar argument works for $W$ also.
}

\medskip\noindent
{\bf 2.17 Remark}
{\it 
Since $\theta_T w \in \Omega$ if $Y_T^\ast (w) \in E, \ F 1_\Omega \in p \F^0$ if $F \in p \G^0$ and $P^x$ is carried by $\Omega$, it follows that (2.13) is valid for $F \in p\G^0$.  We shall use (2.13) for $F \in p\G^0$ without special mention in the sequel.
}


Finally, for ease of reference, we record the monotone class theorem [DMI; 21] as improved by Bruce Atkinson---see the additions and corrections on page 231 of the volume containing Chapters IX to XI of [DM].

\medskip\noindent
{\bf (2.18) Theorem}
{\it
Let $E$ be a non-empty set and let $\H$ be a vector space of bounded
real-valued functions on $E$.  Suppose that $\H$ contains the constants and that
$\H$ is closed under monotone increasing limits
of uniformly bounded sequences of positive functions in $\H$; that is, 
if $(f_n) \subset p\H, \mbox{sup}\{f_n(x): x \in E\} \leq M < \infty$ for
$ n \geq 1$ and $f_n \uparrow f$, then $f \in \H$.  Let $\H_0$ be a subset of $\H$ closed under finite products.  Then $b \sigma(\H_0) \subset H$.
}

\vspace{.42in}
\noindent
{\bf Appendix}

\medskip
This appendix contains an alternate proof of Theorem 2.13.  In essence it has the same structure as the proof in [G] but the details are somewhat different.  In view of (2.8) and since 
$\{Y^\ast_T \in E\}= \{ \alpha <T< \beta\} \cup 
\{ T = \alpha \} \cap W(b)$ 
it suffices to show that 
$Q_m \l( F \circ \theta_T \mid \G_{T+}^0 \r) = P^{Y^\ast(T)} (F)$
on $\{ T = \alpha \} \cap W(b)$ and that $Q_m$ is $\sigma$-finite of 
$\G_{T+}^0 \mid_{\{Y^\ast (T) \in E\}}$.  The equality is equivalent to
\[
Q_m \l( F \circ \theta_T; \ T=\alpha, \ \Lambda, \ W(b) \r)
=Q_m \l[ P^{Y^\ast(T)} (F); \ T= \alpha, \ \Lambda \cap W(b) \r]
\]
for all $\Lambda \in \G_{T+}^0$.  
Replacing $T$ by $T_\Lambda := T$ on $\Lambda, T_\Lambda := \infty$ on $\Lambda^c$ this is reduced to showing 
\begin{equation}\tag{2.19}
Q_m 
\l[ F \circ \theta_T; \ T=\alpha, W(b) \r]
= Q_m \l[ P^{Y^\ast (T)} (F); \  T = \alpha, \ W(b) \r],
\end{equation}
for all $(\G_{t+}^0)$ stopping times $T$, and $F \in p\F^0$.  Let $m=m_i +m_p$ be the decomposition of $m$ into its invariant, $m_i$, and purely excessive, $m_p$, parts.  Then $Q_m=Q_{m_i} +Q_{m_p}$ and according to [G; 6.7], $Q_{m_i} ( \alpha> -\infty) =0$.  But $W(b) \subset \{ \alpha \in \R \}$ and so in proving (2.17) we may suppose that $m \in Pur$ without loss of generality.  Therefore as is well-known $m=\int_0^\infty \nu_t dt$ where $\nu =(\nu_t; \ t>0)$ is an entrance law (at zero).  This also appears in [G; 6.7].  Let $Q_\nu$ denote the Kuznetsov measure of $\nu$.  $Q_m$ and $Q_\nu$ are related by (see, [G; 6.11]) $Q_m = \int_\R \sigma_t (Q_\nu) dt$.  Also $Q_\nu (\alpha \neq 0)=0$, see page 54 of [G].  Set $\Omega(b) =W(b) \cap \Omega = W(b) \cap \{ \alpha=0 \}$.

\medskip
We begin by showing that $Q_m$ is $\sigma$-finite on $\G_{T+}^0 \mid_{\{ Y^\ast(T) \in E\}}$ and that $Q_\nu$ is $\sigma$-finite on $\G_{0+}^0 \mid_{\Omega (b)}$.  Since $Y_{0+}= Y_0^\ast$ exists in $E$ on $\Omega (b)$
\begin{equation}\tag{2.20}
\begin{split}
Q_\nu 
&[ Ub \circ Y_{0+}; \ \Omega(b) ] 
\leq \lim\inf Q_\nu [Ub \circ Y_{1/n}]\\
&= \lim\inf \nu_{1/n} (Ub) 
\leq \int_{1/n} \nu_t (b) dt 
\leq m(b) < \infty,
\end{split}
\end{equation}
where $b$ comes from (2.9).  Now $Ub \circ Y_{0+} \in \G_{0+}^0$ and $Ub \circ Y_{0+} >0$ on $\Omega(b)$; hence $Q_\nu$ is 
$\sigma$-finite on $\G_{0+}^0 \mid_{\Omega(b)}$. From (2.8), $Q_m$ is $\sigma$-finite on $\G_{T+}^0 \mid_{\{ \alpha <T<\beta\}}$.  But $\{ Y_T^\ast \in E \} = \{ \alpha < T<\beta\} \cup \{ T= \alpha, Y_\alpha^\ast \in E\}$ and $\{ T= \alpha \} \cap \{ Y_\alpha^\ast \in E\} \in \G_{T+}^0 \mid_{\{T=\alpha, \ Y_T^\ast \in E\}}$.  
Recall that $\alpha$ is a $(\G_{t+}^0)$ stopping time.  Consequently it suffices to show that $Q_m$ is $\sigma$-finite on $\G_{T+}^0 \mid_{\{T=\alpha, \ Y_\alpha^\ast \in E\}}$.  Let $g>0$ be a Borel function on $\R$ with $\int g(t) dt < \infty$.  Then using the relationship between $Q_m$ and $Q_\nu$ we find
\begin{align*}
&Q_m 
[g(\alpha) Ub \circ Y_\alpha^\ast; \ T= \alpha, \ Y_\alpha^\ast \in E]
 \leq Q_m [g(\alpha) Ub \circ Y_\alpha^\ast; W(b)]\\
& = \int Q_\nu [g(\alpha-t) Ub \circ Y_\alpha^\ast; \ \Omega(b)]dt
= \int g(t) dt \cdot Q_\nu [Ub \circ Y_{0+}; \ \Omega(b)] < \infty
\end{align*}
in view of (2.20).  The first equality above uses the facts that 
$Y_\alpha^\ast \circ \sigma_t =Y_\alpha^\ast, \ \sigma_t^{-1} W(b) =W(b)$ and that $Q_\nu$ is carried by $\{ \alpha =0\}$.  Hence $Q_m$ is $\sigma$-finite on $\G_{T+}^0 \mid_{\{Y^\ast (T) \in E\}}$.

\medskip
For the next step we require a lemma.

\medskip\noindent
{\bf (2.21) Lemma.}  
{\it Let $F \in p\F^0$ and $\Lambda \in \G_{0+}^0$.  Then for $t \geq 0$,}
\[
Q_\nu [F \circ \theta_t; \ \Lambda \cap \Omega(b)] 
= Q_\nu [P^{Y^\ast(t)} (F); \ \Lambda \cap \Omega(b)].
\]

\begin{proof}
Since $Q_\nu$ is $\sigma$-finite on $\G_{0+}^0 \mid_{\Omega(b)}$ for each $n \geq 1$ there exists $h_n \in \G_{0+}^0 \mid_{\Omega(b)} $ with $0 \leq h_n \leq 1$ and $Q_\nu (h_n; \Omega(b))<\infty$ such that $h_n \uparrow 1$ on $\Omega(b)$ as $n \uparrow  \infty$.  Clearly on $\Omega(b), \ Y_t^\ast = Y_t$ if $t>0$ and $Y_0^\ast= Y_{0+} =X_0$.  Fix $n \geq 1$ for the moment and write $h_n=h$.

\medskip\noindent
Define a finite measure, $\tQ$, on $\F^0$ by $\tQ(F) =Q_\nu (F h; \ \Lambda, \ \Omega(b))$ for $F \in p \F^0$.  If $q>0$ is rational and $f \in \D$ where $\D$ is defined above (2.9), then
\begin{align*}
\int_0^\infty e^{-qt} \tQ [f \circ Y_t] dt
&= \ll_{k \to \infty} 
\int_{1/k}^\infty e^{-qt} \tQ [ f \circ Y_{t-1/k} \circ \theta_{1/k} ]dt\\
&= \ll_{k\to\infty} e^{q/k} \tQ [U^q f \circ Y_{1/k}]\\
&= \tQ [U^q f \circ Y_{0+}] 
= \int_0^\infty e^{-qt} \tQ [P_t f \circ Y_{0+}] dt
\end{align*}
where the second equality follows from the Markov property of $(Y_t, \ t>0)$ under $Q_\nu$ and the third equality from the definition of $W(b)$.  Then by continuity the extreme terms in this last display are  equal for all $q>0$.  
Both $t \to \tQ [f \circ Y_t]$ and $t \to \tQ [P_t f \circ Y_{0+}]$ are right continuous in $t$ and so the uniqueness of Laplace transforms implies that $\tQ [f \circ Y_t] =\tQ [P_t f \circ Y_{0+}]$ for $t>0$ and $f \in \D$.  A monotone class argument using the definition of $\D$ and the finiteness of $\tQ$ yields that this last equality holds, in fact, for all $f \in b \E$.  Now recall the definition of $\tQ$ and that $h=h_n$.  Then letting $n \to \infty$ we obtain
\begin{equation}\tag{2.22}
Q_\nu[ f \circ Y_t; \ \Lambda \cap \Omega(b)]
= Q_\nu [P_t f \circ Y_{0+}; \ \Lambda \cap \Omega(b)].
\end{equation}

\medskip
If $t>0$, the conclusion of (2.21) is an immediate consequence of the Markov property of $(Y_t; \ t >0)$ under $Q_\nu$.  If $t=0$, it suffices to verify (2.21) for $F$ of the form $F= \Pi_{j=0}^k f_j \circ X_{tj}$ with $f_j \in p \E, \ 0 \leq j \leq k$ and $0= t_0< t_1< \dots< t_k$.  
But then $F$ takes the form $F=f_0 \circ Y_{0+} G \circ \theta_{t_1}$ on $\Omega(b)$ where $G= \Pi_{j=1}^k f_j \circ X_{t_j-t_1}$.  
Define $\varphi(x)= P^x(G)$.  Then using the Markov property of $(Y_t; t>0)$ under $Q_\nu$ for second quality below,

\begin{align*}
Q_\nu [F \circ \theta_0; \ \Lambda \cap \Omega(b)]
&= Q_\nu [f_0 \circ Y_{0+} G \circ \theta_{t_1}; \ 
\Lambda \cap \Omega(b)]\\ 
&= Q_\nu [f_0 \circ Y_{0+} P^{Y(t_1)} (G); \ \Lambda \cap \Omega(b)]\\
&=Q_\nu [f_0 \circ Y_{0+} P_{t_1} \varphi (Y_{0+}); \ 
\Lambda \cap \Omega(b)]\\
&=Q_\nu [P^{Y^\ast(0)}(F); \ \Lambda \cap \Omega(b)]
\end{align*}
where the third equality comes from (2.21) and the last equality holds because of the Markov property of $(X_t; \ t \geq 0)$ under $P^x$ and $P^x(X_0=x)=1$ for $x \in E$.  This completes the proof of (2.21).
\end{proof}

\medskip
Returning to the proof of Theorem 2.13, it remains to verify (2.19) for $m \in Pur$.  Let $\nu =(\nu_t; t>0)$ be the entrance law corresponding to $m$ so that $Q_m = \int \sigma_t (Q_\nu) dt$.  Then the left hand side of (2.19) equals
\[
Q_m [F \circ \theta_\alpha; \ T=\alpha, \ W(b)] 
= \int Q_\nu [F \circ \theta_\alpha; \ T \circ \sigma_t = \alpha-t ; \
\Omega(b)] dt
\]
since $\theta_\alpha \circ \sigma_t = \theta_\alpha, \ \sigma_t^{-1} W(b)=W(b)$ and $Q_\nu$ is carried by $\{ \alpha=0 \}$.  But $T_t := T \circ \sigma_t + t$ is easily seen to be a $(\G_{t+}^0)$ time for each $t$.  Hence using (2.21) with $t=0$, the left hand side of (2.19) becomes since $Q_\nu$ is carried by $\{ \alpha =0\}$ and $\{ T_t=0 \} \in \G_{0+}^0$,
\[
\int Q_\nu [F \circ \theta_0; \ T_t =0, \ \Omega(b)] dt
= \int Q_\nu [P^{Y^\ast (0)}(F); \ T_t=0, \ \Omega(b)]dt.
\]
A similar argument shows that the right hand side of (2.19) reduces to the same expression completing the proof of Theorem 2.13.

\newpage
\noindent
{\bf 3. Some General Theory}

\medskip
In this section we shall establish some general theory for the system $(W,\ \G^0, \ \G_t^0, \ Y_t, \ Q_m)$ with $m \in Exc$ that we shall need in later sections.  
Most of these results come from section 3 of [F87].  
However we use the process $Y^\ast$ defined in (2.10) systematically rather the process $\oY$ used in [F87].  
Define $\G_{\geq t}^0$, resp. $\G_{>t}^0$, to be the $\sigma$-algebra generated by $\{Y_s; s \geq t\}$, resp $\{Y_s; s>t\}$.  
Recall that $\G^m$ is the completion of $\G^0$ and that $\G_t^m$ is the $\sigma$-algebra generated by $\G_t^0$ and the $Q_m$ null sets in $\G^m$.  
Similarly $\G_{\geq t}^m$ (resp. $\G_{>t}^m$) is the $\sigma$-algebra generated by $\G_{\geq t}^0$ (resp. $\G_{\geq t}^0$) and the $Q_m$ null sets.
The basic reference for general theory is [DM].  In [DM] the results of interest deal with processes with parameter set $[0,\infty[$.  
But since these properties depend only on the order structure of the parameter set they are valid also when the parameter set is $[ -\infty,+\infty [$ or $\R=]-\infty, + \infty[$ and will be used in these cases without special mention. 
The following notation will be used systematically henceforth.  The Borel $\sigma$-algebra of a topological space $E$ is denoted by $\B(E)$.  
In particular $\B:= \B (\R)$.  
Let $\M^m$ denote the $\sigma$-algebra on $\R \times W$ generated by processes which differ from a $\B \times \G^0$ measurable process by a $Q_m$-evanescent process.  
Recall that a process $Z= (Z_t (w))$ is $Q_m$-evanescent provided there exists $N \in \G^m$ with $Q_m(N)=0$ such that $Z_t (w)=0$ for all $t \in \R$ and $w \not \in N$.  
Let $\mathcal{I}^m \subset \M^m$ be the ideal of $Q_m$-evanescent processes.  In situations where $m \in Exc$ is fixed we often shall omit the $m$ and just write $\M$ and $\mathcal{I}$.

\medskip
The {\it optional} $\sigma$-algebra, $\O^0$, and the {\it copredictable} $\sigma$-algebra $\wP^0$ on $\R \times W$ are defined as follows:
\begin{equation}\tag{3.1}
\O^0 := \sigma \{ Z \in \B \times \G^0: \ Z_t \in \G_{t+}^0, \ 
t \to Z_t \text{ is right continuous and }Z=0 \text{ on } 
\rrbracket -\infty,\alpha\llbracket \},
\end{equation}
\begin{equation}\tag{3.2}
\wP^0 := \sigma \{ Z \in \B \times \G^0: \ Z_t \in \G_{> t}^0, \ 
t \to Z_t \text{ is right continuous and }Z=0 \text{ on } 
\llbracket \beta,\infty\llbracket \}.
\end{equation}
Note that $\wP^0$ is unchanged if $\G_{> t}^0$ is replaced by $\G_{\geq t}^0$ in (3.2).  
Similarly $\O^m$ (resp. $\wP^m$) is defined by replacing $\G^0$ by $\G^m, \ \G_{t+}^0$ (resp. $\G_{> t}^0$) by $\G_t^m$ (resp. $\G_{> t}^m$) and right continuous by $Q_m$ almost surely right continuous in (3.1) (resp. (3.2).  
Recall that $(\G_t^m)$ is right continuous.  It will be convenient to abbreviate right continuous by rc in the sequel. 
A random variable $T:W \to [-\infty, \infty]$ is a {\it costopping time} (resp. $m$-{\it costopping time}) provided $\{ T \geq t \} \in \G_{\geq t}^0$ (resp. $\G_{\geq t}^m$) for each $t \in \R$.  
A random variable $T: W \to [-\infty,\infty]$ is an optional (resp. copredictable) time provided $\llbracket T, \infty \llbracket \in \O^0$ (resp. $\rrbracket -\infty, T \rrbracket \in \wP^0$).
Replacing $\O^0$  (resp. $\wP^0$) by $\O^m$ (resp. $\wP^m$) defines a $Q_m$-optional (resp. $Q_m$-copredictable) time.  
Note $T$ is optional if and only if it is a stopping time for the filtration $(\G_{t+}^0)$ and $T \geq \alpha$.
Clearly a copredictable time is a costopping time but not conversely.  Associated with a costopping time $T$ are two $\sigma$-algebras analogous to $\G_S^0$ and $\G_{S-}^0$ for stopping times $S$. 
The $\sigma$-algebra $\G_{\geq T}^0$ of events after $T$ and $\G_{> T}^0$ of events strictly after $T$ are defined as follows: $\G_{\geq T}^0$ consists of those $\Lambda \in \G^0$ such that $\Lambda \cap \{ T \geq t \} \in \G_{\geq t}^0$ for all $t \in \R$ and $\G_{> T}^0$ is generated by sets of the form $\Lambda \cap \{ T<t\}$ with $\Lambda \in \G_{\geq t}^0$.  
In addition $\G_{> T-}^0 := \{ \Lambda \in \G^0: \ \Lambda \cap \{ T>t \} \in \G_{\geq t}^0$ for $t \in \R\}$ is the analog of $\G_{S+}^0$, $S$ a stopping time.  
If $T$ is a $Q_m$-costopping time $\G_{>T-}^m, \ \G_{\geq T}^m$ and $\G_{>T}^m$ are defined in the obvious manner.
Of course, stopping times or predictable times are relative to the filtration $(\G_t^0)$ while $Q_m$-stopping or $Q_m$-predictable times are relative to $(\G_t^m)$.  It is not hard to check that
$\alpha$ is a co-stopping time and optional, while $\beta$ is a stopping time and 
$\{ \beta \ge t \} \in \G_{>t-}^0 := \underset{s < t} \cap \G_{\ge s}^0$.
On the other hand, in general, neither $\alpha$ nor $\beta$ is copredictable (or predictable).
From the discussion following (2.10), $Y^\ast \in \O^0$ and $\Lambda^\ast \in \O^0$.

\medskip
Of course co-predictable means predictable ``with time reversed".  
In order to make this precise define $\ww(t):= w(-t)$ for $w \in W$ and $\wW := \{ \ww: w \in W\}$.  
Define, for $t \in \mathbf{R}$, the coordinate maps $\widehat {Y}_t (\ww) := \ww(t)$ and the $\sigma$-algebras $\wG^0:= \sigma \{ \widehat{Y}_t; \ t \in \mathbf{R}\}, \ \wG_t^0 := \sigma \{ \widehat{Y}_s; \ s \leq t\}$.  
Define the {\it reversal operator} $r: W \to \wW$ by $rw(t):= \ww(t)$.  Then $r^{-1} :\wW \to W$ is given by $r^{-1} \ww(t) =\ww (-t)$. 
Observe that $r$ is an isomorphism between $(W, \G^0)$ and $(\wW, \widehat{\G}^0)$ with the property that $r(\G_{\geq t}^0) = \wG_{-t}^0$ and $r^{-1} \wG_t^0 = \G_{\geq -t}^0$.
Obviously $(\wG_t^0)$ is an {\it increasing} filtration on $(\wW, \wG^0)$.  
The following lemma will enable us to translate standard results about predictable processes to the corresponding result about copredictable processes.

\medskip\noindent
{\bf (3.3) Lemma}
\begin{inparaenum}[(a)]
\item 
{\it 
If $Z$ is copredictable, then $\wZ_t:= Z_{-t} \circ r^{-1}$ is $(\wG_t^0)$ predictable on $\wW$.  
Conversely if $\wZ$ is any $(\wG_t^0)$ predictable process, then $Z_t := \wZ_{-t} \circ r \cdot 1_{ ]-\infty, \beta[ }(t)$ is copredictable.  
Here the reversal operator $r$ and the filtered space $(\wW, \wG_t^0)$ are defined above.
}

\item
{\it  
If $T$ is a costopping, resp. corepredictable time, then $\wT := -T \circ r^{-1}$ is a $(\wG_t^0)$ stopping, resp. predictable, time.  
Moreover $\wG_{\wT}^0 =r \G_{\geq T}^0$ and $\wG_{\wT-}^0 = r (\wG_{>T}^0)$.  
Conversely if $\wT$ is a $(\wG_t)$ stopping, resp. predictable, time, then $T:=-\wT \circ r$ is a costopping, resp. copredictable, time, and $\G_{\geq T}^0 = r^{-1} \wG_{\wT}^0$ and $\G_{>T}^0 = r^{-1} \wG_{\wT-}^0$.
}
\end{inparaenum}

\begin{proof}
(a) If $Z \in \wP^0$ is right continuous, then $\wZ$ is left continuous and adapted to $\wG_t^0= r \wG_{\geq t}^0$; hence $\wZ$ is $(\wG_t^0)$ predictable.
Such $Z$ generate $\wP^0$ establishing the first assertion in (a).  The second follows similarly since left continuous processes adapted to $(\wG_t^0)$ generate the class of  $(\wG_t^0)$ predictable processes and $1_{\rrbracket -\infty,\beta \llbracket}$ is copredictable.

(b) $T$ is copredictable if and only if $Z:= 1_{]-\infty, T]} \in \wP^0$.  But then $\wZ =1_{[\wT, \infty[}$ and so (a) implies that $\wT$ is $(\wG_t^0)$ predictable.  
Let $\Lambda \cap \{ T<t \}$ with $\Lambda \in \G_{\geq t}^0$ be a generator of $\G_{> T}^0$.  Then
\[
r (\Lambda \cap \{T<t \}) = r \Lambda \cap \{ T \circ r^{-1} <t \}
= r \Lambda \cap \{\wT >-t\}
\]
and since $r \Lambda \in \wG_{-t}^0$, this is a generator of $\wG_{\wT-}^0$.  Thus $r \G_{>T}^0 \subset \wG_{T-}^0$ and reversing the argument we obtain the desired equality.  The costopping case is simpler and left to the reader.  The converse is proved similarly.
\end{proof}

\medskip\noindent
{\bf Remark}
Since 
$\G_{\geq t}^0
= \sigma \{Y_s : s \geq t\} 
= \sigma \{ Y_{-s}; \ -s \leq -t \} = \wG_{-t}^0, \
\underset{s<t} \cap \G_{\geq s}^0 
= \wG_{(-t)-}^0$.
Therefore the right continuity of $(\G_t^m)$ implies that $\wG_{\geq t}^m$ is left continuous; that is $\wG_{\geq t}^m = \underset{s < t} \cap \wG_{\geq s}^m$.

\medskip
Using (3.3) to translate known results in the predictable case leads to the next proposition.

\medskip\noindent
{\bf (3.4) Proposition}
\begin{inparaenum}[(i)]
\item
{\it
If $Z \in \wP^m$, then $Z$ is $Q_m$-indistinguishable from a process in $\wP^0$.
}

\item
{\it 
If $T$ is a $Q_m$-copredictable time, then there exists a copredictable time $S$ such that $Q_m(S\neq T)=0$.  Further for each $\Lambda \in \G_{>T}^m$ there exists $\Lambda' \in \G_{>S}^0$ with $Q_m (\Lambda \Delta \Lambda')=0$.
}
\end{inparaenum}

\begin{proof}
The predictable version of (i) is lemma 7 on page 413 of Appendix 1 in [DM 1980].  Moreover (ii) is just [DM IV; 78] in the predictable case.  Note that since $(\G_t^m)$ is right continuous it is the ``usual augmentation" of $(\G_t^0)$ as defined in [DM IV; 48].
\end{proof}

\medskip\noindent
Because of (3.4) many assertions dealing with $\wP^m$ or $Q_m$-copredictable times will follow from the corresponding assertions about $\wP^0$ or copredictable times.

\medskip\noindent
{\bf (3.5) Proposition}
\begin{compactenum}[(i)]
\item
{\it 
If $Z$ in $\wP^0$, then $Z_t= Z_t \circ b_t$ for all $t \in \R$.
}

\item
{\it 
If $F \in \G_{> \alpha}^0$, then $t \to F \circ b_t$ is copredictable.
}
\end{compactenum}

\begin{proof}
Point (i) is clear since $b_t^{-1} \G_{> t}^0 = \G_{> t}^0$.  
For point (ii) note that $\G_{> \alpha}^0$ is generated by $F$ of the form
$F= \prod\limits_{j=1}^n f_j \circ Y_{s_j} 1_{\{ \alpha <s\}}$
where $s< s_1< \cdots<s_n$ and $f_j \in p \E, \ 1 \leq j \leq n$.  
But for such $F, \ F \circ b_t =F 1_{]-\infty, s[}(t)$ and so $t \to F \circ b_t$ is copredictable.
\end{proof}

\medskip\noindent
{\bf (3.6) Corollary}
$Y^\ast \in \wP^0$.

\begin{proof}
From the definition (2.10), it is evident that 
$Y_{\alpha +}^\ast \in \G_{>\alpha}^0$ and that 
$Y_t^\ast = Y_{\alpha +}^\ast \circ b_t$.
\end{proof}

\medskip\noindent
{\bf (3.7) Lemma}
{\it 
Let $T$ be a stopping (resp. costopping) time. 
Then $Q_m$ is $\sigma$-finite on $\G_{T-}^0 |_{\{ \alpha <T \}}$ (resp.
$\G_{>T}^0 |_{\{T < \beta\}}$).  Also $\{ \alpha <T \} \in \G_{T-}^0$ (resp. 
$\{ T < \beta \} \in \G_{>T}^0$).
}

\begin{proof}
Let $T$ be a costopping time.  Then both assertions are clear since
\[
\{ T<\beta\} 
= \underset{q \in \mathbb{Q}}\cup \underset{k} \cup
\{ T< q <\beta \} \cap \{ Y_q \in E_k \}
\]
where each $E_k \in \E$ with $m(E_k) <\infty$ and $E= \cup E_k$.  Recall that $\beta$ is a costopping time (actually copredictable).  The stopping time case is similar and well-known.
\end{proof}

\medskip
We are going to need the existence of copredictable projections.  Of course the existence of predictable (and optional) projections are standard facts on a filtered probability space [DM IV; 43].  But we shall need similar results for a $\sigma$-finite measure space in which the relevant $\sigma$-algebras do not form a filtration in the usual sense.  Therefore I have decided to include detailed proofs for the predictable case in enough generality for later needs.  Then using (3.3) it will be easy to translate these results from the predictable case to the copredictable case which is the main interest here.  The most complete reference for martingale theory relative to a $\sigma$-finite filtered space is the little book [H66].  Some results are also contained in [DM V; 39-43].

\medskip
The next lemma contains the key technical result that we shall need.  We begin by establishing the notation that will be used in its statement and proof.  Let $(\Omega, \F, Q)$ be a $\sigma$-finite measure space and let $(\F_t)_{t\in \R}$ be a filtration on $(\Omega, \F)$.  We suppose that $\F$ is $Q$-complete and that $(\F_t)$ is right continuous with 
$\N := \{ \Lambda \in \F; \ Q(\Lambda)=0 \} \subset \F_t$ for each $t \in \R$.  Thus $(\Omega, \F, \F_t, Q)$ satisfies the ``usual conditions" [DM IV; 48] except that $Q$ is not a probability and $\R$ is the index set.  We assume further that there exists a stopping time, $\gamma$, such that for each $t, \ Q$ is $\sigma$-finite on $\F_t |_{\{\gamma <t \}}$.  
Stopping times, predictable times etc. are relative to the filtration $(\F_t)$.

\medskip\noindent
{\bf (3.8) Lemma}
{\it 
Let $H \in b(\F)$ with $Q(\lvert H \rvert)<\infty$.  
Then under the above conditions there exists an adapted process, $Z$, which vanishes on $\rrbracket -\infty, \gamma \rrbracket$ and is right continuous with left limits $(rcll)$ on $\rrbracket \gamma,\infty \llbracket$ such that if $T$ is a stopping time, then $Q$ is $\sigma$-finite on $\F_{T-} |_{\{ \gamma< T<\infty \}}$ and $Z_T= Q(H \ \lvert \  \F_T  \rvert_{\{\gamma<T<\infty\}})$ on 
$\{\gamma<T<\infty\}$.  
Moreover if $S$ and $T$ are stopping times with $S \leq T, \ Z_S = Q(Z_T \ \lvert \ \F_S \rvert_{\{ \gamma <S<\infty\}})$ on $\{ \gamma <S<\infty\}$.  
If $T$ is predictable, $Z_{T-} = Q(H \ \lvert \ \F_{T-} \rvert _{\{\gamma<T<\infty \}})$ on $\{ \gamma < T< \infty\}$.
}

\begin{proof}
Since $\{ \gamma <r <T<\infty \} \in \F_{T-}$ for each $r, \ Q$ is $\sigma$-finite of $\F_r \mid_{\{\gamma<r\}}$ and $\{ \gamma<T<\infty \}$ is the union over all rational $r$ of $\{ \gamma <r<T<\infty\}$, it follows that $Q$ is $\sigma$-finite on $\F_{T-} \mid_{\{\gamma<T<\infty\}}$.  
In particular $Q$ is $\sigma$-finite on $\F_T \mid_{\{ \gamma<T<\infty\}}$.  Suppose $\G$ is a sub-$\sigma$-algebra of $\F, \ \Lambda \in \F$ and $Q$ is $\sigma$-finite on $\G\mid_\Lambda$.  
If $H$ is $Q$ integrable, then $Q(H \ \lvert \ \G \rvert_\Lambda)$ 
--- the ``conditional expectation'' of $H$ given $\G \rvert_\Lambda$ ---
exists on $\Lambda$.  
It will be convenient to write $G=Q (H \mid \G)$ on $\Lambda$ to mean $G= Q(H \ \lvert \ \G \rvert_\Lambda)$.  For each $t$, define
$H_t= Q (H \mid \F_t)$ on $\{ \gamma <t \}$.  
Now fix $a \in \R$ and set $\Omega_a = \{\gamma<a\}$.  
Then if $t \geq a, \ H_t$ is defined on $\Omega_a \subset \{\gamma<t\}$ and $H_t \mid_{\Omega_a}$ is $\F_t \mid_{\Omega_a}$ measurable.  Define $H_t^a = H_t \mid_{\Omega_a}, \ \F_t^a =\F_t \mid_{\Omega_a}, \ \F^a =\F \mid_{\Omega_a}, \ t \geq a$ so that $H_t^a =Q( H \mid \F_t^a)$ on $\Omega_a$ for $t\geq a$.  
Then $(\F_t^a)_{t \geq a}$ is a filtration on $(\Omega_a, \F^a)$ and $(H_t^a)_{t \geq a}$ is a martingale on $(\Omega_a, \F^a, \F_t^a, Q)$.  
We may suppose that $\lvert H_t^a \rvert \leq \sup \lvert H\rvert$.  
We don't bother to write $Q^a$ for $Q$.  Of course $Q$ is $\sigma$-finite on $\F_t^a$ since $\F_a \mid_{\Omega_a} \subset \F_t \mid_{\Omega_a}$ for $t \geq a$.  
Now using Theorem 5.4 of [H66] which guarantees limits along countable sequences in the $\sigma$-finite case, the standard arguments yield the existence of an $rcll$ on $[a,\infty[$ version $Z^a$ of $H^a$.  
If $T$ is a stopping (resp. predictable) time, then $T^a := T \mid_{\Omega_a}$ is a stopping (resp. predictable) time relative to $(\F_t^a)$.  
Theorem 5.1 in [H66] is the optional sampling theorem for supermartingales indexed by a countable subset of $\R$.  So if $S \leq T$ are stopping times, the standard technique of approximating $S^a$ and $T^a$ by their dyadic approximations from above and using (vii) and (viii) of \S 7 of [H66], we obtain on $\{T_a <\infty\}$ (see the proof of Theorem 6.1 in [H66] for the proof of the required uniform integrability of supermartingales relative to a decreasing family of $\sigma$-algebras):
\begin{equation}\tag{3.9}
\begin{cases}
(i) & Z_{T^a}^a = Q(H \mid \F_{T^a}^a), \\
(ii) & Z_{S^a}^a = Q (Z_{T^a}^a \mid \F_{S^a}^a ),\\
(iii) & Z_{T^a-}^a = Q(H \mid \F_{T^a-}^a) \text{ if } T \text{ predictable.}
\end{cases}
\end{equation}

\medskip
Next suppose $a,b \in \R$ with $a<b$.  If $t \geq b, \ \F_t^a =\F_t^b \mid_{\Omega_a} \subset \F_t^b$ as a subset since $\Omega_a \in \F_t^b$.  Consequently if $\Lambda \subset \F_t^a$,
\begin{equation}\tag{3.10}
\int_\Lambda H_t^a dQ =\int_\Lambda H dQ
=\int_\Lambda H_t^b dQ =\int_\Lambda H_t^b \mid_{\Omega_a} dQ.
\end{equation}
But $H_t^b \mid_{\Omega_a} \in \F_t^a$ and since (3.10) holds for each $\Lambda \in \F_t^a$, it follows that for each fixed $t \geq b, \ H_t^a =H_t^b \mid_{\Omega_a}$ a.s. $Q$.  
Now $Z^a$ is $rcll$ version of $H^a$ and so $Z_t^a = Z_t^b$ on $\Omega_a$ for all $t \geq b$ a.s. $Q$.  
Therefore a.s. $Q$ on $\{ \gamma <\infty\}, \ Z_t := \ll_{q \in \mathbb{Q}, q \to \infty} Z_t^q$ exists
for all $t > \gamma$ and is $rcll$, and for $a \in\R, \ Z_t= Z_t^a$ on $\Omega_a$ for $t \geq a$.  
Define $Z_t=0$ if $t \leq \gamma$ or if the conditions in the preceding sentence fail. 
Then $Z=(Z_t)$ is adapted, $rcll$ on $\rrbracket \gamma, \infty \llbracket$, vanishes on $\rrbracket - \infty, \gamma \rrbracket$ and $Z_t=Z_t^a$ on $\{ \gamma < a\}, \ t \geq a$.  
For each $t$ and $\Lambda \in \F_t^t =\F_t \mid_{\{ \gamma <t \}}$, (3.10) implies that $\int_\Lambda Z_t dQ =\int_\Lambda H dQ$.  
Hence $Z_t =Q(H \mid \F_t)$ on $\{ \gamma <t\}$.  Noting that $\F_{T^a}^a =\F_T \mid_{\Omega_a}$ and letting $a \to \infty$ through a sequence in (i) and (ii) of (3.9) establishes all except the last assertion in (3.8).

\medskip
Finally suppose $T$ is $(\F_t)$ predictable.  Let $(T_n)$ announce $T$.  Then from what has been established $Z_{T_n} = Q( H \mid \F_{T_n})$ on $\{ \gamma <T_n<\infty\}$.  
Now let $n \to \infty$ and use [H66; \S 7 (vii)] to obtain $Z_{T-} =Q(H \mid \F_{T-})$ on $\{ \gamma <T<\infty\}$ completing the proof of (3.8).
\end{proof}

\medskip\noindent
{\bf Remark}
As noted during the proof of (3.8), $Z_t =Q(H \ \lvert \ \F_t \rvert_{\{\gamma< t\}})$ on $\{ \gamma <t\}$ for fixed $t$.  
However $Z$ is {\it not} a martingale, at least in the usual sense, since $\{ \F_t \mid_{\{\gamma <t\}} \}$ is not a filtration on $\Omega$.  
Therefore some caution is necessary in applying the results in [H66].

\medskip
We are now prepared to formulate our basic predictable projection theorem.  However before coming to it we shall establish the required uniqueness theorem.  For the next two theorems the hypotheses and notation are those of (3.8).  As in (3.8) predictable means $(\F_t)$ predictable, and evanescent will mean $Q$-evanescent.

\medskip\noindent
{\bf (3.11) Theorem}
{\it
Let $Z^1$ and $Z^2$ be bounded predictable processes such that for each predictable time $T$,
\[
Q(Z_T^1; \ \gamma<T<\infty)
= Q(Z_T^2; \ \gamma<T<\infty).
\]
Then $\{ Z^1 \neq Z^2 \} \cap \rrbracket \gamma,\infty \llbracket$ is evanescent.
}

\medskip
\begin{proof}
Since $\rrbracket \gamma, \infty \llbracket$ is predictable we may suppose that $Z^i$ vanishes on $\rrbracket -\infty, \gamma \rrbracket, \ i=1,2$.  
In which case the conclusion of (3.11) becomes $\{Z^1 \neq Z^2 \}$ evanescent.  Since $Q$ is $\sigma$-finite there exists a probability, $P$, on $\F$ equivalent to $Q$.  
Clearly a set is evanescent if an only if it is $P$-evanescent.  By symmetry it suffices to show that $\{Z^1 <Z^2 \}$ is evanescent. 
If not, for some $\varepsilon>0, \ \{ Z^1 <Z^2 + \varepsilon \}$ is not evanescent.  
Therefore by the predictable section theorem for probability measures [DM IV; 85] there exists a predictable time $T$ with  $\llbracket T \rrbracket \subset \{Z^1 < Z^2 + \varepsilon \}$ and $P(\gamma < T< \infty) >0$ since $\{ Z^1< Z^2 + \varepsilon \} \subset \rrbracket \gamma, \infty \llbracket$.
But $Q \sim P$ and so $Q(\gamma <T<\infty)>0$.
Also $Q$ is $\sigma$-finite on $\F_{T-} \mid_{\{\gamma<T<\infty\}}$.
Consequently there exists $\Lambda \in \F_{T-}$ with $\Lambda \subset \{ \gamma < T <\infty \}$ and $0 <Q(\Lambda) <\infty$.  
Define $S =T$ on $\Lambda$ and $S=+\infty$ off $\Lambda$.  Then $S$ is predictable [DM IV; 73c] and 
\[
Q\{Z_S^1; \ \gamma< S < \infty \}
= Q(Z_T^1; \Lambda) 
< Q( Z_T^2; \Lambda)
=Q\{Z_S^2; \gamma<S<\infty\}
\]
contradicting the hypothesis.
\end{proof}

\medskip
We come now to the basic predictable projection theorem that we need.  Armed with (3.8) and (3.11) its proof is a direct extension of the proof in [DM VI; 43, 44].  
The hypotheses of (3.8) are still in force and the notation is that set forth above (3.8).

\medskip\noindent
{\bf (3.12) Theorem}
{\it 
Let $X \in p (\B \times \F)$.  Then there exists a predictable process, $^pX$, vanishing on $\rrbracket -\infty, \gamma \rrbracket$ such that each predictable time $T$
\[
Q(X_T; \gamma<T<\infty)= Q({^p X_T}; \gamma <T<\infty).
\]
Moreover $^pX$ is unique up to $Q$-evanescence.
}

\begin{proof}
The class of processes $X \in b(\B \times \F)$ for which $^pX$ exists is a vector space closed under increasing limits of uniformly bounded sequences.  Processes of the form $X_t(\omega)= 1_{]a,b]} (t) H(\omega)$ with $a,b \in \R$ and $H \in b\F$ generate $\B \times \F$ and are closed under products.  
Hence by the monotone class theorem 
(2.18)
it suffices to verify the existence for the above class of processes. 
In view of the vector space property it suffices to consider $H \in pb \F$.  But any such $H$ is the increasing limit of $Q$-integrable elements of $pb\F$ and so finally it suffices to suppose $H \in pb\F$ with $Q(H)<\infty$.  Let $Z$ be the process in (3.8) corresponding to $H$ and define 
\[
{^p X_t} (\omega) =1_{]a,b]}(t) Z_{t-} (\omega)
1_{]\gamma(\omega),\infty[} (t).
\]
Then $^pX$ is left continuous and adapted, hence predictable.  From the last assertion in (3.8) for each predictable time $T$ we find
\[
{^pX_T} =1_{]a, b]}(T) Q(H \mid \F_{T-}) \text{ on }
\{ \gamma <T<\infty\}.
\]
Integrating with respect to $Q$ establishes the existence assertion.  For the uniqueness first note that if $X$ is bounded by $k$, then $\{{^pX}>k \}$ is evanescent and the uniqueness of $^pX$ follows from (3.11).  In the general case applying this to $X \wedge n$ and then letting $n \to \infty$ we obtain the uniqueness assertion of the (3.12).
\end{proof}

\medskip\noindent
{\bf Remark}
Theorem 3.12 obviously extends to processes which differ from a $\B \times \F$ measurable process by a $Q$-evanescent process and we shall use it for such processes without special mention.

\medskip
We now turn our attention to the copredictable situation which is our main interest here.  Recall the dual filtration $(\wW, \wG^0, \wG_t^0)$ and reversal operator $r:W \to \wW$ defined earlier in this section.  
Define $\wQ_m=rQ_m$ and then $\wG^m$ and $\wG_t^m$ similarly to $\G^m$ and $\G_t^m$.  
Also set $\widehat{Y}_t (\ww)= \ww(t)$.  Note that $\widehat{\alpha} := \inf \{t: \widehat{Y}_t \in E\}= -\beta \circ r^{-1}$.  It will be convenient to set $\widehat{r} = r^{-1}: \widehat{W} \to W$.  Both $(W, \G^m, \G_t^m, Q_m)$ and $(\wW, \wG^m, \wG_t^m, \wQ_m)$ satisfy the hypotheses of (3.8), (3.11) and (3.12) with $\gamma =\alpha$ and $\gamma =\widehat{\alpha}$ respectively since $\wG_t^0 =r ( \G_{-t}^0)$.

\medskip
We first translate (3.11) to the copredictable case.

\medskip\noindent
{\bf (3.13) Theorem}
{\it
Let $Z^1, Z^2 \in b\wP^m$ and suppose that for each copredictable time $T$,
\begin{equation}\tag{3.14}
Q_m (Z_T^1; - \infty <T<\beta) =Q_m (Z_T^2; - \infty <T<\beta).
\end{equation}
Then $Z^1$ and $Z^2$ are $Q_m$-indistinguishable.
}

\begin{proof}
By (3.4) we may suppose that $Z^i \in b \wP^0, \ i=1,2$.  Then from (3.3), $\widehat{Z}_t^i := Z_{-t}^i \circ \widehat{r}$ is $(\wG_t^0)$ predictable for $i=1,2$ and vanishes on $\rrbracket -\infty, \widehat{\alpha} \rrbracket$.  If $\widehat{T}$ is a $(\wG_t^0)$ predictable time, $T:= -\widehat{T} \circ r$ is copredictable.  Hence for $i=1,2,$
\[
\widehat{Q}_m (\widehat{Z}_{\widehat{T}}^i; 
\widehat{\alpha} <\widehat{T}<\infty) 
= Q_m (Z_T^i; -\infty <T<\beta).
\]
It now follows from (3.11) that $\widehat{Z}^1$ and $\widehat{Z}^2$ are $\widehat{Q}_m$ indistinguishable which clearly is equivalent to the assertion in (3.13).
\end{proof}

\medskip
Finally we arrive at the copredictable projection theorem.

\medskip\noindent
{\bf (3.15) Theorem} 
{\it
Supose $Z \in p\M^m$.  Then there exists a copredictable process, $^{\widehat{p}}Z$, unique up to $Q_m$ evanescence such that for each $Q_m$ copredictable time $T$,
}
\begin{equation}\tag{3.16}
Q_m (Z_T; -\infty <T<\beta)
= Q_m ({^{\widehat{p}}Z_T} ; -\infty< T<\beta).
\end{equation}

\begin{proof}
It suffices to verify (3.16) for $Z \in p(\B \times \G^0)$ and copredictable $T$.  Then $\widehat{Z}_t :=Z_{-t} \circ \widehat{r} \in p (\B \times \G^0)$.
Define $^{\wp} Z_t:=( {^p \wZ_{-t}}) \circ r$.  From (3.3), $^{\wp}Z$ is copredictable.  Let $T$ be copredictable and $\wT:=-T \circ r$.  
Then since $\wT$ is a $(\wG_t^0)$ predictable time,
\begin{align*}
Q_m &(Z_T;-\infty<T<\beta)
=\wQ_m (\wZ_{\wT}; \widehat{\alpha}< \wT<\infty)\\
&=\wQ_m ({^p} \wZ_{\wT}; \ \widehat{\alpha} <\wT <\infty)
=Q_m ( {^{\widehat{p}}} Z_T; \ - \infty < T<\beta).
\end{align*}
The uniqueness of ${^{\wp}} Z$ comes from (3.13) or the uniqueness assertion in (3.12).
\end{proof}

\medskip\noindent
{\bf (3.17) Remark}
It follows from (3.3) and the corresponding result in the predictable case [DM IV; 73c],
that if $T$ is copredictable and 
$\Lambda \in \G_{>T}^0$, then 
$\wT := T$ on $\Lambda$ and
$\wT := -\infty$ off $\Lambda$ is copredictable.  Consequently a
standard argument shows that
(3.16) holds for all $Q_m$-copredictable $T$ if and only if
\[
{^{\wp}} Z_T= Q_m (Z_T \mid \G_{T-}^m) \text{ on }
\{-\infty <T<\beta\}
\]
for all such $T$.

\medskip\noindent
{\bf (3.18) Remark}
The technique use in the proof of (3.15) enables us to translate standard facts about predictable projections to the corresponding statements about copredictable projections.  In particular, if $Z \in p\M^m$ is $Q_m$ a.s. right continuous, then $^{\wp} Z$ is $Q_m$ a.s. right continuous [DM VI; 47].

\medskip
We shall also need an optional projection.  An optional projection may be constructed by an argument analogous to that used in the proof of Theorem 3.12.  But such an argument leads to a projection that is carried by $\rrbracket \alpha, \infty \llbracket$ which does suffice for our later needs.  
We shall need a projection that is defined at $\alpha$ at least on $W(b)$.  Following Fitzsimmons [F87] we shall construct an optional projection that is defined on $\Lambda^\ast$ which will be adequate for us.  It is clear from the definition (2.11) that 
$\rrbracket \alpha, \beta \llbracket \subset \Lambda^\ast \subset \llbracket \alpha, \beta \llbracket$.  Our method is a variant of Dawson's formula [$S$; 22:7].  If $\omega \in \Omega$ define $\omega^+ (t) =\omega(t+)$ for $t \geq 0$.  Then $\omega^+(t)=\omega(t)$ if $t>0, \ \omega^+(0)=\omega(0+)$ which is in $E$ by definition, and $X_t(\omega) =\omega^+(t)$ for $t \geq 0$.  Define a ``splicing" map from $W \times \R \times \Omega \to W$ by
\begin{equation}\tag{3.19}
\begin{split}
(w \lvert t \rvert \omega)(s) 
&:= w(s) \text{ if }s<t\\
&:= \omega^+(s-t) \text{ if } s \geq t.
\end{split}
\end{equation}
If $(t,w) \in \Lambda^\ast$, then $\theta_t w \in \Omega$ and $w= (w \lvert t \rvert \theta_t w)$.  For $Z \in p (\B \times \G^0)$ define
\begin{equation}\tag{3.20}
^0 Z_t (w):= 1_{\Lambda^\ast} (t,w) 
\int_{\Omega} Z_t (w \lvert t\rvert \omega)
P^{Y^\ast (t,w)} (d\omega).
\end{equation}

\medskip\noindent
{\bf (3.21) Theorem}
{\it Let $Z \in p \M^m$.  Then there exists a unique up to $Q_m$ evanescence process $^0 Z \in p(\O^m)$ such that for each $Q_m$ stopping time $T$,
\[
Q_m (Z_T; \ Y_T^\ast \in E) =Q_m ( ^0 Z_T; \ Y_T^\ast \in E).
\]
}

\begin{proof}
It suffices to prove this for $Z \in bp (\B \times \G^0)$ and stopping times $T$.  For such $Z$ and $T$ define $^0Z$ by (3.20) and
\[
H(w,\omega):= 1_{\Lambda^\ast} (T(w),w) Z_{T(w)}
(w \lvert T(w) \rvert \omega).
\]
We claim that $H \in bp (\G^0_{T+} \times \F^0)$.   It suffices to verify this for $Z$ of the form $Z_t (w) =g(t) G(w)$ with $g \in bp\B$ and $G=\prod\limits_{j=1}^n f_j \circ Y_{t_j}$ with $t_1 < \dots< t_n$ and $f_j \in bp \E$.  For $Z$ of this, form

\begin{equation}\tag{3.22}
H(w,\omega) = 1_{\Lambda^\ast} (T(w),w) g \circ T(w)
\prod\limits_{j=1}^{k(w)} f_j \circ Y_{t_j}(w)
\prod\limits_{j=k(w)+1}^n f_j \circ X_{t_j -T(w)} (\omega),
\end{equation}
where, setting $t_0=-\infty$ and $t_{n+1}=\infty$, for $w$ such that $(T(w),w) \in \Lambda^\ast$ or equivalently $Y_T^\ast (w) \in E$ and $k(w)$ is the unique value of $k$ with $t_k <T(w) \leq t_{k+1}, 0 \leq k \leq n$.  Of course the products in (3.22) corresponding to $k(w)=0$ or $=n$ do not appear.  
Because $\Lambda^\ast \in \O^0$, 
$H \in \G_{T+}^0 \times \F^0$ for $Z$ of the above form and, hence for all $Z \in bp(\B \times \G^0)$ by a monotone class argument.  Now suppose that $Z \in bp(\B \times \G^0)$ is such that
\[
1_E (Y^\ast_T (w)) Z_{T(w)} (w \lvert T(w) \rvert \omega)
= H(w,\omega)
=1_E (Y^\ast_T(w)) K(w) F(\omega)
\]
where $K \in pb \G^0_{T+}$ and $F \in pb \F^0$.  But $w=(w \lvert t \rvert \theta_t w)$ if $Y_t^\ast (w) \in E$ and so
\[
1_E (Y^\ast_T (w)) Z_T (w) = 1_E (Y^\ast_T (w)) K(w) F(\theta_T w).
\]
From (2.13), $Q_m$ is $\sigma$-finite on $\G_T^m \mid_{\{Y^\ast(T) \in E\}}$ and if $\Lambda \in \G_T^m \mid_{\{Y^\ast(T) \in E\}}$ with $Q_m(\Lambda) <\infty$, then for $Z$ of the above form one checks using (2.13) that $Q_m (Z_T ;  Y_T^\ast \in E, \Lambda)= Q_m (^0Z_T; Y_T^\ast \in E, \Lambda)$ where $^0Z$ is defined in (3.20).  
Since both sides of this last equality are finite measures in $Z$, the equality holds for all $Z \in b(\B \times \G^0)$ by another monotone class argument.  Letting $\Lambda \uparrow \{ Y^\ast_T \in E\}$ through a sequence of such sets we obtain (3.21) for $Z \in bp(\B \times \G^0)$.

\medskip
It remains to check $^0Z \in \O^m$ and to verify the claimed uniqueness.  Using (3.22) wih $T(w) \equiv t \in \R$,
\[
H(\cdot, \omega)= 1_E (Y_t^\ast)g(t) 
\sumlim_{j=0}^k f_j \circ Y_{t_j}
\sumlim_{j=k+1}^{n+1} f_j \circ X_{t_j-t} (\omega) 
1_{] t_k, t_{k+1}]} (t)
\]
for $Z$ of the form above (3.22).  Define
\[
h_k(t, x) =P^x \prod\limits_{j=k+1}^{n+1}
f_j \circ X_{t_j-t} 1_{] t_k; t_{k+1}]} (t).
\]
Clearly $h_k \in \B \times \E$ and so using (2.13) again
\[
^0Z_t =1_E (Y^\ast_t) g(t) 
\sumlim_{j=0}^k f_j \circ Y_{t_j} h_k (t, Y_t^\ast) 1_{]t_k, t_{k+1}]}(t)
\]
for $Z$ as above.  Consequently $^0Z \in \O^0$ for $Z$ as above, and, hence, by yet another monotone class argument for $Z \in bp (\B \times \G^0)$.  Using the fact that $Q_m$ is $\sigma$-finite on $\G_T^m \mid_{\{Y_T^\ast \in E\}}$,  the uniqueness follows from the optional section theorem as in [DM IV; 87b].
\end{proof}

\medskip
We conclude this section with several properties of processes which are both optional and copredictable.  We begin with a lemma.

\medskip\noindent
{\bf (3.23) Lemma}
{\it 
Let $(W, \G,Q)$ be a $\sigma$-finite measure space and $\H$ a sub-$\sigma$-algebra of $\G$.  Let $J \in pb \G$ and suppose that for $\Lambda \in \G$ with $Q(\Lambda) < \infty$,
\[
Q(J^2; \Lambda)
= Q(JQ(J1_\Lambda \mid \H); \Lambda).
\]
Then $J=Q(J1_\Lambda \mid \H )$ on $\Lambda$.
}

\medskip
\begin{proof}
Using the hypothesis for second equality below, the subtraction being valid since $Q(\Lambda)<\infty$,
\begin{align*}
0 
& \leq Q((J- Q(J1_\Lambda \mid \H))^2 ; \Lambda)\\
&= Q(J^2 -2JQ(J1_\Lambda \mid \H) 
+[Q(J1_\Lambda \mid \H)]^2; \Lambda)\\
&= Q(-J1_\Lambda Q(J1_\Lambda \mid \H)) + Q([Q(J1_\Lambda \mid \H)]^2; \Lambda)\\
&=-Q([Q(J1_\Lambda \mid \H)]^2 (1-1_\Lambda)) \leq 0,
\end{align*}
from which the desired conclusion follows.
\end{proof}

\medskip\noindent
{\bf (3.24) Proposition}
{\it 
Let $Z \in p \O^m$ be $Q_m$ a.s. right continuous.  Then $^{\wp}Z \in \O^m$.
}

\medskip
\begin{proof}
It suffices to consider $Z \in bp \O^0$.  Then from (3.18), $^{\wp} Z$ is $Q_m$ a.s. right continuous and vanishes on $\rrbracket - \infty, \alpha \llbracket$ since by definition $Z$ vanishes on $\rrbracket -\infty, \alpha \llbracket \in \wP^0$.  
Of course $^{\wp}Z=0$ on $\llbracket \beta, \infty \llbracket$ by definition.  
Therefore it suffices to show that $^{\wp}Z$ is adapted to $(\G_t^m)$.  
Fix $t$ and let $H \in pb \G^0_{>t}$.  
For convenience set $\G_t^t := \G_t^0 \mid_{\{ \alpha <t< \beta\}}$.  
Then from the Markov property (2.6), $Q_m (H\mid \G_t^t)$ is $\sigma(Y_t) \subset \G_{\geq t}^0$ measurable on $\{ \alpha<t<\beta\}$.  
Let $\Lambda \in \G_{>t}^0 \mid_{\{\alpha <t< \beta\}}$ with $Q_m(\Lambda) <\infty$.  
Then $1_\Lambda {^{\wp} Z_t} \in \G_{>t}^0$ and so $Q_m (1_\Lambda {^{\wp} Z_t} \mid \G_t^t) \in \G_{\geq t}^0$.  Since $Q_m (\alpha =t) =0, \ Z_t =0$ on $]-\infty, \alpha]$ and $^{\wp} Z_t =0$ off $] \alpha, \beta[$ a.s. $Q_m$,
$Q_m (1_\Lambda {^{\wp} Z_t} \mid \G^m) \in \G_{> t}^m$. 
Therefore we compute
\begin{align*}
Q_m ( {^{\wp} Z_t}{^{\wp} Z_t}; \Lambda)
&= Q_m (Z_t {^{\wp} Z_t}; \Lambda)\\
&= Q_m (Z_t Q_m (1_\Lambda {^{\wp} Z_t} \mid \G_t^m); t <\beta)\\
&= Q_m ( {^{\wp} Z_t} Q_m (1_\Lambda {^{\wp} Z_t} \mid \G_t^m))
\end{align*}
where the first equality follows from the definition of copredictable projection, the second because $Z_t \in \G_t^m$ and third because $Q_m (1_\Lambda {^{\wp} Z_t} \mid \G_t^m) \in \G_{> t}^m$.  
Now (3.23) implies that ${^{\wp} Z_t} =Q_m ({^{\wp} Z_t} \ \lvert \ \G_t^0 \rvert_{\{ \alpha <t< \beta\}})$ on $\Lambda$.  Let $\Lambda \uparrow \{ \alpha < t<\beta \}$ through a sequence of such sets to conclude that ${^{\wp} Z_t} \in \G_t^m$ since $^{\wp} Z_t$ vanishes off $\{ \alpha <t<\beta\} \in \G_t^0$ almost surely $Q_m$.
\end{proof}

\medskip
For the final result of this section we need a definition.  As in [F87] a process $Z \in \M^m$ is {\it homogeneous} provided that for each $s \in \R$ the processes $t \to Z_{s+t}$ and $t \to Z_t \circ \sigma_s$, or equivalently $t \to Z_t$ and $t \to Z_{t-s} \circ \sigma_s$, are $Q_m$ indistinguishable.  That is, for each $s \in \R$ there exists $N_s \in \G^m$ with $Q_m (N_s) =0$ such that for each $t \in \R$ and $w \not \in N_s, \ Z_{t-s} ( \sigma_s w) = Z_t (w)$.

\medskip\noindent
{\bf (3.25) Theorem}
{\it
Let $Z \in p(\O^m \cap \wP^m)$. 
(a) Then $Z$ is $Q_m$ indistinguishable from a process of the form $g(t, Y_t^\ast)$ where $g \in p( \B \times \E)$ with $g(t,\Delta)=0$ by convention.
(b) If, in addition, $Z$ is homogeneous then there exists an $f \in p\E$ such that $1_{\Lambda^\ast} Z$ and $f \circ Y^\ast$ are $Q_m$ indistinguishable.
}

\medskip
\begin{proof}
(a)  We may suppose that $Z \in p(\O^m \cap \wP^0)$.  By definition $Z$ vanishes off $\llbracket \alpha, \beta \llbracket$ and by (3.5), $Z_t (w) =Z_t (b_t w)$ identically in $(t,w)$.
Since $b_t =\sigma_{-t} \theta_t, \ Z_t = Z_t \circ b_t = (Z_t \circ \sigma_{-t}) \circ \theta_t$.  
But $Z_t \in \G_{>t}^0$ and so $H_t := Z_t \circ \sigma_{-t} \in \F^0 = \G_{>0}^0 \mid_{\Omega}$ and $Z_t =H_t \circ \theta_t$.
Define $g(t,x)=P^x(H_t)$ if $(t,x) \in \R \times E$ and $g(t,\Delta)=0$.  Then $g \in p(\B \times \E)$ and $(w,\omega) \to H_{T(w)} (\omega)$ is $\G_T^m \times \F^0$ measurable if $T$ is a $(\G_t^m)$ stopping time.  Therefore the standard extension of (2.13) using a monotone class argument and the fact that $Q_m$ is $\sigma$-finite on $\G_T^m \mid_{\{Y^\ast (T) \in E\}}$ we find
\begin{align*}
Q_m(Z_T; Y_T^\ast \in E) 
&= Q_m (H_T (\theta_T), Y_T^\ast \in E)\\
&= \int_{\{Y^\ast (T) \in E\}} \int_{\Omega} H_{T(w)} (\omega)
P^{Y_T^\ast(w)} (d\omega) Q_m (dw)\\
&= Q_m (g (T, Y_T^\ast); Y^\ast(T) \in E).
\end{align*}
Since $Z_t \in \O^m$ while $g(t, Y_t^\ast)$ and $1_E(Y_t^\ast)$ are in $\O^0$, point 
(a) follows from the optional section theorem [DM IV; 87b] and the fact that $Q_m$ is $\sigma$-finite on $\{Y_T^\ast \in E\}$.

\medskip\noindent
(b) By (a) we may suppose that $Z_t= g(t, Y_t^\ast)$ with $g \in p(\B\times\E)$ with $g(t, \Delta)=0$ since homogeneity is preserved when replacing $Z$ by a $Q_m$ indistinguishable process.  Define for each $s \in \R$,
\[
\Lambda_s := \{w: \text{ for all } t \in \R, \
g(t-s, Y_t^\ast (w))
= g(t, Y_t^\ast (w))\}
\]
Then $Q_m (\Lambda_s^c)=0$.  Also $\Lambda_s \in \G^0$ (this follows by a monotone class argument since it is clear for bounded continuous $g$).  By Fubini's theorem for $Q_m$ a.e. $w \in W$ there exists a Lebesgue null set $N_w$ such that for $s \not \in N_w, \ g (t-s, Y_t^\ast (w))= g(t, Y_t^\ast (w))$ for all $t$.  Let $f(x):= \int_0^1 g(u,x) du$.  Then $f \in p\E$ and $f(\Delta)=0$, and a.e. $Q_m$ for all $t$
\[
Z_t =g(t, Y_t^\ast) 
= \int_{t-1}^t g(t-s, Y_t^\ast) ds
= f(Y_t^\ast),
\]
establishing point (b).
\end{proof}

\newpage
\noindent
{\bf 4. The Moderate Markov Dual Process}

\medskip
As in previous sections $X$ is a fixed Borel right process and we fix $m \in Exc$.  
In this section we shall construct a process $\wX$ that is dual to $X$ with respect to $m$.  
In contrast to $X, \wX$ will be left continuous and have the moderate Markov property to be defined shortly.  
The laws of $\wX, (\widehat{P}^x; x \in E_{\Delta})$ will be uniquely determined up to an $m$-exceptional set of $x$.  
Our construction follows that Fitzsimmons [F87] which in turn is based on earlier work of Azema [A73] and Jeulin [J78].

\medskip
We begin with some notation.  Define 
\begin{equation}\tag{4.1}
\begin{split}
&(i) \quad \wO 
:= \{ \beta =0\} \cup \{ [\Delta] \} \subset W;\\
&(ii) \quad \wX_t (\wo) 
:= Y_{-t}^\ast (\wo), \ t>0, \wo \in \wO;\\
&(iii)\quad \widehat{\mathcal{F}}^0
:= \sigma \{ \wX_s ; s>0 \}, \
\widehat{\mathcal{F}}^0_t  := \sigma \{ \wX_s; 0<s\leq t\}, \ t>0;\\
&(iv) \quad \check{\theta}_t w(s) 
:= \begin{cases}
w(s+t), & s <0\\
\Delta & s \geq 0
\end{cases}
, \ t \in \R;\\
&(v) \quad \widehat{\theta}_t =\check{\theta}_{-t}, \ t \in \R.
\end{split}
\end{equation}
Note that $\check{\theta}_t \{ 
-\infty < t \leq \beta \} \subset \wO$, that $\widehat{X}_s \circ \widehat{\theta}_t= \widehat{X}_{s+t}$ for $s>0, \ t \geq 0$ and that $t \to \wX_t (\wo)$ is left continuous on $]0,-\alpha (\wo)[$ for $\wo \in \wO$ and is left continuous on $]0,\infty[$ for $\wo \in \wO \cap W(b)$.  
Note also that $\wX_s$ depends on $m$ through $Y^\ast$ and is not the same as the coordinate map $s \to \wo(-s)$ on $\wO$.
However for $t>0$, since $Y^\ast$ is adapred to $(\G^0_{t+})$,
\[
\widehat{\F}_t^0 
= \sigma(Y^\ast_{-s}; 0 < s \leq t) \mid_{\wO}
= \sigma (Y_s^\ast; -t \leq s <0) \mid_{\wO}
=\G_{\geq -t}^0 \mid_{\wO}
\]
and that $\widehat{\F}^0= \G^0 \mid_{\wO}$.
It is important that $\wX$ is $(\widehat{\F}_t^0)$ predictable.  
This holds because $Y^\ast$ is copredictable.  
Indeed $t \to Y_{-t}^\ast \circ r^{-1}$ is $(\wG_t^0)$ predictable by (3.3a).  
Therefore $t \to Y_{-t}^\ast$ is $(r^{-1} \wG_t^0) = \G_{\geq -t}^0$ predictable and $\wX$ is $\G_{\geq -t}^0 \mid_{\wO} =\widehat{\F}_t^0$ predictable.

\medskip\noindent
{\bf (4.2) Definition}
{\it 
A moderate Markov dual family $\{\widehat{P}^x; x \in E_{\Delta} \}$ for $X$ with respect to $m$ is a Borel measurable family of probability measures on $(\wO, \widehat{\F}^0)$ with $\widehat{P}^{\Delta} = \varepsilon_{[\Delta]}$ having the following two properties:
\begin{equation}\tag{4.3}
Q_m (F \circ \check{\theta}_T; \Lambda, -\infty <T<\beta)
= Q_m [\widehat{P}^{Y^\ast (T)} (F); \Lambda, -\infty <T<\beta]
\end{equation}
with $Q_m \ \sigma$-finite on $\G_{>T}^0 \mid_{\{-\infty <T<\beta\}}$ for all copredictable times $T, \ F \in p \widehat{\F}^0$ and $\Lambda \in \G_{>T}^0$;
\begin{equation}\tag{4.4}
\widehat{P}^x [F \circ \widehat{\theta}_T \mid \widehat{\F}_{T-}^0]
= \widehat{P}^{\wX(T)}(F) \text{ on } \{ 0<T<\infty\}
\end{equation}
for all $(\widehat{\F}_t^0)$ predictable times $T$ and $F \in p \widehat{\F}^0$.
}

\medskip
If $\{ \widehat{P}^x; x \in E_{\Delta} \}$ is a moderate Markov dual family for $X$ with respect to $m$, the family $\wX := \{ \wX_t; t >0\}$ under the laws $\{ \widehat{P}^x; x \in E_{\Delta}\}$ is called a left continuous moderate Markov dual processes for $X$ with respect to $m$.  
For a discussion of moderate Markov processes in general we refer the reader to [CW05] and [CG79].  Clearly (4.3) is equivalent to $Q_m (F \circ \check{\theta}_T \mid \G_{> T}^0) = \widehat{P}^{Y^\ast (T)} (F)$ on $\{-\infty < T<\beta\}$.

\medskip\noindent
Of course, (4.3) extends to $m$-copredictable times $T, \ \Lambda \in \G_{> T}^m$ and $F \in p \widehat{\F}^m$.

\medskip
Given a moderate Markov dual family $\{\widehat{P}^x; x \in E_{\Delta} \}$, there exists an associated dual semigroup $(\widehat{P}_t; t>0)$.  Define 
\begin{equation}\tag{4.5}
\widehat{P}_t f(x) := \widehat{P}^x (f \circ \wX_t), \
t>0, f \in p \E
\text{ or }
b\E.
\end{equation}
Taking $T=t$ and $F=f\circ \wX_s$ in (4.4) it is immediate that $(\widehat{P}_t; t>0)$ is a semigroup. 
Recall that $Q_m (Y_s \neq Y_s^\ast) =0$ for $s \in \R$.  
Therefore using (4.3) for the fourth equality, we have
\begin{align*}
m(g P_t f)
&= Q_m (g \circ Y_0 P_t f \circ Y_0) 
= Q_m(g \circ Y_0 f \circ Y_t)\\
&= Q_m (g \circ Y_{-t} f \circ Y_0)
= Q_m [\widehat{P}^{Y^\ast (0)} (g \circ \wX_t) f \circ Y_0]\\
&= m(\widehat{P}_t g \cdot f).
\end{align*}
That is $(P_t)$ and $(\widehat{P}_t)$ are in duality with respect to $m$.

\medskip
Here is Fitzsimmons' existence theorem for a moderate Markov dual family [F87].

\medskip\noindent
{\bf (4.6) Theorem}
{\it
There exists a moderate Markov dual family $\{ \widehat{P}^x; x \in E_{\Delta} \}$ for $X$ with respect to $m$.
Moreover $\{ \widehat{P}^x ; x \in E_{\Delta}\}$ is unique modulo Borel $m$-exceptional sets; that is, if $\{\widetilde{P}^x; x \in E_{\Delta}\}$ is another such family, then $\{ x: \widehat{P}^x \neq \widetilde{P}^x\}$ is a Borel $m$-exceptional set, i.e. an element of $\N(m) \cap \E$. 
}

Before coming to the proof of (4.6) we need to introduce a convenient metric on $\wO$ following the appendix of [F87].  Since $E$ is Lusin there exists a totally bounded metric $d$, say bounded by 1, on $E$ compatible with the topology of $E$ and this is extended to $E_\Delta$ by setting $d(x,\Delta)=2$ for $x \in E$ and $d(\Delta, \Delta)=0$.
Next, define a metric $\rho$ on $\wO= \{w: \beta(w) =0\} \cup \{[\Delta]\}$ by 
\[
\rho (w, w') =\int_{-\infty}^0 e^t d[w(t), w'(t)] dt.
\]
Note that $\wO$ consists of functions from $\mathbf{R}$ to $E_\Delta$ that are right continuous except at $\alpha <0$ and are constantly equal to $\Delta$ on $[0,\infty[$.  
Thus elements of $\wO$ may be thought of as functions on $]-\infty,0[$ and this is convenient at times.
Clearly $\rho$ is a metric on $\wO$ bounded by 2 and the topology induced by $\rho$ is the topology of convergence in measure relative to $\eta(dt) := e^t dt$ on $]-\infty,0[$.  
The next lemma contains the properties of the metric space $(\wO ,\rho)$ that we shall need.

\medskip\noindent
{\bf (4.7) Lemma}
{\it
\begin{compactenum}[(i)]
\item
$(\wO, \rho)$ is separable.

\item
If $\B(\wO)$ denotes the Borel $\sigma$-algebra associated with the metric space $(\wO, \rho)$, then $\B(\wO)= \widehat{\F}^0$.

\item
There exists a countable class $\C(\rho)$ of $\rho$-uniformly continuous functions from $\wO$ to $[0,1]$, closed under finite products, such that $\widehat{\F}^0 =\B (\wO)= \sigma[\C(\rho)]$.
\end{compactenum}
}

\begin{proof}
Property (i) is easily checked.  
For example let $D$ be a countable dense subset of $E_\Delta$ with $\Delta \in D$ and let $\D$ be the $D$-valued elements of $\wO$ that are constant on each dyadic interval $I_{n,k} := [(k-1) 2^{-n}, k 2^{-n}[, \ -n 2^n \leq k \leq 0, n \geq 1$.  
Then $\D$ is easily seen to be dense in $\wO$.  For (ii) let $\overline{E}_\Delta$ be the compact completion of $(E_\Delta, d)$, and fix $f \in pC(\overline{E}_\Delta)$.  
Then $f$ is bounded and $d$-uniformly continuous on $E_\Delta$.  For $w \in \wO$, set for $n \geq 1, t \in ] -\infty,0[,$
\[
\phi_{n,t,f} (w) 
:= ne^{-t} \int_t^{t +1/n} e^s f \circ w(s) \ ds.
\]
Note that $\phi_{n,t,f} (w) \to f \circ w(t)$ as $n \to \infty$ provided $t \neq \alpha(w)$. 
If $w_k \to w$ in $\wO$, then $f \circ w_k \to f \circ w$ in $\eta$ measure, and so $\phi_{n,t,f} (w_k) \to \phi_{n,t,f} (w) $ by the bounded convergence theorem.  
Therefore $\phi_{n,t,f}$ is continuous on $\wO$, and hence $\B(\wO)$ measurable.  
Since $\Delta$ is isolated in $\overline{E}_\Delta, \ f:=1$ on $\overline{E}_\Delta \setminus \{\Delta\}$ and $f(\Delta)=0$ is uniformly continuous on $\overline{E}_\Delta$.  
Then $\phi_t(w) := \int_{-\infty}^t e^s f \circ w(s) \ ds$ is continuous on $\wO$ for $t <0$.
But $\{ \alpha \geq t\}= \{ \phi_t =0 \}$ and so $\alpha \in \B(\wO)$.  
Combining these observations, $1_{\{\alpha \neq t\}} f \circ Y_t$ is $\B(\wO)$ measurable and since $1_{\{\alpha =t\}} f \circ Y_t =1_{\{\alpha=t\}} f(\Delta)$, it follows that $f \circ Y_t \mid_{\wO}$ is $\B(\wO)$ measurable for $t <0$ and $f \in C(\overline{E}_\Delta)$.  
Consequently $\widehat{\F}^0= \G^0 \mid_{\wO} \subset \B(\wO)$.

\medskip
For the opposite inclusion suppose that $G \subset \wO$ is open.
Then there exists an increasing sequence $(G_n)$ of open sets with $G_n \subset \overline{G}_n \subset G_{n+1} \subset G$ with $G_n \uparrow G$.
Let $f_n(w) := [n\rho (w, \overline{G}_n^c)] \wedge 1$ and note that $f_n \uparrow 1_G$.  
If $w' \in G$ is fixed, $w \mapsto \rho(w,w')$ is $\widehat{\F}^0$ measurable, and since $\overline{G}_n^c$ is open, if $\D$ is a countable dense subset of $\wO$, then
\[
w \mapsto \rho(w, \overline{G}_n^c) = \infty \{ \rho(w, w') : w' \in \D \cap \overline{G}_n^c\}
\] 
is $\widehat{\F}^0$ measurable.  Therefore $G \in \widehat{\F}^0$ and this establishes (ii).

\medskip
For (iii) let $(G_n)$ be a countable base for the topology of $\wO$.  For each $n$, there exists a sequence $(F_{n,k})$ of $\rho$-uniformly continuous functions such that $0 \leq F_{n,k} \uparrow 1_{G_n}$ as $k \to \infty$.  
Then the closure under finite products of $\{F_{n,k}; n \geq 1, k \geq 1\}$ has the required properties.
\end{proof}

\medskip\noindent
{\bf (4.8) Remark}
Let $\wO^-$denote the completion of $(\wO, \rho)$ and let $\overline{\rho}$ denote the extension of $\rho$ to $\wO^-$.  
Then $(\wO^-, \overline{\rho})$ is a complete separable metric space; in particular, $\wO^-$ with the $\overline{\rho}$ topology is a polish space.

\medskip
We prepare one more lemma for the proof of (4.6).  If $\mu$ is a measure on $(E, \E)$, then $B \in \E$ is $\mu$-polar if and only if $P^\mu (D_B <\infty)=0$ where $D_B := \infty \{ t  \geq 0: X_t \in B \}$, and this is obviously equivalent to $P^\mu (e^{-D_B}) =0$.  
Because $m \in Exc, \ B$ is $m$-polar if and only if $P^m (T_B <\infty) = P^m (e^{-T_B})=0$.  
Now let $\mu$ be a probability measure equivalent to $m$ and define 
\begin{equation}\tag{4.9}
I(B) =P^\mu (e^{-D_B}); \ B \in \E.
\end{equation}

\medskip\noindent
{\bf (4.10) Lemma}
{\it
Let $A$ be a Souslin subset of $E$.  Then $A$ is $m$-polar if and only if $I(B)=0$ for all Borel subsets $B \subset A$.  See [DM III, 16] for the definition of a Souslin set.
}

\begin{proof}
Let $\overline{E}_\Delta$ be a Ray compactification of $E_\Delta$ for $X$ and $\overline{\G} (\ \overline{\mathcal{K}}\ )$ denote the Ray open (compact) subsets of $\overline{E}_\Delta$.
Since $X$ is a Borel right process $\E =\E^r := \overline{\E}_\Delta \mid_E$ where $\overline{\E}_\Delta$ is the Borel $\sigma$-algebra of $\overline{E}_\Delta$.
Extend $I$ to $\overline{\E}_\Delta$ by $I(B) =P^\mu (e^{-D_B})$ for $B \in \overline{\E}_\Delta$.  
Since $\mu$ is carried by $E, \ D_B =D_{B \cap E}$ a.s. $P^\mu$ for $B \in \overline{\E}_\Delta$.
It is shown during the proof of (12.10) in [G75], that if $I^\ast(A) := \infty \{I(G) : G \in \overline{\G}, G \supset A\}$ for $A \subset \overline{E}_\Delta$, then $I^\ast$ restricted to $E$ is a Choquet capacity on $E$ since $E$ is contained in the non-branch points $E_\Delta$.
Hence if $A$ is a Souslin subset of $E, \ A$ is capacitable and since $\E =\E^r$ it follows that
\[
\sup \{ I(B); B \subset A, \ B \in \E\} 
= I^\ast (A) 
=\infty \{ I(B): B \supset A, \ B \in \E\}.
\]
It is immediate from this that $A$ is $m$-polar if and only if $I^\ast(A) =0$. 
\end{proof}

\medskip\noindent
{\bf (4.11) Corollary}
{\it
Let $A$ be a Souslin subset of $E$.  Then $A$ is $m$-inessential if and only if all Borel subsets $B \subset A$ are $m$-inessential.
}

\begin{proof}
Let $B \in \N(m) \cap \E$ and $B \subset A$.  
Then $B$ is $m$-polar and if $m =\xi + \rho U$ is the decomposition of $m$ into its harmonic and potential parts, $\rho (B)=0$.  
Therefore (4.10) implies that $A$ is $m$-polar, and since $A$ is capacitable $\rho (A) =0$.  
Hence $A \in \N(m)$.  
The reverse implication is obvious.
\end{proof}

\medskip
We turn now to the proof of Theorem 4.6 which is rather long. 
Let $\widehat{\F}^-$ denote the Borel $\sigma$-algebra of $\wO^-$ defined in (4.8).  
Since the topology of $\wO$ is the subspace topology it inherits from $\wO^-$, it is easy to see that $B(\wO) = \widehat{\F}^- \mid_{\wO}$. 
Then (4.7) implies that $\widehat{\F}^0 =\widehat{\F}^- \mid_{\wO}$. 
However we do not know if $\wO \in \widehat{\F}^-$ in general. 
The bounded convergence theorem shows that if $w \in W$ (resp. $\wo \in \wO$) then $t \to \check{\theta}_t w$ (resp. $t \to \widehat{\theta}_t \wo$) is $\rho$-continuous as a map from $\R$ (resp. $]0,\infty[$ ) to $\wO$ since $w$ being right continuous from $]\alpha (w), \infty [$ to the Lusin space $E_\Delta$ and constant on $]-\infty, \alpha (w)]$ implies that it has at most a countable number of discontinuities.  Let $\mathbf{C}$ denote the bounded uniformly continuous functions on $(\wO^-, \overline{\rho})$. 
If  $F \in p \mathbf{C}$, then on $W, \ t \to (F1_{\wO}) \circ \check{\theta}_t = F \circ \check{\theta_t}$ is continuous on $\R$.  Therefore the process $U_t^F := 1_{[ \alpha, \beta [}(t) (F \circ 1_{\wO} ) \circ \check{\theta}_t$ is right continuous, adapted to $(\G_{t}^0)$ and vanishes on $]-\infty, \alpha[$.  
Consequently $U^F \in \O^0$.  
Since $\check{\theta}_{t-s} \sigma_s = \check{\theta}_t, \ U^F$ is also homogeneous.  Then
\begin{equation}\tag{4.12}
Z^F := {^{\wp} (1_{\Lambda^\ast} U^F)} 
= 1_{\Lambda^\ast} {^{\wp} U^F}
\end{equation}
since $\Lambda^\ast =1_E \circ Y^\ast \in \wP^0$.  
Therefore (3.24) implies that $Z^F \in (\O \cap \wP^m)$.  
On the other hand using the uniqueness of copredictable projections,
${^{\wp} U_F}$ is homogeneous.  
Hence from (3.25), $Z^F$ is $Q_m$-indistinguishable from a process of the form $f^F \circ Y^\ast$ where $f^F \in pb \E$ and (2.14) implies that $f^F$ is unique modulo $\N_0 (m) := \N(m) \cap \E$.  
Of course, $f^F$ is extend to $\Delta$ by $f^F(\Delta)=0$.
The existence of $f^F$ for $F \in b \widehat{\F}^-$ with $Z^F$ being defined by (4.12) follows by a monotone class argument.  
It is evident that the map $F \to f^F$ is a pseudo-kernel from $E$ to $\wO^-$ relative to $\N_0 (m)$ as defined in [DM IX; II].
Since $\wO^-$ is polish [DM IX; II] implies the existence of a sub-Markovian kernel $L_0 =L_0 (x, dw)$ from $(E, \E)$ to $(\wO^-, \widehat{\F}^-)$ such that for each $F \in b\widehat{\F}^-, \ \{x: L_0 (x,F) \neq f^F (x) \} \in \N_0 (m)$.  
Combining this with (4.12), for $F \in pb \widehat{\F}^-$ and any $Q_m$-copredictable time $T$ one has
\begin{equation}\tag{4.13}
Q_m [(F 1_{\wO}) \circ \check{\theta}_T; Y_T^\ast \in E]
= Q_m [L_0 (Y_T^\ast, F); Y_T^\ast \in E]
\end{equation}
since $Y^\ast \in \wP^0$.  
Take $F=1_{\wO^-}$ in (4.13).
Then since 
$1_{\wO}(\check{\theta}_t \omega) = 1$
on
$\rrbracket \alpha, \beta \llbracket$  
it follows from the uniqueness of copredictable projections
that $\{x: L_0 (x, \wO^-) <1\} \in \N_0(m)$.

\medskip
We shall modify $L_0$ to obtain a kernel $L$ from $(E_\Delta, \E_\Delta)$ to $(\wO, \widehat{\F}^0)$ for which (4.3) holds with $\widehat{P} =L$.  
As a first step let $B := \{x ; L_0 (x, \wO^-) < 1\} \in \N_0(m)$ and define $L(x, \cdot) = L_0 (x, \cdot)$ if $x \in E \setminus B$ and $L(x, \cdot) =\varepsilon_\Delta$ if $x \in B \cup \{ \Delta \}$.  
Then $L$ is a Markov kernel from $(E_\Delta, \E_\Delta)$ to $(\wO^-, \widehat{\F}^-)$ for which (4.13) holds with $L_0$ replaced by $L$.  
Now $]0,\infty[$ and $]-\infty,0[$ are order isomorphic and so from (2.16), $\wO$ is co-Souslin.  
Hence [DM III; 18] implies that $\wO^- \setminus \wO$ is 
Souslin---recall $\wO^0$ is polish.
Next define 
\[
A:= \{x \in E: L(x, \wO^- \setminus \wO) >0\}.
\]
Let $B \subset A, \ B \in \E$ and take $F =1_{\wO^- \setminus \wO}$ in (4.13) to see that for copredictable times $T$,
\[
0=Q_m ((F \cdot 1_{\wO}) \circ \check{\theta}_T; \ Y_T^\ast \in B)
=Q_m (L(Y_T^\ast, \wO^- \setminus \wO); \ Y_T^\ast \in B).
\]
But $L (Y_T^\ast; \wO^- \setminus \wO)>0$ on $\{Y_T^\ast \in B\}$ and so $Q_m (Y_T^\ast \in B)=0$.  
Consequently from (3.13), $B \in \N_0(m)$ and so $A \in \N(m)$ by (4.11).
By definition there exists $B \in \N_0(m)$ with $A \subset B$.  
Modify $L$ by setting $L(x,\cdot) =\varepsilon_\Delta$ if $x \in B$.  
Denoting the modified kernel again by $L, \ L(x,\cdot)$ is carried by $\wO$ for all $x \in E_\Delta$.
Thus $L$ is a Markov kernel from $(E_\Delta, \E_\Delta)$ to $(\wO^-, \widehat{\F}^-)$ that is carried by $\wO$ and satisfies (4.13) with $L_0$ replaced by $L$.  
However 
\[
\{ -\infty < t< \beta \} \subset \{Y_t^\ast \in E \} \cup 
\{
Y_t^\ast = \Delta, t \leq \alpha \},
\]
and since $\check{\theta}_tw =[\Delta]$ if $t \leq \alpha$ and $Q_m$ doesn't change $\{[\Delta]\}$, we may replace $\{Y_T^\ast \in E\}$ by $\{-\infty<T< \beta\}$ in (4.13).
Finally given $\Lambda \in \G_{>T}^0$, define $T_\Lambda =T$ on $\Lambda$ and $T_\Lambda =-\infty$ on $\Lambda^c$.  
Then $T_\Lambda$ is copredictable (see [DM IV; 73c] in the predictable case and use (3.3b)) and so writing (4.13) for $T_{\Lambda}$ we obtain for $F \in p \widehat{\F}^0$,
\begin{equation}\tag{4.14}
Q_m (F \circ \check{\theta}_T; \ \Lambda, \ -\infty<T<\beta)
=Q_m (L(Y_T^\ast, F) ; \ \Lambda, \ -\infty<T<\beta).
\end{equation}
Therefore (4.3) holds with $\widehat{P}^x =L(x,\cdot)$ although $L$ is a kernel from $(E_\Delta, \E_\Delta)$ to $(\wO^-, \widehat{\F})$.  
But $\widehat{\F}^0 =\widehat{\F}^- \mid_{\wO}$ and $L$ is carried by $\wO$.  
Hence we may restrict $L$ to $\wO$ to obtain a kernel from $(E_\Delta, \E_\Delta)$ to $(\wO, \widehat{\F}^0)$ satisfying (4.3) which is denoted by $L$ again.

\medskip
The next step of the proof consists in showing that $L$ satisfies (4.4) in addition to (4.3).  Let $\P$ denote the $(\widehat{\F}_t^0)$ predictable processes on $\wO$.  
Given $Z \in b\P$ and $F \in b \widehat{\F}^0$ we claim that for $t>0$ the Borel set
\begin{equation}\tag{4.15}
\{ x: L(x, Z_t F \circ \widehat{\theta}_t) \neq L(x, Z_t L( \wX_t, F)\}
\end{equation}
is $m$-inessential.  
It suffices to check this for $Z$ and $F$ positive.
Since $Z \in \P, \ Z_t \in \widehat{\F}_{t-}^0 =\G_{>-t}^0 \mid_{\wO}$ for $t >0$.
If $T$ is copredictable and $H \in \G_{>-t}^0 \mid_{\wO}$  with $t>0$, then $H \circ \check{\theta}_T \in \G_{>(T-t)}^0$.  
Hence $Z_t \circ \check{\theta}_T \in \G_{>(T-t)}^0$.  
But $t>0$ and so $T-t$ is also copredictable.  
Therefore using (4.14) for the first, third and fifth equality below, we compute
\begin{align*}
Q_m
& (L(Y_T^\ast, \ Z_t F \circ \widehat{\theta}_t); \ -\infty<T<\beta)\\
& =Q_m (Z_t \circ \check{\theta}_T F \circ \widehat{\theta}_t \circ \check{\theta}_T; \ -\infty<T<\beta)\\
& =Q_m (Z_t \circ \check{\theta}_T F \circ \check{\theta}_{T-t}; \ -\infty<T<\beta)\\
&= Q_m (Z_t \circ \check{\theta}_T L(Y_{T-t}^\ast, F); \ -\infty<T<\beta)\\
&= Q_m (Z_t \circ \check{\theta}_T L(\wX_t, F) \circ \check{\theta}_T; \-\infty<T<\beta)\\
&= Q_m (L(Y_T^\ast, Z_t L(\wX_t, F)); -\infty<T<\beta).
\end{align*}
But $t \to L(Y_t^\ast; \ G)$ is copredictable for $G \in \widehat{\F}^0$ and so (3.13) implies that the set in (4.15) is $m$-inessential as claimed. 
Let $\C(\rho)$ be as in (4.7-iii).  
Since $\widehat{\F}^0 =\G^0 \mid_{\wO}$ we may extend $L$ to a kernel from $(E_\Delta, \E_\Delta)$ to $(W, \G^0)$ by setting $L(x, \Lambda) =L(x, \Lambda \cap \wO)$ for $\Lambda \in \G^0$.  
Thus $L$ may be regarded either as a kernel from $(E_\Delta, \E_\Delta)$ to $(\wO, \widehat{\F}^0)$ or to $(W, \G^0)$.  
Bearing this in mind fix $F \in \C(\rho)$ and define
\begin{equation}\tag{4.16}
\Gamma := \{ w \in W: t \to L(Y_t^\ast (w), F) 
\text{ is right continuous on } \R \}.
\end{equation}
It follows from [DM IV; 19] that $\Gamma^c =W \setminus \Gamma$ is a Souslin subset of the co-Souslin space $(W, \G^0)$.  
Hence by (2.15)
\begin{equation}\tag{4.17}
J:= \{ x \in E_\Delta: L(x, \Gamma^c) >0\}
\end{equation}
is Souslin.
Thus in view of (4.11) in order to show that $J$ is $m$-inessential it suffices to show that if $B \subset J, \ B \in \E$, then $B$ is $m$-inessential. 
But $1_B \circ Y^\ast \in \wP^0$ and so by (3.13) it suffices to show that $Q_m [1_B \circ Y_T^\ast; -\infty <T<\beta]=0$ for copredictable times $T$.  But this certainly will follow if we show that for any copredictable time $T$,
\begin{equation}\tag{4.18}
Q_m(L(Y_T^\ast, \Gamma^c); - \infty<T<\beta)=0.
\end{equation}
Since $F \in \C(\rho)$ the process $t \to 1_{]-\infty,\beta[}(t) F \circ \check{\theta}_t$ is right continuous and $t \to L(Y_t^\ast, F)$ is a version of its $Q_m$ copredictable  projection (3.15).
As a result $t \to L(Y_t^\ast, F)$  is $Q_m$ a.s. right continuous (3.18). 
Thus the left side of (4.18) equals
\begin{align*}
Q_m
&(1_{\Gamma^c} \circ \check{\theta}_T; -\infty <T<\beta)\\
&= Q_m (t \to L(Y_t^\ast , F) \text{ is not rc on } 
]-\infty, T[; \ T<\beta]\\
&=0,
\end{align*}
and so $J$ is $m$-inessential.

\medskip
Since $\P$ is separable [DM IV; 67.2] we may choose a countable set $\{Z^n; \ n \geq 1\}$ of positive, bounded, left continuous processes closed under finite products which generates $\P$.  
Let $\{ F^n, \ n \geq 1\}$ be an enumeration of $\C(\rho)$.  
Since $J$ is $m$-inessential, for each $n \geq 1$ there exists a set $N_n \in \N_0 (m)$ such that for each $x \not \in N_n, \ t \to L (\wX_t, F^n)$ is left continuous on $]0,\infty[$ a.s. $L(x,\cdot)$.  
Also by (4.15), for each $n \geq 1, \ k \geq 1$ and rational $t$ the set
\[
N_{n,k,t} := \{ x: L(x, Z_t^k F^n \circ \widehat{\theta}_t) 
\neq L(x, Z_t^k L(\wX_t, F^n) \}
\]
is in $\N_0(m)$.  Consequently
\[
N:= (\underset{n \geq 1} \cup N_n) \cup (\underset{n,k \geq 1, t >0 ,\text{ rational}} \cup N_{n,k,t})
\]
is a Borel $m$-inessential set.  If $x \in E \setminus N$, then
\begin{equation}\tag{4.19}
L(x, Z_t^k F^n \circ \widehat{\theta}_t)
= L(x, Z_t^k L(\wX_t, F^n))
\end{equation}
for $n,k \geq 1$ and all $t$ since both sides of (4.19) are left continuous by the discussion at the beginning of this paragraph.  
But $\{Z^k : k \geq 1\}$ generates $\wP$ and so for each $x \in E\setminus N, \ t \to L(\wX_t, F^n)$ is a version of the $L(x, \cdot)$ predictable projection of $L(x, F^n \circ \widehat{\theta}_t)$.  
Since 
$\{F^n; \ n \geq 1 \} = \C(\rho)$
is closed under finite products and
$\widehat{\F}^0 = \B (\wO)= \sigma( \C(\rho))$,
\begin{equation}\tag{4.20}
L(x, F \circ \widehat{\theta}_T; \ 0<T<\infty)
= L(x, L(\wX_T, F);\ 0<T<\infty),
\end{equation}
provided $x \in E \setminus N, \ F \in b\widehat{\F}^0$, and $T$ an $(\widehat{\F}^0_t)$ predictable time. 
Define $\widehat{P}^x =L(x, \cdot)$ if $x \in E \setminus N$ and  $\widehat{P}^x =\varepsilon_{[\Delta]}$ for $x \in N \cup \{[\Delta]\}$.  
Then $(\widehat{P}^x; \ x \in E_\Delta)$ is a moderate Markov dual family for $X$ with respect to $m$ .

\medskip
It remains to establish the uniqueness assertion in order to complete the proof of Theorem 4.6.  But this is easy.  
Extend $\widetilde{P}$ to a kernel from $(E_\Delta, \E_\Delta)$ to $(W, \G^0)$ by setting $\widetilde{P}^x(F) = \widetilde{P}^x (F 1_{\wO})$ for $F \in b\G^0$.
Recall that $\widehat{\F}^0 =\G^0 \mid_{\wO}$. 
If $D$ is a countable dense subset of $\mathbf{C}$, then
\[
\{ x: \widehat{P}^x \neq \widetilde{P}^x\} =
\underset{F \in D} \cup 
\{x: \widehat{P}^x (F) \neq \widetilde{P}^x (F) \}.
\]
Since both $\widehat{P}^x$ and $\widetilde{P}^x$ are moderate Markov dual families for $X$ it follows from (4.3) and (3.13) that $\{x: \widehat{P}^x (F) \neq \widetilde{P}^x (F) \} \in \N_0 (m)$ and hence so is $\{ x: \widehat{P}^x \neq \widetilde{P}^x \} \in \N_0 (m)$.
\qed

\medskip
Armed with the existence of a dual family $(\widehat{P}^x; \ x \in E_\Delta)$ we are now able to express the copredictable projection, ${^{\wp} Z}$, by a kernel as was done for the optional projection, ${^0 Z}$, in (3.21).  
To this end we define a ``splicing map" from $\wO \times \R \times \Omega \to W$ by
\begin{equation}\tag{4.21}
\begin{split}
(\wo \lvert t \rvert \omega)(s)
& =\wo(s-t) \text{ if } s <t\\
&=\omega^+(s-t) \text{ if } s \geq t.
\end{split}
\end{equation}
Recall that $\omega^+ (t) =\omega(t+), \ t \geq 0$.
Then the splicing map from $W \times \R \times \Omega \to W$ defined in (3.19) is given by $w \lvert t \rvert \omega = \check{\theta}_t w \lvert t \rvert \omega$ in the notation of (4.21).  
If $(t,w) \in \Lambda^\ast$, then $w = \check{\theta}_t w \lvert t \rvert \theta_t w$.  
Also in this notation (3.20) becomes
\begin{equation}\tag{4.22}
{^0 Z_t} (w) = 1_{\Lambda^\ast} (t,w) 
\int_{\Omega} Z_t (\check{\theta}_t w \lvert t \rvert \omega) P^{Y^\ast( t,w)} (d\omega),
\end{equation}
for $Z \in p (\B \times \G^0)$.

\medskip\noindent
{\bf (4.23) Proposition}
{\it
If $Z \in p ( \B \times \G^0)$, then a version of ${^{\wp}Z_t}$ is given by
\begin{equation}\tag{4.24}
(t,w) \to 1_{\Lambda^\ast} (t,w) 
\int_{\wO} Z_t (\wo \lvert t \rvert \theta_t w) \widehat{P}^{Y^\ast (t,w)} (d\wo).
\end{equation}
}

\begin{proof}
Using (4.3) in place of (2.13), the proof parallels that of (3.21).  
We omit translating the details.
\end{proof}

\medskip
We end this section with a version of the familiar commutation of the projections.

\medskip\noindent
{\bf (4.25) Theorem}
{\it
Let $Z \in p \M$.  Then $^{0 {\wp}} Z$ and $^{\wp 0} Z$ are $Q_m$ indistinguishable.
}

\begin{proof}
It suffices to prove this for $Z \in p (\B \times \G^0)$ since both the optional and copredictable projections of a $Q_m$-evanescent process are $Q_m$-evanescent.
We shall prove (4.25) by showing that a common version of both $^{0 {\wp}} Z$ and $^{\wp 0} Z$ is given by
\begin{equation}\tag{4.26}
\widetilde{Z} (t,w) =1_{\Lambda^\ast} (t,w) 
\int_{\wO} \int_\Omega Z_t (\wo \lvert t \rvert \omega) 
P^{Y^\ast (t,w)} (d \omega) \widehat{P}^{Y^\ast (t,w)} (d\wo).
\end{equation}
Fix $Z \in p(\B \times \G^0)$.  Then from (4.24)
\[
^{\wp 0} Z_t (w) = 1_{\Lambda^\ast}(t,w) 
\int_{\wO} {^0 Z_t} ( \wo \lvert t \rvert \theta_t w)
\widehat{P}^{Y^\ast (t,w)} (d\wo).
\]
Setting $\widetilde{w} =\wo \lvert t \rvert \theta_t w$ and using (4.22) this becomes 
\[
1_{\Lambda^\ast}(t,w) \int_{\wO} 1_{\Lambda^\ast}(t,\widetilde{w})
\int_\Omega Z_t (\check{\theta}_t \widetilde{w} \lvert t \rvert \omega)
P^{Y^\ast (t,\widetilde{w})} (d\omega) \widehat{P}^{Y^\ast (t,w)} (d\wo).
\]
Now $\widetilde{w}(s) =\wo (s-t)$ if $s<t$ and $\widetilde{w}(s) = (\theta_t w)^+ (s-t) =w(s+)$ if $s \geq t$.  
Hence $\check{\theta}_t \widetilde{w} (s) =\wo (s)$ if $s <t$ and so $\check{\theta}_t \widetilde{w} \lvert t \rvert \omega = \wo \lvert t \rvert \omega$.  
But $Y^\ast$ is copredictable which implies that $Y_t^\ast = Y_t^\ast \circ b_t$ and $b_t \widetilde{w} =b_t w$ since $(t,w) \in \Lambda^\ast$.  
Hence $Y_t^\ast( \widetilde{w})= Y_t^\ast(w)$.  
Combining these facts we see that $^{\wp 0} Z= \widetilde{Z}$.

\medskip
The computation for $^{\wp 0} Z$ is slightly different.  
Arguing just as above one obtains for each fixed $t$ and $w$
\[
^{0 \wp} Z_t (w) =1_{\Lambda^\ast}(t,w) 
\int_\Omega 1_{\Lambda^\ast} (t,\widetilde{w}) 
\int_{\wO} Z_t (\wo \lvert t \rvert \theta_t \widetilde{w})
\widehat{P}^{Y^\ast (t,\widetilde{w})} (d\wo) 
P^{Y^\ast(t,w)} (d\omega)
\]
where in this case $\widetilde{w} =(\check{\theta}_t w \lvert t \rvert \omega)$.  
Thus if $s \geq t, \ \widetilde{w}(s) =\omega^+(s-t)$ and so $\theta_t \widetilde{w} =\omega^+$.  
Hence $Y_t^\ast (\widetilde{w}) = \omega(0+) = X_0 (\omega)$. 
But for each $x \in E, \ P^x (X_0 =x) =1$ which implies that if $(t,w) \in \Lambda^\ast$, then a.e. $P^{Y^\ast (t,w)}$ in $\omega$ one has $Y^\ast(t,w) =X_0 (\omega)$.  
But $Y^\ast (t,\widetilde{w})= X_0 (\omega)$.
Combining these facts yields $^{0 \wp}Z =\widetilde{Z}$.
\end{proof}




\noindent
{\bf 5. Homogenous Random Measures}


\medskip
In this section some of the tools developed in preceding sections will be applied to prove Fitzsimmons' basic existence theorem for homogeneous random measures (abbreviated HRMs). We begin with a preliminary definition. As before $m \in Exc$ is fixed and $Q_m$ is the corresponding Kuznetsov measure. A {\it random kernel} is a {\it positive} kernel from $(W, \G^m)$ to $(\R, \B(\R)), (w,B) \to K(w, B)$ such that $K = \sum_{n \geq 1} K_n$ where each $K_n$ is such a kernel with $Q_m(K_n(\R) = \infty) =0$ for each $n \geq 1$. Two random kernels $K$ and $L$ are $Q_m$-indistinguishable provided $K(w, \cdot) = L(w, \cdot)$ for $Q_m$ a.e. $w$. Since $m$ is fixed we shall just write indistinguishable (resp. evanescent) for $Q_m$-indistinguishable (resp. $Q_m$-evanescent). Notice that a random kernel, $K$, is indistinguishable from $\sum_{n \geq 1} K_n$ where each $K_n$ is a subMarkovian kernel, i.e. satisfying $K_n(w, \R) \leq 1$ for all $w \in W, n \geq 1$. Indeed, define $a_{n,k}(w) = (K_n(w, \R) \wedge k) - (K_n(w, \R) \wedge (k-1))$ if $K_n(w, \R) < \infty$ and $a_{n,k}(w)=0$ if $K_n(w, \R) = \infty$ so that $a_{n,k}(w) \leq 1$ and $\sum_{k \geq 1} a_{n,k}(w) = K_n(w, \R)$ if $K_n(w, \R) < \infty$. Now set $$ K_{n,k}(w, \cdot) = a_{n,k}(w)K_n(w, \cdot) \big/ K_n(w, \R) $$ if $K_n(w,\R) < \infty$ and equal to zero otherwise. Relabeling the $K_{n,k}$ as a single sequence suffices. This countable finiteness condition justifies the use of Fubini's theorem in the sequel. Recall the definition of $\M^m$ in the first paragraph of Section 3. A random kernel $K$ is {\it carried} by a set $\Lambda \in \M^m$ provided $Q_m \int 1_{\Lambda^c}(t, \cdot) K(\cdot, dt) = 0.$ We shall often write just $K(dt)$ in place of $K(\cdot, dt)$ in such integrals. Since $m \in Exc$ is fixed in this section we shall write $\M$ for $\M^m$.

\medskip\noindent
{\bf (5.1) Definition}
{\it A random measure $\kappa$ is a random kernel which is carried by $\Lambda^*$.}

\medskip\noindent
Recall that $\Lambda^* = \{ (t,w) : Y_t^*(w) \in E \}$ and that $\rrbracket \alpha, \beta \llbracket \subset \Lambda^* \subset \llbracket \alpha, \beta \llbracket$. This definition differs slightly from that in [Fi87]. Fitzsimmons just assumes that $\kappa$ is carried by $\llbracket \alpha, \beta \llbracket$. But only random measures carried by $\Lambda^*$ will arise in what follows and so it is convenient to build it into the definition. A random measure (RM) $\kappa$ is {\it $\sigma$-integrable} over a $\sigma$-algebra $\H \subset \M$ provided there exists $Z \in p\H$ with $Q_m \int Z_t \kappa(dt) < \infty$ and $\kappa$ carried by $\{Z > 0\}$. Clearly one may suppose that $Z>0$ without loss of generality. This is equivalent to the {\it Doleans} measure, $M_\kappa$, of $\kappa$ defined by 
\begin{equation}\tag{5.2}
M_\kappa(Z) = Q_m \int Z_t \kappa(dt), Z \in p\M
\end{equation}
being $\sigma$-finite on $\H$. The class of random measures $\sigma$-integrable over $\H$ is denoted by $\sigma\I(\H)$. We define equality of RMs to mean indistinguishability. Since $\B$ is countably generated, if $\kappa_1$ and $\kappa_2$ are RMs in $\sigma \mathcal{I}(\mathcal{M})$, then $\kappa_1 = \kappa_2$ if and only if for each $B \in \B$, $\kappa_1(\cdot,B) = \kappa_2(\cdot,B)$ a.e. $Q_m$. If $\kappa$ is a RM and $Z \in p\M$, let $(Z * \kappa)(B) := \int_B
 Z_t \kappa(dt)$. Then $Z * \kappa$ is a RM provided it satisfies the appropriate finiteness condition, in particular if $Z$ is bounded.
 
 The following proposition is the first step in the constructions to follow. It is a standard result but we shall prove it for completeness.
 
 \medskip\noindent
 {\bf (5.3) Proposition}
 {\it Let $M$ be a $\sigma$-finite measure on $\M$ carried by $\Lambda^*$. Then $M$ is the Doleans measure $M_\kappa$ of a unique} RM, {\it $\kappa \in \sigma\I(\M)$ if and only if $M$ does not change evanescent sets.}
 \begin{proof}
 If $\kappa \in \sigma\I(\M)$ then it is clear from the definition (5.2) that $M_\kappa$ is $\sigma$-finite, carried by $\Lambda^*$ and does not change evanescent sets. For the converse suppose first that $M$ is finite. Let $\I \subset \M$ be the ideal of evanescent processes. Let $\varphi \in b\B$ and $f \in pb\G^o$. Define $M^\varphi(f) = M(\varphi \otimes f)$ where $\varphi \otimes f(t, w) = \varphi(t) f(w)$. If $f=0$ a.e. $Q_m$, then $\varphi \otimes f \in \I$ and so $M^\varphi(f)=0$. Therefore the finite signed measure $M^\varphi << Q_m$. If $\varphi \geq 0$ then $M^\varphi$ is a positive measure with $M^\varphi(1) \leq \sup \varphi \cdot M(1) < \infty$. Therefore there exists $f_1 \in \G^o$ with $0 \leq f_1 < \infty$ such that $M^1 = f_1 Q_m$. Clearly $M^\varphi << M^1$ for $\varphi \in b\B$, and so $$M^\varphi(f) = \int f(w) k(w, \varphi) M^1(dw)$$ where $k(\cdot, \varphi)$ is a version of $dM^\varphi / dM^1$. It is evident that $\varphi \to k(\cdot, \varphi)$ is a subMarkovian pseudo kernel from $(W, \G^o)$ to $(\R, \B)$ relative to $\N := \{ \Lambda \in \G^o : M^1(\Lambda) = 0 \}$ as defined in [DM VI; 11]. See also [G75b]. Consequently by [DM VI; 13] or [G75b; 4.5] there exists a subMarkovian kernel $K$ from $(W, \G^o)$ to $(\R, \B)$ such that $k(\cdot, \varphi) = K\varphi$ a.e. $Q_m$ for $\varphi \in b\B$. Finally define $\kappa(w, \varphi) = f_1(w) K(w, \varphi)$. Then 
 \begin{equation}\tag{5.4}
 \begin{split}
M(\varphi \otimes f) &= \int_W f(w) K(w, \varphi) M^1(dw)\\
&= \int_W f(w) K(w, \varphi) f_1(w) Q_m(dw)\\
&= \int_W \int_\R \varphi(t) f(w) \kappa(w, dt) Q_m(dw).
\end{split}
\end{equation}
Let $N_\kappa(F) := Q_m \int_\R F(t, \cdot) \kappa(\cdot, dt)$ for $F \in p(\B \times \G^o)$. Then since $K$ is subMarkovian, $$N_\kappa(1) \leq Q_m(f_1) = M^1(1)=M(1)<\infty,$$ that is, $Q_m[\kappa(\R)] < \infty$. Hence $\kappa$ is a random kernel. Since $M$ and $N_\kappa$ are finite measures and agree on $F \in p(\B \times \G^o)$ of the form $F = \varphi \otimes f, M=N_\kappa$ on $\B \times \G^o$ and hence on $\M$ since they both vanish on $\I$. Therefore $\kappa$ is carried by $\Lambda^*$ since $M$ is carried by $\Lambda^*$. Thus finally $\kappa$ is a RM and from (5.4), $M = M_\kappa$. 

If $M$ is $\sigma$-finite on $\M$, then $\R \times W = \cup_{n=1}^\infty \Lambda_n$ where each $\Lambda_n \in \M$ with $M(\Lambda_n) < \infty$ and the $\Lambda_n$ are disjoint. Set $M_n = 1_{\Lambda_n} M$. Then by what has been proved there exist RMs, $\kappa_n$, with $Q_m(\kappa_n(\R)) < \infty$, $\kappa_n$ carried by $\Lambda_n$, and $M_n = M_{\kappa_n}$. Thus $\kappa := \sum_{n=1}^\infty \kappa_n$ is a RM and since the $\Lambda_n$ are disjoint $M = M_\kappa$ and $\kappa \in \sigma\I(\M)$ because $M$ is $\sigma$-finite. It remains to prove the uniqueness. Suppose $\kappa_i \in \sigma\I(\M), i=1,2$. Then there exist $Z^i \in \M, Z^i > 0$ with $Q_m \int Z_t^i \kappa(dt) < \infty, i=1,2$. Then for $Z := Z^1 \wedge Z^2$ one has $Q_m \int Z_t \kappa_i(dt) < \infty$ for $i=1,2$ and $Z > 0$. Replacing $\kappa_i$ by $Z * \kappa_i, i=1,2$, it suffices to prove the uniqueness when $M_{\kappa_i}$ is finite, $i=1,2$. If $B \in \B$ and $f \in p\G^o$, then from (5.4), $Q_m[f \kappa_i(\cdot, B)]$ does not depend on $i$. It follows from this that $\kappa_1(\cdot, B) = \kappa_2(\cdot, B)$ a.e. $Q_m$ and as remarked earlier this implies that $\kappa_1 = \kappa_2$. This completes the proof of (5.3).
 \end{proof}
 
 \medskip\noindent
 {\bf Remark } The RM actually constructed in the proof of (5.3) is a kernel from $(W, \G^0)$ to $(\R, \B)$. In particular a RM in $\sigma(\mathcal{I}\mathcal{M})$ is indistinguishable from a RM which is also a kernel from $(W, \mathcal{G}^0)$ to $(\mathbb{R},\mathcal{B})$.
 
\medskip\noindent
{\bf (5.5) Definition}
{\it A random measure $\kappa$ is optional (resp. copredictable) provided 
$\kappa \in \sigma\I(\O^0)$ and 
$Q_m \int Z_t \kappa(dt) = Q_m \int {^0 Z_t} \kappa(dt)$ 
(resp. $\kappa \in \sigma\I({\wP}^0)$ and 
$Q_m \int Z_t \kappa(dt) = Q_m \int{^{\hat{p}} Z_t} \kappa(dt)$) for $Z \in p(\B \times \G^0)$.}

\medskip\noindent
Note that $\sigma\I(\O^0) = \sigma\I(\O^m), \sigma\I(\wP^0) = \sigma\I(\wP^m)$ and that the equalities in (5.5) actually hold for $Z \in p\M$. It is convenient to introduce the notation
\begin{equation}\tag{5.6}
\langle Z, \kappa \rangle := Q_m \int Z_t \kappa(dt) = M_\kappa(Z)
\end{equation}
when $\kappa$ is a random measure and $Z \in p\M$. Using this notation $\kappa \in \sigma\I(\wP^m)$ is copredictable provided $\langle Z, \kappa \rangle = \langle {^{\wp}Z}, \kappa \rangle$. Suppose that $\kappa \in \sigma \I (\wP^m)$. If $Z \in \I$ then ${^{\wp}Z} \in \I$ by the uniqueness of the coprojection. Consequently the measure $M(Z) := \langle {^{\wp}Z}, \kappa \rangle$ defined on $p\M$ satisfies the hypotheses of (5.3) and so there exists a unique RM, $\kappa^{\wp}$, with $\langle {^{\wp}{Z}}, \kappa \rangle = \langle Z, \kappa^{\wp} \rangle$ for $Z \in p\M$. But $^{\wp}(^{\wp}Z) = {^{\wp}Z}$ and so for $Z \in p\M$, 
$$\langle Z, \kappa^{\wp} \rangle = \langle {^{\wp}Z}, \kappa \rangle = \langle {^{\wp}({^{\wp}Z})}, \kappa \rangle = \langle {^{\wp}Z}, \kappa^{\wp} \rangle = \langle Z, (\kappa^{\wp})^{\wp} \rangle.$$
The equality of the first and fourth term in this display implies that $\kappa^{\wp}$ is copredictable and the equality of the first and last that $\kappa^{\wp} = (\kappa^{\wp})^{\wp}$. $\kappa^{\wp}$ is called the {\it dual copredictable projection} of $\kappa$. Similarly the {\it dual optional projection}. $\kappa^0$, is defined by $\langle Z, \kappa^0 \rangle = \langle {^{0}{Z}}, \kappa \rangle$ for $\kappa \in \sigma\I(\O^m)$. The fact that ${^0Z}$ is defined only on $\Lambda^*$ causes no difficulty since $\kappa$ is carried by $\Lambda^*$

It is convenient to define the ``big shifts", $\Sigma_t$, for $t \in \R$. Their action on processes and random measures is defined by \begin{equation}\tag{5.7}\begin{split}
&(i)\quad (\Sigma_t Z) (s, w) := Z(s-t, \sigma_t w), Z \in \M,\\
&(ii)\quad (\Sigma_t \kappa) (w, B) := \kappa(\sigma_t w, B-t), \kappa \hbox{ a RM.} 
\end{split}\end{equation}
It is clear that $\Sigma_t Z \in \M$ and that $\Sigma_t \kappa$ is a random measure. Note that $Z \in \M$ is homogeneous as defined above (3.25) if and only if $\Sigma_t Z = Z$ for $t \in \R$. Of course, equality in $\M$ is defined modulo $\I$ as it is for random measures. If $Z \in p\M$ and $\kappa$ is a random measure, then \begin{equation}\tag{5.8}
\langle Z, \Sigma_t \kappa \rangle = \langle \Sigma_{-t} Z, \kappa \rangle, t \in \R.
\end{equation}
The stationarity of $Q_m$ implies that $\Sigma_t$ preserves the classes $\I, \O^0, \O^m, \wP^0, \wP^m$ and that $\Sigma_t({}^{\wp}Z) = {}^{\wp}(\Sigma_t Z)$ and $\Sigma_t({}^0Z) = {}^0(\Sigma_t Z)$. Therefore it follows from (5.8) that if $\H \subset \M$ is invariant under $\Sigma_t$ for all $t \in \R$, then so is $\sigma \I (\H)$. A RM, $\kappa$, is {\it homogenous} and we write $\kappa$ is a HRM provided $\Sigma_t \kappa = \kappa$ for all $t \in \R$. Note that because $\B$ is countably generated, $\kappa \in \sigma \mathcal{I}(\mathcal{M})$ is homogeneous if and only if for each $t \in \R$ and $B \in \B$, $\kappa ( \sigma_t, B - t ) = \kappa ( B )$ a.e. $Q_m$ where the exceptional set is allowed to depend on both $t$ and $B$. 
If $Z \in p\M$ and $\kappa$ is a RM, define
$Z * \kappa(w,B) := \int_{B} Z_t(w) \kappa(w, dt))$
and note that $Z * \kappa$ is a RM provided $Z * \kappa$ satisfies the appropriate finiteness condition.
If, in addition, $Z$ is homogeneous, and $\kappa$ is a HRM, then $Z * \kappa$ is a HRM.
The following proposition is an immediate consequence of the definitions, the above discussion and the fact that ${}^{0 \wp}Z = {}^{\wp 0}Z$ for $Z \in p\M$. See (4.24).

\medskip\noindent
{\bf (5.9) Proposition}
{\it \begin{enumerate}[(a)]
\item 
If $\kappa \in \sigma \I (\wP^m)$ (resp. $\sigma \I (\O^m)$) , then $\Sigma_t \kappa \in \sigma \I (\wP^m)$ (resp. $\sigma \I (\O^m)$) and $(\Sigma_t \kappa)^{\wp} = \Sigma_t (\kappa^{\wp})$ (resp. $(\Sigma_t \kappa)^0 = \Sigma_t(\kappa^0)$) for all $t \in \R$. 
\item 
A RM $\kappa \in \sigma\I(\M)$ is homogeneous if and only if $M_\kappa(\Sigma_tZ) = M_\kappa(Z)$ for $Z \in p\M$ and $t \in \R$. 
\item 
If $\kappa$ is a RM in $\sigma\I(\wP^m) \cap \sigma\I(\O^m)$, then $\kappa^{0\wp} = \kappa^{\wp 0}$.
\end{enumerate}
}

\medskip\noindent
{\bf Remark}
If $f \in p\E^*$ and $\kappa$ is a HRM, then $(f \circ Y_t^*) * \kappa$ is a HRM provided it satisfies the appropriate finiteness condition. In particular, this is the case of $f$ is bounded.

A measure $\mu$ on a measurable space $(A, \A)$ is {\it $s$-finite} (sometimes called $\Sigma$-finite) provided it is a countable sum of finite measures.

\medskip\noindent
{\bf (5.10) Proposition } 
{\it Let $\kappa$ be a HRM. Then 
$$ J(F) := Q_m \int F(t, Y_t^*) \kappa(dt), F \in p(\B \times \E) $$
defines an $s$-finite measure on $\B \times \E$. There exists a unique $s$-finite measure $\mu_\kappa$ on $(E, \E)$ such that $J(F) = \int_\R \int_E F(t, x) \mu_\kappa(dx) dt$ for $F \in p(\B \times \E)$ and $\mu_\kappa$ is called the characteristic measure of $\kappa$. If $\varphi \in p\B$ with $\int_\R \varphi(t) dt = 1$, then 
\begin{equation}\tag{5.11}
\mu_\kappa(f) = Q_m \int_\R \varphi(t) f \circ Y_t^* \kappa(dt).
\end{equation}
Also $\mu_\kappa$ is $\sigma$-finite if and only if $J$ is $\sigma$-finite.}
\begin{proof}
Since $\kappa$ is a countable sum of subMarkov kernels and 
$Q_m$ is $\sigma$-finite it is easy to check that $J$ is $s$-finite. Given $F \in p(\B \times \E)$ let $F_s(t, x) := F(t+s, x)$. Using the stationarity of $Q_m$ and the homogeneity of $\kappa$, 

\begin{equation*}
\begin{split} 
J(F) &= Q_m \Bigg( \int F(t, Y_t^*) \kappa(dt)  \circ \sigma_{- s}\Bigg)\\
       &= Q_m \int F(t+s, Y_t^*) \kappa(dt) = J(F_s)
\end{split}
\end{equation*}
for $s \in \R$. Thus $J$ is an $s$-finite measure on $(\B \times \E)$ that is {\it translation invariant in its first coordinate} as defined in [G; 8.23]. Consequently from [G; 8.23] or [G87] there exists a unique $s$-finite measure, $\mu_\kappa$ on $(E, \E)$ such that 
$J(F) = 
\int \int 
F(t,x) dt \mu_\kappa(dx)$. If $F(t,x) = \varphi(t) f(x)$ with $\varphi \in p\B, f \in p\E$ then $J(f \varphi) = \int \varphi(t) dt \cdot \int f d\mu_\kappa$. Thus if $\int \varphi = 1$ we obtain $\mu_\kappa(f) = Q_m \int \varphi(t) f \circ Y_t^* \kappa(dt)$. It is clear that $\mu_\kappa$ $\sigma$-finite implies that $J$ is $\sigma$-finite. Conversely if $J$ is $\sigma$-finite there exists $F \in p(\B \times \E)$ with $F > 0$ and $J(F) < \infty$. Then $f(x) = \int_\R F(t,x) dt > 0$ and $\mu_\kappa(f) < \infty$, completing the proof of (5.10).
\end{proof}

The characteristic measure $\mu_\kappa$ is sometimes called the Revuz measure of $\kappa$. The next proposition contains several important properties of $\mu_\kappa$ under various assumptions on $\kappa$. 

\medskip\noindent
{\bf (5.12) Proposition} {\it
\begin{enumerate}[(i)]
\item 
Let $\kappa$ be an optional HRM. Then $\kappa \in \sigma\I(\wP^m)$ if and only if $\mu_\kappa$ is $\sigma$-finite. In this case there exists $f \in \E, \varphi \in \B$ with $\mu_\kappa(f) < \infty, 
\int_{\R} \varphi(t) dt < \infty$ such that
$F := \varphi \otimes f > 0$ on $\R \times E$ and $Q_m \int F(t, Y_t^*) \kappa(dt) < \infty$. 
\item 
If $\kappa_1$ and $\kappa_2$ are optional copredictable HRMs then $\kappa_1 = \kappa_2$ if and only if $\mu_{\kappa_1} = \mu_{\kappa_2}$ is a $\sigma$-finite measure. 
\end{enumerate}
}

\begin{proof}
(\textit{i}) If $\mu_\kappa$ is $\sigma$-finite, there exists $f \in \E$ with $f > 0$ and $\mu_\kappa(f) < \infty$.
Let $\varphi \in \B$ with $\varphi > 0$ and $\int_{\R} \varphi(t) dt < \infty$. Then $Z_t := \varphi(t) f \circ Y_t^* > 0$ on $\Lambda^*$ which carries $\kappa$ by definition, $Z$ is copredictable and $Q_m \int Z_t \kappa(dt) = \mu_\kappa(f) \cdot \int \varphi < \infty$. Hence $\kappa \in \sigma \I(\wP^0)$ and one may take $F(t,x) = \varphi(t) f(x)$. 
Conversely suppose $\kappa \in \sigma \I (\wP^m) = \sigma \I (\wP^0)$. Then there exists $Z \in \wP^0$ with $Z>0$ and $Q_m \int Z_t \kappa(dt) < \infty$. Since $\kappa$ is optional, $Q_m \int {}^{0}Z_t \kappa(dt) = Q_m \int Z_t \kappa(dt) < \infty$. From (3.21) and its proof, ${}^{0}Z_t(w) = 1_E(Y_t^*) \int_{\Omega} Z_t(w | t | \omega) P^{Y^*(t, w)} (d \omega)$ where $(w | t | \omega)(s) = w(s)$ if $s<t$ and $(w | t | \omega)(s) = \omega^+(s-t)$ if $s \geq t$. Now $Z \in \wP^0$ and so $Z_t(w) = Z_t(b_t w)$ by (3.5\textit{i}) and $b_t ( w | t | \omega ) = \omega ( s-t )$ if $s > t$ and equals $\Delta$ if $s \leq t$. Hence $Z_t( w | t | \omega)$ does not depend on 
$w$.
Define $F(t, x) := \int Z_t (w | t | \omega) P^x (d \omega)$ if $x \in E$ and $F(t,  \Delta)=0$. Then arguing as in the proof of (3.21), $F \in \B \times \E$. Clearly ${}^{0}Z_t = F(t, Y_t^*)$ on $\{ Y_t^* \in E \}$ and $F > 0$ on $\R \times E$ since $Z > 0$. Let $f(x) := \int F(t, x) dt > 0$ on $E$. Then $\mu_\kappa(f) = Q_m \int F(t, Y_t^*) \kappa(dt) < \infty$. Hence 
$\mu_\kappa$ is $\sigma$-finite.

(\textit{ii}) If $\kappa_1 = \kappa_2$ it is obvious from (5.11) and (i) that
$\mu_{\kappa_1} = \mu_{\kappa_2}$ is a $\sigma$-finite measure. 
On the other hand if
$\kappa_1$ and $\kappa_2$ are optional HRMs with $\mu_{\kappa_1}$ and
$\mu_{\kappa_2}$ $\sigma$-finite, then it follows from (\textit{i})
that there exists $F \in \B \times \E$ with $0 < F \leq 1$ and $Q_m
\int F(t, Y_t^*) \kappa_i (dt) < \infty, i=1,2$. Let $Z \in p(\B
\times \G^0)$. 
Then from (4.25), 
$^{o\hat{p}} Z = \ ^{\hat{p}o}Z \in p(\O^o \cap \wP^o)$.
Hence (3.25) implies that there exists 
$ G \in p(\B \times \E)$
so that 
$^{o\hat{p}} Z = G(t, Y_t^*)$.  Since
$t \rightarrow F(t, Y_t^*)$ is in
$p(\O^o \cap \wP^o)$ and since the $\kappa_i$ are optional and copredictable, we find that
\begin{equation*}
\begin{split}
Q_m \int Z_t F(t, Y_t^*) \kappa_i (dt) &= Q_m \int G(t, Y_t^*) F(t, Y_t^*) \kappa_i(dt)\\
&= \mu_{\kappa_i} (h),
\end{split}
\end{equation*}
where $h := \int G(t, \cdot) F(t, \cdot) dt$. Consequently
\begin{equation}\tag{5.13}
Q_m \int Z_t F(t, Y_t^*) \kappa_1 (dt) = Q_m \int Z_t F(t, Y_t^*) \kappa_2 (dt).
\end{equation}
Since $F$ is bounded $\widetilde{\kappa}_i (dt) := F(t, Y_t^*) \kappa_i (dt)$ are RMs with $Q_m (\widetilde{\kappa}_i(\R)) < \infty$. Use (5.13) with $Z = 1_{B} f$ where $B \subset \R$ is a bounded Borel set and $f \in \G^0$ with $0 < f \leq 1$ and $Q_m (f) < \infty$ to obtain $Q_m ( f \widetilde{\kappa}_1 (\cdot, B)) = Q_m ( f \widetilde{\kappa}_2 (\cdot, B)) < \infty$. But this implies that $\widetilde{\kappa}_1( \cdot, B) = \widetilde{\kappa}_2( \cdot, B)$, $Q_m$ a.s. for each fixed $B \in \B$. As remarked earlier this, in turn, implies that $\widetilde{\kappa}_1 = \widetilde{\kappa}_2$ because $\B$ is countably generated by bounded Borel sets. Finally since $F > 0$ we obtain $\kappa_1 = \kappa_2$.
%
\end{proof} 

Before coming to Fitzsimmons' basic existence theorem for HRMs we need a definition and a lemma.

\medskip\noindent 
{\bf (5.14) Definition}
{\it A set $A \in \G^0$ is $m$-$\theta$-evanescent (resp. $m$-$\theta$-polar) provided $t \to 1_A \circ \theta_t 1_E (Y_t^*)$ (resp. $t \to 1_A \circ \theta_t 1_{]\alpha,\beta[} (t)$) is $Q_m$-evanescent.}

\medskip\noindent
This definition of $m$-$\theta$-evanescence differs slightly from that in [Fi87]. Since $Y_t^* \in E$ and $t \notin ]\alpha, \beta[$ can happen only if $\alpha = t$, it is clear that $A$ is $m$-$\theta$-evanescent if and only if it is $m$-$\theta$-polar and $Q_m ( 1_A \circ \theta_{\alpha}; Y_{\alpha}^* \in E) = 0$. Recall the definition of $\N(m)$ above (2.14).

\medskip\noindent 
{\bf (5.15) Lemma}
{\it Let $\mu$ be a measure on $E$. If $\mu$ doesn't charge sets in $\N(m) \cap \E$, then $P^{\mu}$ doesn't charge $m$-$\theta$-evanescent sets.}
\begin{proof}
Let $A \in \G^0$ be $m$-$\theta$-evanescent and $\mu$ a measure not charging Borel $m$-inessential sets. Using (2.13), more precisely (2.17), for the second equality and $W(b) \subset \{ Y_{\alpha}^* \in E \}$ for the inequality we have
\begin{equation*}
\begin{split}
0 &= Q_m ( 1_A \circ \theta_{\alpha}; Y_{\alpha}^* \in E ) = Q_m ( P^{ Y_{\alpha}^*}(A); Y_{\alpha}^* \in E)\\
&\geq Q_m ( P^{ Y_{\alpha}^*} (A); W(b) ).
\end{split}
\end{equation*}
Let $\rho U$ be the potential part of $m$. [G; 6.20] in the current notation states that $\rho(dx) = Q_m ( Y_{\alpha}^* \in dx, 0 < \alpha < 1, W(b) )$. Consequently from the above display $\int P^x (A) \rho (dx) = 0$. Let $B = \{ x : P^x(A) > 0\}$. Then $B \in \E, \rho(B)=0$ and so to show that $B \in \N(m)$ we must show that $B$ is $m$-polar. For this it suffices to show that $B_{\varepsilon} := \{ x : P^x(A) > \varepsilon \}$ is $m$-polar for $\varepsilon > 0$. Since $\{ (t, w) : P^{ Y(t,w) } (A) > \varepsilon \} \in \O^0$, by the section theorem there exists a $(\G_t^m)$ stopping time $T$ such that $P^{ Y(T) } (A) > \varepsilon$ on $\{ \alpha < T < \beta \}$ and $Q_m (\alpha < T < \beta) > 0$. 
Therefore
\begin{equation*}
\begin{split}
Q_m ( 1_A \circ \theta_T ; \alpha < T < \beta ) &= Q_m ( P^{Y(T)}(A) ; \alpha < T < \beta)\\
&\geq \varepsilon P^{Y(T)} (\alpha < T < \beta) > 0,
\end{split}
\end{equation*}
and this contradicts the $m$-$\theta$-polarity of $A$. Therefore $B_{\varepsilon}$, and hence $B$, is $m$-polar. Thus $B \in \N(m)$ and so $\mu(B)=0$. As a result $P^{\mu}(A) = \int \mu(dx) P^x(A) = 0$; that is $P^{\mu}$ does not charge the arbitrary $m$-$\theta$-evanescent set $A$.
\end{proof} 

It is clear that if $\kappa$ is a HRM, then its characteristic measure, $\mu_\kappa$, does not charge sets in $\N(m)$. Fitzsimmons' basic existence theorem for HRMs is the converse which we now state and prove. (Actually this is a corollary to the basic 
existence theorem in [Fi87].)

\medskip\noindent
{\bf (5.16) Theorem} 
{\it Let $\mu$ be a $\sigma$-finite measure on $E$. Then $\mu$ is the characteristic measure of a unique optional copredictable HRM, $\kappa$, if and only if $\mu$ does not charge $m$-exceptional sets.}

\begin{proof}
Let $\mu$ be a $\sigma$-finite measure on $E$ not charging $m$-exceptional sets. Recall that $\I = \I^m$ is the ideal of $Q_m$-evanescent processes. Given $Z \in p\M$ in view of (3.25) and (4.25) there exists $f_Z \in p(\B \times \E)$ such that
\begin{equation}\tag{5.17}
{}^{0 \wp}Z_t = {}^{\wp 0}Z_t = f_Z(t, Y_t^*) \text{ on } \Lambda^*,
\end{equation}
where as before, equality in $\M$ is modulo $\I$. 
Define
\begin{equation}\tag{5.18}
M(Z) := P^{\mu} \int_{\R} f_Z( t, Y_0^* ) dt, Z \in p\M.
\end{equation}
We claim (5.18) defines $M(Z)$ uniquely; that is, it does not depend on the particular choice of $f_Z$. For this it suffices to show that $M(Z) = 0$ if $Z \in pb\I$. To this end let $Z \in pb\I$. Then we may suppose that $f_Z$ is bounded. Let $\varphi \in p\B$ be a function on $\R$ with compact support and define 
$$ H := \int_{\R} \varphi( t ) f_Z( t, Y_0^* ) dt < \infty. $$
Observe that 
\begin{equation}\tag{5.19}
H \circ \theta_s = \int_{\R} \varphi( t ) f_Z( t, Y_s^* ) dt
                 = \int_{\R} \varphi( s - u ) f_Z( s - u, Y_s^* ) du \text{ on } \Lambda^*,
\end{equation}
and that $f_Z( s-u, Y_s^* ) = f_Z( s-u, Y_{s-u}^* ) \circ \sigma_u$ for each $u \in \R$. Since (5.17) holds modulo $\I$, there exists a set $N \in \G^0$ with $Q_m(N)=0$ such that for all
 $t \in \R$ and $w \notin N, f_Z(t, Y_t^*(w)) = {}^{0 \wp}Z_t(w) = {}^{\wp 0}Z_t(w)=0$ because the projections preserve $\I$. Consequently for each fixed $u \in \R, f_Z(s-u, Y_s^*(w))=0$ for all $s \in \R$ and $w \notin \sigma_u^{-1}(N)$, and of course, $Q_m(\sigma_u^{-1}(N))=0$. Thus for each fixed $s$ and $u$, $f_Z(s-u, Y_s^*)=0$ a.s. $Q_m$, and so by Fubini's theorem for $Q_m$ a.e. $w$ there exists a Lebesgue null set $N_w \in \B$ such that $f_Z(s-u, Y_s^*(w))=0$ for $s \in \R$ and $u \notin N_w$. Hence from (5.19), $Q_m$ a.s.
$$ H \circ \theta_s = \int_{\R} \varphi(s-u) f_Z(s-u, Y_s^*) du = 0$$
for each $s \in \R$. That is, $H = \int \varphi(t) f_Z(t, Y_0^*) dt$ is
$m$-$\theta$-evanescent. Hence from (5.15) and the hypothesis $P^{\mu}(H)=0$. Let $\varphi \uparrow 1$ through a sequence of $\varphi \in p\B$ with compact support to conclude that $P^{\mu} \int_{\R} f_Z(t, Y_0^*)dt=0$. Consequently (5.18) uniquely determines $M(Z)$. It is now clear that $M$ is a measure on $\M$ not charging $Q_m$-evanescent sets. Let $\varphi>0$ on $\R$ and $g>0$ on $E$ with $\int_{\R} \varphi < \infty$ 
and $\int_E g d\mu < \infty$. If $Z_t := \varphi(t) g(Y_t^*)$ then $F_Z = \varphi \otimes g > 0$ and $M(Z) = \int \varphi \cdot \int g d\mu < \infty$ since $Y_0^* = X_0$ a.s. $P^{\mu}$ on $W(b) \cap \Omega$. Thus $M$ is $\sigma$-finite and now it follows from (5.3) that there is a unique RM, $\kappa$, such that $M(Z) = Q_m \int Z_t \kappa(dt), Z \in p\M$. Since $f_{{}^{0\wp}Z} = f_{{}^{\wp0}Z} = f_Z$ by construction $\kappa$ is optional and copredictable. Finally ${}^{0\wp}(\Sigma_{-s}Z_t) = {}^{0\wp}(Z_{t+s}\circ\sigma_{-s}) = f_Z(t+s,Y_{t+s}^*)\circ\sigma_{-s} = f_Z(t+s,Y_t^*)$, and so $M(\Sigma_{-s}Z) = P^{\mu} \int f_Z(t+s, Y_0^*)dt = M(Z)$. Hence $\kappa$ is an optional copredictable HRM and $\mu_\kappa = \mu$. The 
uniqueness assertion follows from (5.12\textit{ii}).
\end{proof}

In the remainder of this section we shall show that the $\kappa$ constructed in Theorem 5.16 may be chosen to have additional measurability and algebraic properties. We begin with a simple lemma that is useful in checking optionality or copredictability of a RM.

\medskip\noindent
{\bf (5.20) Lemma }
{\it Let $\kappa$ be a random measure.
\begin{enumerate}[(a)]
\item If $\kappa$ is optional (resp. copredictable) and $Z \in p\O^m$ (resp. $p\wP^m$), then $Z * \kappa$ is optional (resp copredictable).

\item If there exists $Z > 0, Z \in p\O^m$ (resp. $p\wP^m$) with $\langle Z, \kappa \rangle < \infty$ and $Z * \kappa$ optional (resp. copredictable), 
then $\kappa$ is optional (resp. copredictable).
\end{enumerate}
} 
\begin{proof}
\begin{enumerate}[(a)]
\item Suppose $\kappa$ is optional and $Z \in p\O^0$. If $U \in p\M$, then ${}^{0}(UZ) = {}^{0}U Z$ and so $\langle U, Z * \kappa \rangle = \langle UZ, \kappa \rangle = \langle {}^{0}UZ, \kappa \rangle = \langle {}^{0}U, Z * \kappa \rangle$. Hence $Z * \kappa$ is optional. If $U_1, U_2 \in p\M^m$, then ${}^{\wp}(U_1U_2) = {}^{\wp}U_1 {}^{\wp}U_2$ is clear from (3.17). Hence the copredictable case is established in exactly the same manner.

\item Let $Z$ be as in the statement, say in the optional case. If $U \in p\M$, then 
$$ \langle U, \kappa \rangle = \langle UZ^{-1}, Z * \kappa \rangle
	= \langle {}^{0}U Z^{-1}, Z * \kappa \rangle
	= \langle {}^{0}U, \kappa \rangle $$
and so $\kappa$ is optional. Again the copredictable case is similar. 
\end{enumerate}
\end{proof}
\medskip\noindent
{\bf (5.21) Lemma }{\it Let $\kappa$ be an optional (resp. copredictable) random measure. Then $\kappa = \sum_{n \geq 1} \kappa_n$ where for each $n$, $\kappa_n$ is an optional (resp. copredictable) random measure with $Q_m(\kappa_n(\R)) < \infty$ and the $\kappa_n$ are carried by disjoint sets in $\O^0$ (resp. $\wP^0$). }
\begin{proof}
Since $\kappa \in \sigma \I( \O^0 )$ (resp. $\sigma \I( \wP^0 )$), there exists $Z \in \O^0$ (resp. $\wP^0$) with $0 < Z \leq 1$ and $Q_m \int Z_t \kappa(dt) < \infty$. Let $\Lambda_n := \{ (t,w) : (n+1)^{-1} < Z_t(w) \leq n^{-1} \}$. Then $\Lambda_n \in \O^0$ (resp. $\wP^0$) and the $\Lambda_n$ are disjoint. Define $\kappa_n := 1_{\Lambda_{n}} * \kappa$. Then (5.20) states that $\kappa_n$ is an optional (resp. copredictable) random measure. But $1_{\Lambda_n} \leq (n+1)Z$ and so $Q_m( \kappa_n(\R)) \leq (n+1) \langle Z, \kappa \rangle < \infty$.
\end{proof}
\medskip\noindent
{\bf (5.22) Proposition}{\it Let $\kappa$ be a RM. Then $\kappa$ is optional (resp. copredictable) if and only if there exists $Z > 0, Z \in \O^m$ (resp. $\wP^m$) with $\langle Z, \kappa \rangle < \infty$ such that
\begin{equation}\tag{5.23}
t \to \int_{]-\infty,t]} Z_s \kappa(ds) \in \O^m
\text{ (resp. } t \to \int_{[t, \infty[} Z_s \kappa(ds) \in \wP^m \text{ )}.
\end{equation}
Then (5.23) holds for any $Z \in p\O^m$ (resp. $p\wP^m$) with $\langle Z, \kappa \rangle < \infty$. }
\begin{proof}
In view of (5.20) we must show that the above conditions are equivalent to $Z * \kappa$ being optional (resp. copredictable). In either case $Q_m( (Z*\kappa)(\R)) < \infty$ and so it suffices to show that if $\kappa$ is a RM with $Q_m(\kappa(\R)) < \infty$, then $\kappa$ is optional (resp. copredictable) if and only if $t \to \kappa(]-\infty,t]) \in \O^m$ (resp. $t \to \kappa([t,\infty[) \in \wP^m$). We shall write the proof in the copredictable case, the optional case is similar and somewhat 
simpler.

Therefore suppose $Q_m(\kappa(\R)) < \infty$ and define $A_t := \kappa([t, \infty[)$. Then a.s. $Q_m, A_t < \infty$ for $t > -\infty$, $t \to A_t$ is decreasing, left continuous and 
$ A_t \rightarrow 0$ as $t \rightarrow \infty$.
. Set $\wA_t = A_{-t} \circ r^{-1}$ where $r := W \to \widehat{W}$ is the reversal operator defined in section 3. Then $\wA$ is increasing, right continuous and $\wA_{-\infty} = \lim_{t \to \infty} A_t \circ r^{-1} = 0$. Moreover $\wQ_m(\wA_{\infty}) = \wQ_m(\kappa(\R)) < \infty$ where $\wQ_m = r Q_m$ is defined above (3.13). Therefore $\wA$ is a raw ({\it i.e.} not necessarily adapted) increasing integrable process on $[-\infty, \infty[$ as defined in [DM VI; 51] relative to the system $(\wW, \wG^m, \wG_t^m, \wQ^m)$ defined above (3.13) except that the parameter set is $[-\infty, \infty[$ rather than $[0,\infty[$. Note that $d\wA_t = \kappa(-dt) \circ r^{-1}$. If $Z \in p(\B \times \G^0)$, 
define $\wZ_t = Z_{-t} \circ r^{-1}$ and recall from the proof of (3.15) that ${}^{\wp}Z_t = {}^{p}\wZ_{-t} \circ r$ where of course ${}^{p}\wZ$ is the predictable projection of $\wZ$ relative to $(\wW, \wG^m, \wG_t^m, \wQ^m)$. Observe that 
\begin{equation*}
\wQ_m \int \wZ_t d\wA_t = Q_m \int Z_{-t} \kappa(-dt) = Q_m \int Z_t \kappa(dt).
\end{equation*}
From [DM V; 57, 59], $\wA$ is $(\wG_t^m)$ predictable if and only if $\wQ_m \int \wZ_t d \wA_t = \wQ_m \int {}^{p}\wZ_t d \wA_t$ for $\wZ \in p(\B \times \wG^0)$. Combining this with the above remarks and the definition (5.5), it follows that $\kappa$ is copredictable if and only if $t \to \wA_t = A_{-t} \circ r^{-1}$
is $(\wG_t^m)$ predictable. Since $A$ vanishes on $\llbracket \beta, \infty \llbracket, A_t = \wA_{-t} \circ r \cdot 1_{]-\infty, \beta[}(t)$ is in $p\wP^m$ if and only if $\wA$ is $(\wG_t^m)$ predictable according to (3.3). Consequently $\kappa$ is copredictable (as a RM) is equivalent to $t \to \kappa([t, \infty[)$ being $Q_m$-copredictable (as a process).
\end{proof}

\medskip\noindent
{\bf (5.24) Corollary }
{\it If $\kappa$ is optional (resp. copredictable), then $\kappa(B \cap ]-\infty,t]) \in \G_t^m$ (resp. $\kappa(B \cap [t, \infty[) \in \G_{>t}^m$) for $B \in \B$ and $t \in \R$.}
\begin{proof}
Suppose first that $Q_m(\kappa(\R)) < \infty$. Then $\kappa(w, \cdot)$ is a finite measure on $(\R, \B)$ for $Q_m$ a.e. $w$. Then (5.23) implies that
$\kappa(]-\infty, s]) \in \G_t^m$ for $s \leq t \in \R$ and $\kappa([s, \infty[) \in \G_{>t}^m$ for $t \leq s \in \R$. Clearly this establishes (5.24) when $Q_m(\kappa(\R)) < \infty$. The general case now follows from (5.21).
\end{proof}

If $\kappa$ is a RM, then for each $w$ let $\kappa^d(w, \cdot)$ be the discrete part and $\kappa^c(w,\cdot)$ be the diffuse part of $\kappa(w,\cdot)$. This decomposition takes an especially nice form when $\kappa$ is an optional copredictable HRM. In the remainder of this section it will be
convenient to eliminate parentheses when writing random measures if no confusion is possible. For example, we shall write $\kappa ]a,b]$ in place of $\kappa (]a,b])$ or $\kappa[a]$ for $\kappa(\{a\})$, etc. In addition we shall abbreviate optional copredictable HRM by OCHRM.

\medskip\noindent
{\bf (5.25) Proposition }
{\it Let $\kappa$ be an} OCHRM. {\it Then $\kappa = \kappa^d + \kappa^c$ where $\kappa^c$ is a diffuse} OCHRM {\it and 
$$ \kappa^d = \sum_{t \in \R} j \circ Y_t^* \varepsilon_t $$
where $j \in p\E$ and $\{j>0\}$ is $m$-semipolar.}
\begin{proof}
Since $\k(w, \cdot)$ is a $\sigma$-finite measure on $(\R, \B)$ for each $w \in W, \k[t]>0$ for at most countably many $t$ depending on $w$. Now suppose that $Q_m(\k(\R)) < \infty$. Then 
$$\k[t] = \k ] -\infty, t] - \k ] -\infty, t[ = \k[ t, \infty[ - \k ] t, \infty [.$$ But $t \to \k ] -\infty, t] \in \O^m$ and $t \to \k [ t, \infty [ \in \wP^m$ by (5.22). On the other hand $t \to \k ] -\infty, t [$ is left continuous and adapted to $(\G_{t-}^m)$, hence $(\G_t^m)$ predictable and so $(\G_t^m)$ optional. Since it vanishes on $]-\infty,\alpha[$, it is in $\O^m$. Similarly $t \to \k ] t, \infty [$ is right continuous and adapted to $(G_{>t}^m)$, hence in $\wP^m$. Consequently $t \to \k[t] \in \O^m \cap \wP^m$. In view of (5.21), this then holds for any optional copredictable RM. Since $\kappa$ is homogeneous for each $s \in \R$, $\k[t] \circ \sigma_s = \k[t+s]$ for all $t, Q_m$ a.s. Now (3.25) implies that there exists $j \in p\E$ such that $\k[t]$ and $j \circ Y_t^*$ are $Q_m$-indistinguishable. Therefore a.s. $Q_m, j \circ Y_t^* > 0$ for at most countably 
many $t$, and so $\{j>0\}$ is $m$-semipolar. Define $\k^d := \sum_{t \in \R} j \circ Y_t^* \varepsilon_t$ and $\k^c = \k - \k^d$. Clearly $\k^d$ is a discrete RM and $\k^d$ is homogeneous. Also $\k^d$ is carried by $\{j>0\}$ and $\k^c$ does not charge $\{j>0\}$. Therefore, letting $h := 1_{\{j>0\}}, \k^d = h \circ Y^* * \k$. If $Z \in p\M$, then
\begin{equation*}\begin{split}
Q_m \int Z_t \k^d(dt) 
&= Q_m \int Z_t h \circ Y_t^* \k(dt)\\
&= Q_m \int {}^{0 \wp}Z_t h \circ Y_t^* \k(dt)\\
&= Q_m \int {}^{0 \wp}Z_t \k^d(dt)\\
\end{split}\end{equation*}
where the second inequality follows since $h \circ Y^* \in p(\O^0 \cap \wP^0)$. Consequently $\k^d$ is a discrete OCHRM and $\k^c$ is a diffuse OCHRM.
\end{proof}

Clearly a diffuse RM measure does not charge $m$-semipolar sets. Consequently the following is an immediate corollary of (5.25) and Theorem 5.16.

\medskip\noindent
{\bf (5.26) Corollary }
{\it A $\sigma$-finite measure $\mu$ on $E$ is the characteristic measure of a diffuse} OCHRM {\it if and only if $\mu$ does not charge $m$-semipolar sets.}

So far we have identified random measures that are $Q_m$-indistinguishable. However it is important in many applications to be able to choose an especially nice representative from a given equivalence class of random measures. Our last theorem of this section states that this is possible for optional copredictable homogeneous random measures. Its proof depends on the following result which is a version for additive functionals of Meyer's master perfection theorem for multiplicative functionals in Appendix A of [G]. It would be possible just to appeal to this result. However, the first part of the proof is somewhat simpler in the present situation. Since for this part of the proof in [G] I quote results from [S] for the most part, I have decided to include a complete proof of this very important result which allows one to operate freely with OCHRMs. For its statement we use our standard notation (introduced above (2.8)). And remind the reader that this is isomorphic to the canonical representation of the Borel right semigroup $(P_t)$. As before $m \in \Exc$. \\

\numberwithin{equation}{section}

\numberwithin{theorem}{section}
\setcounter{theorem}{26}
\setcounter{section}{5}
\section*{}
\setcounter{subsection}{2}
\subsection*{}


\begin{theorem} Let $A = (A_t(\omega), t \ge 0)$ be a process such that $A_t: \Omega \rightarrow [0, \infty[$ for each $t \in \mathbb{R}^+ = [0, \infty[$ and 

\begin{enumerate}[(i)]

\item for each $\omega$, $A_0(\omega) =0$, $t \rightarrow A_t(\omega)$ is finite, right continuous, and increasing on $\mathbb{R}^+$ and constant on $[ \zeta(\omega), \infty[$ ; \\

\item $A_t \in \mathcal{F}^m_{t+}$ for $t \ge 0$; \\

\item for each fixed $(s,t) \in \mathbb{R}^+$, $A_{t+s} = A_s + A_t \circ \theta_s$ a.s. $P^m$. \\

Then $A$ is $P^m$ indistinguishable from a process $L=\{ L_t; t \in \mathbb{R}^+\}$ with $L_t : \Omega \rightarrow \overline{\mathbb{R}}^+:=[0,\infty]$ and such that the following properties hold:

\item for each $\omega$, $t \rightarrow L_t(\omega)$ is right continuous, increasing on $\mathbb{R}^+$ and constant on $[\zeta(\omega), \infty[, s \rightarrow L_{t-s}(\theta_s\omega)$ is right continuous and decreasing on $[0,t[,$ $L_0(\omega)$ is either zero or infinite and $t \rightarrow L_t(\omega)$ is finite on  $\mathbb{R}^+$ if $L_0(\omega) =0, L_0([\Delta]) =0$ and $P^m(L_0=\infty) = 0$;

\item $L_t \in \mathcal{F}^*_{t+}$ for $t \ge 0$ and $L_{t-s} \circ \theta_s \in \mathcal{F}^*_{>s}$ for $0 \le s < t$;  

\item $L_{t+s}(\omega) = L_s(\omega) + L_t(\theta_s \omega)$ indentically in $(s,t,\omega) \in  \mathbb{R}^+ \times  \mathbb{R}^+ \times \Omega$. 

\end{enumerate}
\end{theorem}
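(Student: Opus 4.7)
The plan is to adapt Meyer's master perfection technique to additive functionals: one constructs a pathwise modification $L$ of $A$ so that the additivity identity (vi) holds literally for every $(s, t, \omega) \in \mathbb{R}^+ \times \mathbb{R}^+ \times \Omega$, while keeping $L$ $P^m$-indistinguishable from $A$ and universally measurable in the sense of (v).

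First I would set
$\Omega_0 := \{\omega \in \Omega : A_{s+t}(\omega) = A_s(\omega) + A_t(\theta_s\omega) \text{ for all } s, t \in \mathbb{Q}^+\}$,
which satisfies $P^m(\Omega_0^c) = 0$ by (iii) applied countably. Using pathwise right continuity from (i), the identity on $\Omega_0$ extends to $(s, t) \in \mathbb{Q}^+ \times \mathbb{R}^+$ by approximating $t$ along rationals from above. To gain shift stability I would pass to
$$\Omega_1 := \Omega_0 \cap \bigcap_{h \in \mathbb{Q}^+} \theta_h^{-1}(\Omega_0).$$
The key technical input is that $m \in \Exc$ gives $mP_h \leq m$, and hence via the Markov property $P^m(\theta_h^{-1} N) = m(P_h(P^{\cdot}(N))) \leq P^m(N)$ for any positive $N \in \mathcal{F}^m$, so $P^m(\Omega_1^c) = 0$. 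By construction $\Omega_1$ is invariant under $\theta_h$ for every rational $h \geq 0$.

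I would then set $L_t(\omega) := A_t(\omega)$ for $\omega$ in a still smaller ``super-good'' set $G \subset \Omega_1$, taken to be $\theta_s$-invariant for every real $s \geq 0$, and $L_t(\omega) := +\infty$ (for every $t \geq 0$, including $t = 0$) off $G$, with the usual $L_t([\Delta]) := 0$. The set $G$ is obtained from $\Omega_1$ by a right regularization in the shift variable: one replaces $A_t$ by $\limsup_{h \downarrow 0,\, h \in \mathbb{Q}^+} A_t \circ \theta_h$, which on $\Omega_0$ agrees with $A_t$ (because $A_t(\theta_h\omega) = A_{h+t}(\omega) - A_h(\omega) \to A_t(\omega)$ by pathwise right continuity and $A_0 \equiv 0$), lies in $\mathcal{F}^m_{t+} \subset \mathcal{F}^*_{t+}$ (as $A_t \circ \theta_h \in \mathcal{F}^m_{(t+h)+}$ and $\bigcap_{h > 0} \mathcal{F}^m_{(t+h)+} = \mathcal{F}^m_{t+}$ by right continuity of the filtration), and allows the additivity identity $A_{s+t}(\theta_h\omega) = A_s(\theta_h\omega) + A_t(\theta_{h+s}\omega)$ for $\omega \in \Omega_1$ to be passed through $\limsup_{h \downarrow 0}$ to give (vi). Since $G$ has $P^m$-null complement this delivers $L = A$ almost surely, and the dichotomy $L_0 \in \{0, \infty\}$ of (iv) follows from $A_0 \equiv 0$ on $G$ versus the $+\infty$ convention on $G^c$.

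The main obstacle is the construction of the $\theta_s$-invariant super-good set $G$ for \emph{all} real $s$ rather than only rational ones: the naive candidate $\bigcap_{s \geq 0} \theta_s^{-1}\Omega_0$ is an uncountable intersection whose measurability and $P^m$-fullness are not immediate, and one must instead build $G$ out of the right regularization above so that it is described by countably many conditions, lies in $\mathcal{F}^*$, and is exactly stable under every $\theta_s$. Coupled with this is the bookkeeping required to verify that $L|_G$ is pathwise right continuous, increasing, finite and constant on $[\zeta, \infty[$ as demanded by (iv), and that the splicing identity $L_{t-s}(\theta_s\omega) = L_t(\omega) - L_s(\omega)$ forced by (vi) is right continuous and decreasing in $s$; these are routine consequences of (vi) and the pathwise regularity of $A$ on $G$, but carrying them out cleanly without leaving $\mathcal{F}^*_{t+}$ is the genuine technical content of the appendix.
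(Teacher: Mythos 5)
There is a genuine gap, and it sits exactly where you locate "the main obstacle": you never actually construct the set $G$, and the route you sketch for it cannot work as stated. Your $\Omega_0$ guarantees additivity only for rational $s$ (extended to real $t$ by right continuity in $t$), but the identity $A_{s+t}(\omega)=A_s(\omega)+A_t(\theta_s\omega)$ for \emph{irrational} $s$ does not follow by letting rationals $s_n\downarrow s$, because $s\mapsto A_t(\theta_s\omega)$ need not be right continuous — $A_t$ is not a continuous functional of the path. This is precisely why the paper first passes to Walsh's essential-limit regularization $\Lambda_t(\omega)=\operatorname*{ess\,lim\,sup}_{s\downarrow 0}A_{t-s}(\theta_s\omega)$ (Lemma A.7), which produces a version for which $s\mapsto\Lambda_{t-s}(\theta_s\omega)$ \emph{is} right continuous and decreasing on the good set; your ordinary $\limsup$ along rational shifts is in the right spirit but you do not carry out any of the verification, and the almost-everywhere (Fubini) formulation of the good sets, rather than the rational one, is what makes that verification go through.

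The second, and deeper, missing idea is how to remove the exceptional set while keeping $L_t\in\mathcal{F}^*_{t+}$ and additivity identical on all of $\Omega$. Your prescription "$L=A$ on a super-good $\theta_s$-invariant $G$, $L=+\infty$ off $G$" is essentially the paper's intermediate construction $A^*_t=\Lambda_{t-R}\circ\theta_R$ with $R=\inf\{t:\theta_t\omega\in\Omega_0\}$, and the paper explicitly points out why it fails: one only gets $L_t\in\mathcal{F}^*$, not $\mathcal{F}^*_{t+}$, because it is not clear that $R$ (equivalently, your $G=\{R=0\}$) is suitably measurable with respect to $\mathcal{F}^*_{0+}$. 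The actual proof abandons this and uses Meyer's device: define $J_t(\omega)=\sup_\tau\sum_i\Lambda_{t_i-s_i}(\theta_{s_i}\omega)$ over "good partitions" $\tau$ of $[0,t]$, show the supremum may be taken over rational partitions (which yields $J_t\in\mathcal{F}^*_{t+}$), show $J_{t+s}=J_t+J_s\circ\theta_t$ identically on $\Omega$, and then make two further modifications ($K$ to restore right continuity by subtracting accumulated jumps, and a final limit $L_t=\uparrow\lim_{s\downarrow 0}\widetilde K_{t-s}\circ\theta_s$ to get $L_{t-s}\circ\theta_s\in\mathcal{F}^*_{>s}$), plus a separate treatment of the possible jump at $\zeta$ (condition (A.11) and its removal). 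None of this machinery is present or replaced by an alternative in your proposal, so the central perfection step remains unproved.
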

Note that since $L_0[\Delta] =0$ and $\zeta([\Delta]) = 0$, $L_t([\Delta]) =0$ for all $t \ge 0$. The proof of Theorem 5.27 is postponed to the appendix following this section. However it will be used to prove the next result which establishes the existence of a ``good'' version of an OCHRM.

\begin{theorem} An OCHRM is $Q_m$ indistinguishable from an OCHRM $\kappa$, with the following additional properties:

\begin{enumerate}[(i)]

\item $\kappa$ is a kernel from $(W, \mathcal{G}^*)$ to $(\mathbb{R}, \mathcal{B})$ where $\mathcal{G}^*$ is the universal completion of $\mathcal{G}^0$, $\kappa(w, \cdot)$ is $\sigma$-finite for every $w \in W$ and $\kappa([\Delta], \cdot) =0$ ;

\item $\kappa(\sigma_tw, B) = \kappa( w, B+t)$ for all $w \in W, t \in \mathbb{R}$ and $B \in \mathcal{B}$;
\item $\kappa(b_t w, B) = \kappa( w, B \cap \ ]t, \infty[)$ for all $w \in W, t \in \mathbb{R}$ and $B \in \mathcal{B}$;
\item $\kappa(k_t w, B) = \kappa( w, B \cap \ ]-\infty,t[)$ for all $w \in W, t \in \mathbb{R}$ and $B \in \mathcal{B}$;

\item for each $w \in W, \kappa(w, \cdot) = \sum_{t \in \mathbb{R}} j \circ Y^*_t(w) \epsilon_t + \kappa^c( w, \cdot)$, where $j \in p\mathcal{E}$ with $\{j>0\}$ semipolar and $k^c(w, \cdot)$ diffuse. 

\end{enumerate}\end{theorem}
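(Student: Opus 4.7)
The plan is to combine the decomposition from Proposition~5.25 with the master perfection theorem~5.27. By~5.25 the given OCHRM splits as $\kappa^d+\kappa^c$, where $\kappa^d=\sum_t j\circ Y_t^*\,\varepsilon_t$ with $j\in p\E$ and $\{j>0\}$ semipolar, and $\kappa^c$ is a diffuse OCHRM; the discrete part admits an evident perfect representative $\tilde\kappa^d(w,B):=\sum_{t\in B}j(Y_t^*(w))$, which is $\G^*$-measurable in $w$ (with $j(\Delta)=0$), vanishes on $[\Delta]$, and inherits properties~(ii)--(iv) directly from the corresponding pointwise identities for $Y^*$ under $\sigma_t$, $b_t$, $k_t$ which are immediate from definition~(2.10). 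This reduces the problem to producing a perfect representative of the diffuse $\kappa^c$.

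To perfect $\kappa^c$, first reduce to the integrable case by replacing $\kappa^c$ by $Z*\kappa^c$ for a strictly positive bounded $Z\in\wP^0\cap\O^0$ with $\langle Z,\kappa^c\rangle<\infty$ (and recovering $\kappa^c$ by dividing by $Z$ at the end), then restrict to $\Omega$ by setting $A_t(\omega):=\kappa^c(\omega,\,]0,t])$ for $\omega\in\Omega$, $t\ge 0$. The process $A$ satisfies the hypotheses of Theorem~5.27: $A_0=0$ by diffuseness and $\alpha|_\Omega=0$; $t\mapsto A_t$ is right-continuous, increasing, finite and constant on $[\zeta,\infty[$ since $\kappa^c$ is carried by $\Lambda^*\subset[\alpha,\beta[$ and $\beta|_\Omega=\zeta$; $A_t\in\F_{t+}^m$ by Corollary~5.24 together with $\G_t^m|_\Omega\subset\F_{t+}^m$; and the $P^m$-a.s.\ additivity $A_{s+t}=A_s+A_t\circ\theta_s$ follows from the $\Sigma_s$-invariance of $\kappa^c$ under $Q_m$ combined with the transfer formula~(2.13). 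Theorem~5.27 then furnishes $L$, $P^m$-indistinguishable from $A$, satisfying the \emph{pointwise} identity $L_{s+t}(\omega)=L_s(\omega)+L_t(\theta_s\omega)$ and the measurability $L_{t-s}\circ\theta_s\in\F_{>s}^*$ for $0\le s<t$.

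Now lift $L$ back to $W$. For $w\in W$ set $\Lambda_w^*:=\{s:(s,w)\in\Lambda^*\}$; for every $s\in\Lambda_w^*$ the path $\theta_sw$ lies in $\Omega$ and $u\mapsto L_{u-s}(\theta_sw)$ defines a right-continuous increasing function on $]s,\infty[$. The pointwise additivity of $L$ forces these functions to agree on their overlaps as $s$ varies in $\Lambda_w^*$: for $s_1<s_2$ in $\Lambda_w^*$ and $u>s_2$, the identity $L_{u-s_1}(\theta_{s_1}w)=L_{s_2-s_1}(\theta_{s_1}w)+L_{u-s_2}(\theta_{s_2}w)$ forces the increments over intervals in $]s_2,\infty[$ computed from $s_1$ or $s_2$ to coincide. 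Pasting produces a single $\sigma$-finite measure $\tilde\kappa^c(w,\cdot)$ on $]\alpha(w),\beta(w)[$; set $\tilde\kappa^c(w,\cdot)=0$ when $\Lambda_w^*=\emptyset$ (in particular for $w=[\Delta]$). The $\G^*$-measurability of $w\mapsto\tilde\kappa^c(w,B)$ follows from $L_{t-q}\circ\theta_q\in\F_{>q}^*$ and the fact that $\tilde\kappa^c(w,B\cap\,]q,\infty[)$ can be computed as an $L$-increment for any rational $q<\inf B$ with $q\in\Lambda_w^*$; properties~(ii)--(iv) then reduce to routine computations with how $\alpha$, $\beta$ and $\theta_\cdot$ transform under $\sigma_t$, $b_t$, $k_t$, using $\theta_a\circ\sigma_t=\theta_{a+t}$ and the pointwise additivity of $L$; $Q_m$-indistinguishability from $\kappa^c$ is built into the construction.

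The main technical obstacle I expect is precisely the passage from a.s.\ to \emph{pointwise} identities: (ii)--(iv) are demanded for every $w\in W$, whereas the input $\kappa^c$ is only homogeneous modulo $Q_m$. This is exactly what invoking Theorem~5.27 is designed to resolve, its output $L$ satisfying pointwise rather than a.s.\ additivity; the real work lies in translating that one pointwise identity on $\Omega$ into three pointwise functional equations on $W$, while handling uniformly the paths with $\alpha(w)=-\infty$ and the paths $w\notin W(b)$ for which $\alpha(w)\notin\Lambda_w^*$. A secondary issue is the pointwise diffuseness required in~(v): any residual pointwise jumps of $\tilde\kappa^c$, being $(\O^0\cap\wP^0)$-measurable and homogeneous, can be absorbed into $\tilde\kappa^d$ via Theorem~3.25 without disturbing the semipolarity of $\{j>0\}$.
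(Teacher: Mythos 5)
Your overall route is the paper's: decompose the given OCHRM via (5.25) into a discrete part $\kappa_j=\sum_t j\circ Y_t^*\varepsilon_t$ plus a diffuse part, reduce the diffuse part to an integrable/finite case, transfer to $\Omega$ via $A_t=\kappa(]0,t])$, invoke the perfection theorem (5.27), and paste the functions $u\mapsto L_{u-s}(\theta_s w)$ back into a measure on $W$ (the paper does this pasting through the explicit Carath\'eodory-type Lemma 5.32 applied to $\lambda(a,b,w)=L_{b-a}(\theta_a w)$, and verifies (iii)--(iv) with the measurability Lemma 5.33 plus diffuseness to handle endpoints). But there is one genuine gap: Proposition 5.25 gives $\{j>0\}$ only $m$-\emph{semipolar}, whereas (v) demands it be \emph{semipolar}. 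You assert semipolarity at the outset and never close that distance. The upgrade is not formal: the paper needs Lemma 5.41 --- a Borel $m$-semipolar set is the disjoint union of a Borel semipolar set and a Borel set in $\N(m)$ --- whose proof rests on Dellacherie's characterization of $\rho$-semipolar sets [D88], the one external result the introduction explicitly flags. Only after that can one replace $j$ by $j'=j1_B$ with $\{j'>0\}$ semipolar and $\kappa_{j'}$ indistinguishable from $\kappa_j$.

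A second, smaller gap concerns the ``for every $w$'' clauses in (i) and (v). The $L$ produced by (5.27) is $[0,\infty]$-valued and is finite and continuous only $P^m$-a.s., so your pasted measure need be neither $\sigma$-finite nor diffuse for every $w\in W$. The paper cures this by cutting down to the set $G$ of paths on which $\overline{\kappa}(w,\cdot)$ is a diffuse Radon measure, proving $G\in\G^*$ (a projection argument) and $\sigma_t^{-1}G=G$, and setting $\widetilde{\kappa}=1_G\overline{\kappa}$. Your proposed fix --- absorbing ``residual jumps'' into the discrete part via (3.25) --- only yields statements modulo $Q_m$-indistinguishability and therefore cannot deliver the pointwise diffuseness required in (v). Relatedly, when you divide by $Z$ at the end to undo the integrability reduction, $Z$ must have the special form $\varphi(t)f(Y_t)$ (the paper works with $\mu_f=f\mu$ and divides by $f\circ Y_t$) so that division preserves the pointwise identities (ii)--(iv); a generic strictly positive element of $\wP^0\cap\O^0$ will not.
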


\begin{definition} A perfect RM is an OCHRM having the additional properties enumerated in (5.28) \end{definition}

The following corollary is now a consequence of Theorems 5.16 and 5.28.

\begin{corollary}A $\sigma$-finite measure, $\mu$ on $E$ not charging $m$-exceptional sets is the characteristic measure of a perfect $RM$, $\kappa$. Moreover $\kappa$ is diffuse if and only if $\mu$ does not charge semipolar sets. \end{corollary}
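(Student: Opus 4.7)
The plan is to assemble the corollary from three earlier results: Theorem 5.16 (existence of an OCHRM with a prescribed characteristic measure), Theorem 5.28 (existence of a perfect version of any OCHRM), and Corollary 5.26 (characterization of diffuse OCHRMs).

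First I would apply Theorem 5.16 to $\mu$, which is permitted since $\mu$ is $\sigma$-finite on $E$ and does not charge $m$-exceptional sets. This yields an OCHRM $\kappa_0$ with $\mu_{\kappa_0} = \mu$. Applying Theorem 5.28 to $\kappa_0$ produces a perfect RM $\kappa$ that is $Q_m$-indistinguishable from $\kappa_0$. The key observation is that, by formula (5.11), the characteristic measure is a $Q_m$-integral and hence depends only on the $Q_m$-equivalence class of the random measure; therefore $\mu_\kappa = \mu_{\kappa_0} = \mu$, establishing the existence claim.

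For the diffuseness equivalence, the ``only if'' direction is immediate: if the perfect RM $\kappa$ is diffuse, then it is in particular a diffuse OCHRM, so Corollary 5.26 gives that $\mu = \mu_\kappa$ does not charge $m$-semipolar sets. For the ``if'' direction, apply Corollary 5.26 to obtain a diffuse OCHRM $\kappa_0$ with $\mu_{\kappa_0} = \mu$, and then apply Theorem 5.28 to produce a perfect representative $\kappa$.

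I expect the main obstacle to lie in upgrading this perfect representative to one that is diffuse in the pointwise sense demanded by the perfect decomposition (5.28)(v). Writing $\kappa(w,\cdot) = \sum_t j\circ Y^*_t(w)\,\varepsilon_t + \kappa^c(w,\cdot)$, the discrete part is $Q_m$-indistinguishable from $\kappa_0^d = 0$, so the process $(t,w)\mapsto j\circ Y^*_t(w)$ is $Q_m$-evanescent. By Proposition 2.14 this forces $\{j>0\}\in\N(m)\cap\E$. I would then redefine $j$ to be zero on this $m$-exceptional Borel set; the perfect properties (i)--(v) of Theorem 5.28 are preserved under this modification, and the resulting $\kappa$ satisfies $\kappa(w,\cdot) = \kappa^c(w,\cdot)$ for every $w$, hence is a diffuse perfect RM with characteristic measure $\mu$.
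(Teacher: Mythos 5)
The existence part and the ``only if'' half of your argument are fine and match the paper's intent (the corollary is presented as an immediate consequence of Theorems 5.16 and 5.28). The ``if'' half has two genuine problems. First, you invoke Corollary 5.26, whose hypothesis is that $\mu$ does not charge \emph{$m$-semipolar} sets, whereas the corollary only assumes $\mu$ does not charge \emph{semipolar} sets --- a strictly weaker condition, since every semipolar set is $m$-semipolar but not conversely. The bridge is Lemma 5.41: a Borel $m$-semipolar set is the disjoint union of a Borel semipolar set and a Borel set in $\N(m)$, so under the standing hypothesis that $\mu$ charges no $m$-exceptional sets, ``not charging semipolars'' does imply ``not charging $m$-semipolars.'' This is precisely the improvement emphasized in the Remark following the corollary, and you skip it entirely (in the ``only if'' direction you likewise silently substitute ``$m$-semipolar'' for ``semipolar'').

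Second, the final modification is incoherent as written: $j$ is read off from $\kappa$ via the decomposition (5.28 v), so ``redefining $j$ to be zero'' does not change the kernel $\kappa$, and in particular does not remove the atoms of $\kappa(w,\cdot)$ for those $w$ in the $Q_m$-null set on which $Y^*$ actually meets $\{j>0\}$. To obtain a kernel that is diffuse for \emph{every} $w$ you would have to replace $\kappa$ by its diffuse part $\kappa^c$ and then re-verify properties (i)--(iv) of (5.28) for $\kappa^c$ (measurability of the diffuse part as a kernel over $\G^*$, and the $\sigma_t$, $b_t$, $k_t$ identities), none of which you address. The cleaner route --- the one the paper's proof of (5.28) actually takes --- is to note that once $\mu$ is known not to charge $m$-semipolars, the first part of that proof directly constructs a perfect $\kappa$ with $\kappa(w,\cdot)$ diffuse for every $w$, so no post-hoc surgery is needed.
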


\noindent {\bf Remark} Note that (5.28 $v$) is an improvement of (5.25) in that $\{j >0\}$ is semipolar rather than $m$-semipolar. \\

\textit{Proof of (5.28).} Suppose first of all that $\mu$ is a finite measure not charging $m$-semipolar sets. Let $\kappa$ be an OCHRM with $\mu^\kappa = \mu$. In view of (5.16)  we may suppose that $\kappa$ is diffuse. Then $Q_m \int e^{-|t|} \kappa (dt) = 2 \mu(E)  < \infty$, and so $\kappa$ is a Radon measure on $(\mathbb{R}, \mathcal{B})$ a.s. $Q_m$. Redefining $\kappa = 0$ on the exceptional set, we may and shall suppose that $\kappa(w, \cdot)$ is a diffuse Radon measure for each $w \in W$. Now define $A_t(\omega) := \kappa(\omega, ]0,t])$ for $t \ge 0$ and $\omega \in \Omega$. Thus $t \rightarrow A_t(\omega)$ is a finite increasing continuous function on $[0, \infty[$ with $A_0(\omega) = 0$ for all $\omega \in \Omega$. Since $A_t = \kappa (]0, t]|)_\Omega$ it follows from (5.24) that $A_t \in \mathcal{G}^m_t |_\Omega = \mathcal{F}^m_t$. Recall that both $(\mathcal{G}^m_t; t \in \mathbb{R})$ and $(\mathcal{F}^m_t ; t \ge 0)$  are right continuous filtrations on $W$ and $\Omega$ respectively. If $B \subset ]0, \infty [, B \in \mathcal{B}$ then from (5.24) since $b_0 = \theta_0$, $\kappa(B) = \kappa(B) \circ b_0 = \kappa(B) \circ \theta_0 \in \mathcal{G}^m_{>0} |_\Omega = \mathcal{F}^m_{>0}.$ In particular  $A_t \in \mathcal{F}^m_{>0}$ for $t >0$ and so if $0 \le s < t, A_{t-s} \circ \theta_s \in \theta_s^{-1} \mathcal{F}^m_{>0} = \mathcal{F}^m_{>s}$. Also if $t> 0$, $\kappa(B) \circ  \theta_t = \kappa(B) \circ  b_0 \sigma_t = \kappa(B) \circ  \sigma_t$ a.s $Q_m$ for such B since $b_0 Q_m = Q_m$ on $\mathcal{G}^m_{>0}$. Let $s,t \ge 0$ and define $\Lambda = \{ \kappa ]0,t+s] \ne \kappa ]0,t]+\kappa ]0,s] \circ \theta_t \}$. By the above considerations since $\kappa$ is a HRM, $Q_m(\Lambda) = 0$ and also $\Lambda \in \mathcal{G}^m_{>0}$. Therefore

\[ 0 = Q_m(\Lambda) \ge Q_m(\Lambda; \alpha < 0 < \beta) = P^m(\Lambda). \]

\noindent Hence $A_{t+s} = A_t + A_s\cdot \theta_t$ a.s. $P^m$ for fixed $s,t \ge 0$. Finally since $\kappa$ is carried by $\Lambda^* \subset [ \alpha, \beta [, t \rightarrow A_t(\omega)$ is constant on $[\zeta(\omega), \infty [.$ Thus $A$ satisfies the hypotheses of Theorem 5.27 and so $A$ is $P^m$ indistinguishable from a process $L$ satisfying \textit{(iv) --- (vi)} in the conclusion of Theorem 5.27. Since $A$ is continuous, $t \rightarrow L_t$ is continuous $P^m$ a.s. on $\mathbb{R}^+$. But we cannot modify $L$ to be continuous for all $\omega$ without destroying the good properties  \textit{(v) and (vi)} of (5.27). However $L$ is an additive functional (AF) of $X$ that is $P^m$ indistinguishable from $A$. \\ 

In order to continue we need a lemma that is a simple extension of a well known result in measure theory. See for example [F084, 1.16]. However for completeness we are going to give a proof, adopting the argument in [F084] with some simple modifications. In order to state the lemma concisely we first introduce some notation. Fix $\alpha \in [-\infty, \infty[$ and define $\mathcal{H}_\alpha$ to be the collection of all half-open intervals $]a,b]$ with $\alpha < a < b < \infty$ together with intervals of the form $]a, \infty[, a > \alpha$, and of the form $]\alpha, b], b < \infty$.  Note that $] \alpha, \infty[$ is the disjoint union of two elements of $\mathcal{H}_\alpha$. When $I \in \mathcal{H}_\alpha$, the complement of $I$, $I^c$, means the complement of $I$ in $]\alpha, \infty[$; that is, $I^c = ]\alpha, \infty[ \ \setminus \ I$. Clearly $\mathcal{H}_\alpha$ is closed under finite intersections and with the above convention the complement of $I \in \mathcal{H}_\alpha$ is either in $\mathcal{H}_\alpha$ or the disjoint union of two intervals in $\mathcal{H}_\alpha$. It will be convenient to define $H_\alpha = ]\alpha, \infty [$. It is a standard fact that the collection $\mathcal{A}_\alpha$, of finite disjoint unions of intervals in $\mathcal{H}_\alpha$ forms an algebra of subsets of $H_\alpha$. Of course, $\mathcal{B}_\alpha = \sigma(\mathcal{H}_\alpha) = \sigma ( \mathcal{A}_\alpha)$ where $\mathcal{B}_\alpha$ is the Borel $\sigma$-algebra of $H_\alpha$. Since the elements of $\mathcal{H}_\alpha$ are contained in $H_\alpha \subset \mathbb{R}$, we adopt the convention that if $b = \infty$, the statement $I = ]a,b] \in \mathcal{H}_\alpha$ means $I \cap H_\alpha = ] a, \infty[ \in \mathcal{H}_\alpha$. Suppose given a function $\lambda: \{ (a,b); \alpha<a<b<\infty \} \rightarrow 
\oR^+ = [0, \infty]$ satisfying the following conditions:


\numberwithin{equation}{section}
\setcounter{equation}{30}
\setcounter{section}{5}

\begin{align}
(i) \qquad & b \rightarrow \lambda (a,b) \mathrm{ \ is \  increasing \  and \ right \  continuous\  on} \notag \ ]a,\infty[, a > \alpha; \\
(ii)\qquad  & a \rightarrow \lambda (a,b) \mathrm{\ is\  decreasing \ and\  right\  continuous\ on} \ ]\alpha, b[, b < \infty; \\
(iii) \qquad  & \alpha < a < b < c < \infty, \mathrm{ \  then}\  \notag \lambda(a,c) = \lambda(a,b) +\lambda(b,c).
\end{align}

\noindent Note that if $F: \mathbb{R}: \rightarrow [0, \infty[$ is right continuous and increasing then $\lambda(a,b) = F(b)-F(a), \alpha < a < b < \infty$ satisfies (5.31) for any $\alpha \ge -\infty$. However we make no finiteness assumptions on $\lambda$; $\lambda$ might be indentically infinite. Define $\lambda(\alpha,b) := \uparrow \lim_{a \downarrow \alpha} \lambda(a,b)$ if $b< \infty$ and define $\lambda(a, \infty):= \uparrow \lim_{b\uparrow \infty}\lambda(a,b)$ if $a > \alpha$ and note that (5.3 \textit{iii}) continues to hold when $\alpha \le a< b \le \infty$. \\


\setcounter{theorem}{31}

\begin{lemma} Fix $\alpha \ge - \infty$ and let $\lambda:\{(a,b); \alpha <a <b<\infty\} \rightarrow [0, \infty]$ satisfy (5.31). Then there exists a measure $\nu$, on $(H_\alpha, \mathcal{B}_\alpha)$ such that if $I= ]a,b] \in \mathcal{H}_\alpha$, $\nu(I) = \lambda(a,b)$. Moreover if $\overline{\nu}$ is another measure on $(H_\alpha, \mathcal{B}_\alpha)$ with $\overline{\nu}=\nu$ on $\mathcal{A}_\alpha$, then $\overline{\nu}(B) \le \nu(B)$ for all $B \in \mathcal{B}_\alpha$ with equality when $\nu(B) < \infty$. Of course, $\overline{\nu} = \nu$ on $\mathcal{A}_\alpha$ if and only if $\overline{\nu}=\nu$ on $\mathcal{H}_\alpha$. \end{lemma}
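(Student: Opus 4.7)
The plan is to reduce to the classical Carathéodory extension theorem. First I would define a set function $\nu_0$ on the algebra $\mathcal{A}_\alpha$ by finite additivity: if $A = \bigsqcup_{k=1}^{n} \,]a_k, b_k] \in \mathcal{A}_\alpha$, put $\nu_0(A) := \sum_{k=1}^{n} \lambda(a_k, b_k)$, interpreting $\lambda(\alpha, b)$ and $\lambda(a, \infty)$ via the limits defined just before the lemma. Well-definedness (independence of the partition) follows from (5.31 iii) by the usual common-refinement argument, and finite additivity on $\mathcal{A}_\alpha$ is then immediate.

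The crux is $\sigma$-additivity of $\nu_0$, which by finite additivity reduces to showing that whenever $]a,b] = \bigsqcup_{n \geq 1}\,]a_n, b_n]$ with $\alpha \leq a < b \leq \infty$, one has $\lambda(a,b) = \sum_n \lambda(a_n, b_n)$. The inequality $\lambda(a,b) \geq \sum_n \lambda(a_n, b_n)$ follows from finite additivity. For the reverse, I would first assume $\lambda(a,b) < \infty$ and exploit the right continuity conditions. By (5.31 iii) and right continuity of $a' \mapsto \lambda(a', b)$ (interpreted via the limit definition when $a = \alpha$), $\lambda(a, a') \to 0$ as $a' \downarrow a$, so given $\varepsilon > 0$ I may pick $a' \in \,]a, b[$ with $\lambda(a', b) > \lambda(a,b) - \varepsilon$; similarly, by (5.31 i) pick $b_n' > b_n$ with $\lambda(a_n, b_n') < \lambda(a_n, b_n) + \varepsilon\,2^{-n}$. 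Then the compact set $[a', b]$ (or $[a', M]$ for a large finite $M$ when $b = \infty$, letting $M \uparrow \infty$ afterwards) is covered by the open intervals $]a_n, b_n'[\,$; extracting a finite subcover and applying finite subadditivity of $\nu_0$ yields $\lambda(a', b) \leq \sum_n \lambda(a_n, b_n) + \varepsilon$, and $\varepsilon \downarrow 0$ gives the bound. The case $\lambda(a,b) = \infty$ is handled by contradiction: if $\sum_n \lambda(a_n, b_n) < \infty$, the same compactness argument shows $\lambda(a', b) < \infty$ for every $a' > a$, and then (5.31 ii) forces $\lambda(a,b) = \lim_{a' \downarrow a} \lambda(a', b) < \infty$. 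The main obstacle is the bookkeeping for endpoints at $\alpha$ or $\infty$, which I would handle by inner approximation and the limit definitions of $\lambda$.

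Once $\sigma$-additivity is established, Carathéodory's theorem delivers a measure $\nu$ on $\mathcal{B}_\alpha = \sigma(\mathcal{A}_\alpha)$ extending $\nu_0$. For the uniqueness-type assertion, I would use the outer-measure representation $\nu(B) = \inf\{\sum_n \nu_0(A_n) : B \subset \bigcup_n A_n,\ A_n \in \mathcal{A}_\alpha\}$ supplied by the Carathéodory construction. For any such covering, countable subadditivity of $\overline{\nu}$ together with the hypothesis $\overline{\nu} = \nu_0$ on $\mathcal{A}_\alpha$ gives $\overline{\nu}(B) \leq \sum_n \nu_0(A_n)$, and taking the infimum yields $\overline{\nu}(B) \leq \nu(B)$.

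For equality when $\nu(B) < \infty$, I would choose $(A_n)$ with $\sum_n \nu_0(A_n) < \nu(B) + \varepsilon$ and set $C := \bigcup_n A_n$. Continuity from below applied to the partial unions $\bigcup_{k \leq N} A_k \in \mathcal{A}_\alpha$ shows $\nu(C) = \overline{\nu}(C)$, and $\nu(C \setminus B) = \nu(C) - \nu(B) < \varepsilon$. Hence $\nu(B) \leq \nu(C) = \overline{\nu}(C) = \overline{\nu}(B) + \overline{\nu}(C \setminus B) \leq \overline{\nu}(B) + \varepsilon$, and $\varepsilon \downarrow 0$ gives $\nu(B) \leq \overline{\nu}(B)$, hence equality. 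The final sentence of the lemma is immediate since every element of $\mathcal{A}_\alpha$ is a finite disjoint union of elements of $\mathcal{H}_\alpha$.
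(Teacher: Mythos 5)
Your proof is correct and follows essentially the same route as the paper: extend $\lambda$ additively to the algebra $\mathcal{A}_\alpha$, prove countable additivity on a single interval by enlarging the $I_j$ on the right via right continuity in the second variable, shrinking $I$ on the left via right continuity in the first, and running the compact-cover argument (with the $\lambda(a,b)=\infty$ case and the endpoints $a=\alpha$, $b=\infty$ handled by the same limiting/inner-approximation devices), then invoke Carath\'eodory. The only difference is that for the final comparison assertion about $\overline{\nu}$ the paper simply cites Folland's Theorem 1.14, whereas you reproduce its standard proof via the outer-measure representation and continuity from below on $C=\cup_n A_n$; that argument is valid.
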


\begin{proof} If $I=]a,b] \in \mathcal{H}_\alpha$, define $\nu(I) = \lambda(a,b)$ and observe that $\nu$ is finitely additive on $\mathcal{H}_\alpha$ and monotone increasing on $\mathcal{H}_\alpha$ in view of the conditions \textit{(i)---(iii)} in (5.31). Extend $\nu$ to $\mathcal{A}_\alpha$ by defining $\nu(A) = \sum_{j=1}^n \nu(I_j)$ when $A$ is  the finite disjoint union of $I_j \in \mathcal{H}_\alpha$, $i \le j \le m$. If $A$ may also be expressed as the disjoint union of $J_k \in \mathcal{H}_\alpha, 1 \le k \le m$, then using the finite additivity of $\nu$ on $\mathcal{H}_\alpha$ we have 

\[ \nu(A) = \sum_j \nu(I_j) = \sum_{j,k} \nu (I_j \cap J_k) = \sum_k \nu(J_k) \]

\noindent and so $\nu$ is well defined on $\mathcal{A}_\alpha$. It is evident that $\nu$ is finitely additive on $\mathcal{A}_\alpha$. It is a standard fact that $\nu$ extends to a measure on $\mathcal{B}_\alpha = \sigma(\mathcal{A}_\alpha)$ if and only if $\nu$ is countably additive on $\mathcal{A}_\alpha$. See Theorem 1.14 in [Fo84], for example. The extension is constructed via the familiar Caratheodory extension theorem. That is one first defines an outer measure $\nu^*$ on all subsets $E \subset H_\alpha$ by 

\[ \nu^*(E) = \inf\{ \sum_{j=1}^\infty \nu(A_j): A_j \in \mathcal{A}_\alpha, E \subset \cup_{j=1}^\infty A_j\} \]

\noindent and then shows that the restriction of $\nu^*$ to $\mathcal{B}_\alpha = \sigma(\mathcal{A}_\alpha)$ is a measure on $\mathcal{B}_\alpha$. 

Thus we must show that $\nu$ is countably additive on $\mathcal{A}_\alpha$. Since $\nu$ is finitely additive on $\mathcal{A}_\alpha$ it suffices to show that if $(I_j)$ is a disjoint sequence in $\mathcal{H}_\alpha$ with $A = \cup I_j$, then $\nu(A) = \sum_j \nu(I_j)$. Since $A \in \mathcal{A}_\alpha$, $A$ is a finite union of disjoint elements of $\mathcal{H}_\alpha$. Hence by the finite additivity of $\nu$ on $\mathcal{H}_\alpha$, it suffices to consider the case $\cup_{j=1}^\infty I_j = I \in \mathcal{H}_\alpha$. Since $\nu$ is increasing on $\mathcal{H}_\alpha$, $\nu(I) \ge \sum_{j=1}^n \nu(I_j)$ and letting $n \rightarrow \infty$ we obtain $\nu(I) \ge \sum_{j=1}^\infty \nu(I_j)$. In checking the opposite inequality we suppose that $\nu(I_j) <\infty$ for all $j$ since it is obvious if $\nu(I_j) = \infty$ for some $j$. Consider first the case $I = ]a,b]$ with $ \alpha < a < b < \infty$. Fix $\epsilon >0$. Suppose first that there exists $c$, $a<c<b$ with $ \lambda(a,c)<\infty$. By the right continuity of $\lambda$ in its first variable we may choose $\delta$ with $0 < \delta < c-a$ so that $ \lambda(a, a + \delta) < \epsilon$. If $I_j = ]a_j,b_j]$, then because $\nu(I_j) < \infty$ and the right continuity of $\lambda$ in its second variable, for each $j$ there exists $\delta_j > 0$ such that $ \lambda(a_j, b_j + \delta_j) - \nu(I_j) < \epsilon 2^{-j}$. Then the open intervals $]a_j, b_j+\delta_j[$ cover the compact interval $[a+\delta, b]$ because $]a,b] = \cup_j ]a_j,b_j]$ is only possible if $b_j =b$ for some $j$. Choose a finite subcover and discard any $]a_j, b_j +\delta_j]$ that is contained in a larger one. Relabeling the intervals of the subcover we may suppose that 
\textit{(i)} the intervals $]a_1, b_1+\delta_1] \dots ]a_n, b_n + \delta_n]$ cover $[a+\delta, b],$ 
\textit{(ii)} $a_1 < a_2 < \dots a_n$, and 
\textit{(iii)} $b_j + \delta_j \in ]a_{j+1}, b_{j+1} + \delta_{j+1}[$ for $i \le j < n-1$. The conditions 
\textit{(ii)} and \textit{(iii)} hold because the $I_j$ are disjoint. Then since $a_1 < a+ \delta$ and $  b< b_n + \delta_n$ for $1 \le j \le n-1$ we have

\begin{align*} \nu(I)   \le \lambda(a+ \delta, b) + \epsilon & \le \lambda(a_1, b_n + \delta_n) + \epsilon \\
 &\le \sum_{j=1}^{n-1} \lambda(a_j, b_j+\delta_j) + \lambda (a_n, b_n + \delta_n) + \epsilon\\
 &\le \sum_{j=1}^n ( \nu(I_j) + \epsilon 2^{-j}) + \epsilon \le \sum_{j=1}^n \nu(I_j) + 2 \epsilon\\ 
 &\le \sum_{j=1}^\infty \nu(I_j) + 2\epsilon \end{align*}

\noindent Since $\epsilon >0$ is arbitrary we obtain $\nu(I) \le \sum_{j=1}^\infty \nu(I_j)$ in this case. \\

Next, still assuming $I = ]a,b]$, $\alpha < a < b < \infty$, suppose that $\lambda(a,c) = \infty$ for all $c \in ]a,b[.$ Then $\lambda(a,b) \ge \lambda(a,c) = \infty$, $a<c<b$. In particular $\nu(I) = \lambda(a,b) = \infty$ and so we must show that $\sum_j \nu(I_j) = \infty$. In this case, $\lambda(a+\delta, b) \uparrow \infty$ as $\delta \downarrow 0$ by (5.31-\textit{ii}). Fix $ 0 < \delta < b-a$ and $\epsilon >0$. Let the $\epsilon_j, j\ge 1$ be chosen as before. Once again the open intervals $]a_j, b_j+\delta_j[$ cover the compact interval $[a+\delta, b]$. We choose a finite subcover and arrange the notation as before so that the conditions \textit{i ---\ iii} above hold. Then

\begin{align*} \lambda(a+ \delta, b)  \le &  
 \sum_{j=1}^n \lambda(a_j, b_j+\delta_j)   \le  \sum_{j=1}^n (\nu(I_j) + \epsilon 2^{-j}) \\
 \le & \sum_{j=1}^n \nu(I_j) + \epsilon  
\le   \sum_{j = 1}^{\infty} \nu(I_j) + \epsilon \end{align*}

and since $\epsilon >0$ is arbitrary 
\[ \lambda(a+\delta, b) \le \sum_{j=1}^{\infty} \nu(I_j).\]

\noindent Letting $\delta \downarrow 0$ forces $\sum_{j=1}^\infty \nu(I_j) =\infty$, establishing (5.32) whenever $I = ]a,b]$ and $\alpha < a< b <\infty$. Finally if $b < \infty$ and $I = ]\alpha, b]$ is the disjoint union of $(I_j, j \ge 1) \subset \mathcal{H}_\alpha$ then $I \cap ]c,b]$ is the disjoint union of $I_j \cap ]c,b]$ if $\alpha < c <b$. Then by the previous case

\[ \nu(]c,b]) = \sum_j \nu(I_j \cap ]c,b]) \le \sum_j \nu(I_j) \]

\noindent and so 

\[ \nu(I) = \lim_{c \downarrow \alpha} \lambda(c,b) = \lim_{c\rightarrow \infty} \nu(]c,b]) \le \sum_j \nu(I_j).\]

\noindent Similarly we obtain the same result if $I =]a, \infty[, \alpha <a$. Thus the existence of $\nu$ is established in all cases. 

We have based the existence of $\nu$ on Theorem 1.14 in [Fo84]. But the last assertion in (5.32) is part of the conclusion of the result just cited, completing the proof of Lemma 5.32.
\end{proof}

Before returning to the proof of (5.28), we formulate a well-known result in measure theory as a lemma. For completeness we shall furnish a proof. \\

\begin{lemma}

Let $H \in \mathcal{G}_{t-}^*$ (resp. $\mathcal{G}_{>t}^*)$ with $H([\Delta]) =0$. Then $H \circ k_t = H$ and $H \circ b_t =0$ (resp. $H \circ b_t = H $ and $H \circ k_t = 0)$. 

\end{lemma}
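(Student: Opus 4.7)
The plan is to establish both identities for $H \in b\G_{t-}^0$ by the monotone class theorem and then lift to the universal completion by a Dirac-measure argument. I write out only the $\G_{t-}^*$ case, since the $\G_{>t}^*$ case is identical after interchanging the roles of $k_t$ and $b_t$ and using $Y_s \circ b_t = Y_s$, $Y_s \circ k_t = \Delta$ for $s > t$ in place of the symmetric identities below.

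The driving observation is that $(k_t w)(s) = w(s)$ and $(b_t w)(s) = \Delta$ for every $s < t$; equivalently, $Y_s \circ k_t = Y_s$ and $Y_s \circ b_t = \Delta = Y_s([\Delta])$ identically on $W$ whenever $s < t$. Let $\H_0$ be the class of finite products $\prod_j f_j \circ Y_{s_j}$ with $s_j < t$ and $f_j \in b\E_\Delta$, which is closed under products and generates $\G_{t-}^0$. For each $H \in \H_0$ one computes directly that $H \circ k_t = H$ and $H \circ b_t = \prod_j f_j(\Delta) = H([\Delta])$. The class of $H \in b\G_{t-}^0$ satisfying both identities forms a vector space containing the constants and closed under uniformly bounded monotone limits, so the monotone class theorem (2.18) extends the identities to all of $b\G_{t-}^0$; when $H([\Delta]) = 0$ this yields $H \circ k_t = H$ and $H \circ b_t = 0$.

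To lift to $\G_{t-}^*$, fix $w \in W$ and regard the Dirac masses $\delta_w$ and $\delta_{k_t w}$ as finite measures on $(W, \G_{t-}^0)$. Since $w$ and $k_t w$ agree on $(-\infty, t)$, they lie in the same $\G_{t-}^0$-atoms, so $\delta_w$ and $\delta_{k_t w}$ agree on $\G_{t-}^0$. Given $H \in b\G_{t-}^*$, let $H_1^w \le H \le H_2^w$ and $H_1' \le H \le H_2'$ be $\G_{t-}^0$-sandwiches furnished by the $\delta_w$- and $\delta_{k_t w}$-completions, respectively, with $\delta_w(H_2^w - H_1^w) = 0 = \delta_{k_t w}(H_2' - H_1')$. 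Setting $H_1 := H_1^w \vee H_1'$ and $H_2 := H_2^w \wedge H_2'$ yields a common sandwich $H_1 \le H \le H_2$ in $b\G_{t-}^0$ with both $\delta_w(H_2 - H_1) = 0$ and $\delta_{k_t w}(H_2 - H_1) = 0$. Since the Dirac measures are point masses, these equalities force $H_1(w) = H_2(w) = H(w)$ and $H_1(k_t w) = H_2(k_t w) = H(k_t w)$; combining this with $H_1 \circ k_t = H_1$ from the previous step gives $H(w) = H_1(w) = H_1(k_t w) = H(k_t w)$. The same argument applied to $\delta_{b_t w}$ and $\delta_{[\Delta]}$ — which agree on $\G_{t-}^0$ because $b_t w$ and $[\Delta]$ are both constantly $\Delta$ on $(-\infty, t)$ — yields $H(b_t w) = H([\Delta]) = 0$, so $H \circ b_t = 0$.

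The main obstacle is this last step, since the required identities are pointwise while the universal completion is defined only modulo null sets of each individual finite measure. The Dirac-measure device circumvents this by specializing the test measure to the very point at which the identity is being checked, exploiting that Dirac masses at two points agreeing on the $\G_{t-}^0$-atoms are indistinguishable on $\G_{t-}^0$.
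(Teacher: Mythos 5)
Your proof is correct and follows essentially the same route as the paper's: reduce to the generating class where $H\circ k_t=H$ and $H\circ b_t=H([\Delta])$ hold identically, then use the universal-completion sandwich $H_1\le H\le H_2$ with $H_1,H_2\in\G^0_{t-}$ evaluated against Dirac measures to obtain the pointwise identities. The only cosmetic differences are that the paper works first with $\G^0_s$, $s<t$, and then passes to $\G^*_{t-}=\bigl(\sigma\bigcup_{s<t}\G^0_s\bigr)^*$, and uses a single Dirac measure $\varepsilon_w$ per point (bounding $H\circ k_t$ directly between $H_1$ and $H_2$) rather than your pair of Dirac masses.
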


\begin{proof} Let $H \in \mathcal{G}^*_{t-}, s<t$ and $P$ a probability on $( \Omega, \mathcal{G}^0)$. If $H \in \mathcal{G}^*_s$, then there exist $H_1, H_2 \in \mathcal{G}_s^0$ with $H_1 \le H \le H_2$ and $P(H_1 <H_2) =0$. Since $t>s$, $H_ j \circ k_t = H_j$ for $j=1,2$. Hence $H_1 \le H \circ k_t \le H_2$ and so $P(H \circ k_t = H)=1$.  But $H_j \circ b_t = H_j([\Delta])$ and if $H_1([\Delta]) < H_2 ( [\Delta])$, then $P( \{[\Delta] \}) =0$. Moreover $H_1([\Delta]) \le H([\Delta]) =0$. Consequentially $P(H \circ b_t) =0$. Since $(W, \mathcal{G}^0)$ is a coSouslin measurable space according to (2.16), $
\{w \} \in \mathcal{G}^0$ for $w \in W$. Fix $w \in W$.  Then $P^w = \epsilon_w$ is a probability on $(\Omega, \mathcal{G}^0)$ and taking $P = P^\omega$ we find that $H(k_t w) = H(w)$ and $H(b_t w) =0$. But this holds for all $w \in W$ and so $H \circ k_t = H$ and $H \circ b_t =0$ if $H \in \mathcal{G}_s^*, s<t$. Now $\mathcal{G}_{t-}^* = (\mathcal{G}_{t-}^0)^* = ( \sigma \cup_{s<t} \mathcal{G}_s^0)^* \subset \sigma \cup_{s<t} \mathcal{G}_s^*$---in fact the last inclusion is an equality. Hence $H \circ k_t = H$ and $H \circ b_t = 0$ when $H \in \mathcal{G}_{t-}^*$. The other cases are treated similarly. \end{proof}

We now return to the proof of $(5.28)$. Recall that $L = (L_t(\omega), t \ge 0)$ is a process $P^m$ indistinguishable from $A_t = \kappa (] 0,t]|_\Omega)$ satisfying the conclusion of Theorem 5.27 and that $t \rightarrow L_t$ is $P^m$ a.s. continuous on $[0, \infty[$. Moreover if $\alpha(w) \le a < \beta(w)$, then $\alpha(\theta_a w)=0$ so that $\theta_a w \in \Omega$. If $a \ge \beta(w)$, clearly $\theta_a w(s) = \Delta$ for all $s \ge 0$ so that $\theta_a w [\Delta] \in \Omega$. Then the following definition makes sense for all $w \in W$ and $-\infty < a < b < \infty$. 

\pagebreak

\setcounter{equation}{33}

\begin{equation} \quad \lambda(a,b,w)  := L_{b-a}(\theta_a w) \quad \mathrm{if} \ \alpha(w) \le a\end{equation} \begin{equation} \notag \ \ \ \ \ :=0 \quad \mathrm{if} \ a < \alpha(w). \end{equation}

\noindent If $a \ge \beta(w)$, then $\theta_a w = [ \Delta]$ and so from (5.27 \textit{iv}), $\lambda(a,b,w) =0$ when $a \ge \beta(w)$ as well as when $\alpha(w) <a$. Observe that for each $w \in W$, (5.27 \textit{iv}) implies that $\lambda $ with $\alpha = \alpha(w)$ satisfies \textit{(i)} and \textit{(ii)} of (5.31), while (5.27 \textit{vi}) implies that $\lambda$ satisfies \textit{(iii)} of (5.31).  Thus, writing $\lambda(w)$ for the function $(a,b) \rightarrow \lambda(a,b,w), \alpha(w) < a < b < \infty$, then for each $w \in W$, $\lambda(w)$ satisfies the hypotheses of Lemma 5.32. Therefore $\lambda(w)$ induces a measure $\overline{\kappa}(w)= \overline{\kappa}(w, \cdot)$ on $]\alpha(w), \infty[$ for each $w \in W$. From (5.27 \textit{iv}), $L_t([\Delta]) =0$ for $t \ge 0$ and so $\overline{\kappa}([\Delta]) =0$. Since $\theta_a w = [\Delta]$ if $a \ge \beta(w)$, $\overline{\kappa}$ is carried by $] \alpha, \beta [$. It is convenient to regard $\overline{\kappa}$ as a measure on $\mathbb{R}$ that is carried by $] \alpha, \beta[$. Then 

\begin{align} \notag Q_m (\overline{\kappa}]a,b] & = \infty) = Q_m [(L_{b-a}(\theta_a) = \infty) = \infty, \alpha < a < \beta] \\ \notag &= Q_m [P^{Ya} (L_{b-a} = \infty) ] = P^m(L_{b-a} = \infty) = 0 \end{align}

\noindent provided $a,b \in \mathbb{R}$ and $a<b$. Consequently a.s. $Q_m, \overline{\kappa}$ is a Radon measure on $(\mathbb{R}, \mathcal{B})$. But $t \rightarrow \overline{\kappa} ]a,t] = L_{t-a}(\theta_a)$ is continuous on $]a, \infty[$ provided $a > \alpha$. Therefore $Q_m$ a.s, $\overline{\kappa}$ is a diffuse Radon measure. Finally note that (5.27 \textit{v}) implies that $\overline{\kappa}]a,b] \in \mathcal{G}^*_{>a} \cap \mathcal{G}^*_{b+} \subset \mathcal{G}^*$. \\

Let $V(w,a,b)$ be the indicator of the set $\{ w,a,b: \alpha(w) < a<b< \beta(w)\} \subset W \times \mathbb{R} \times \mathbb{R}$. Then 

\setcounter{equation}{34}
\begin{equation} \overline{\kappa}( \sigma_tw, ]a,b])  = L_{b-a} (\theta_a \sigma_t w) V(\sigma_t w, a,b)\end{equation}
\begin{equation} \notag \qquad \ \qquad \qquad \qquad \qquad = L_{b+t-(a+t)}(\theta_{a+t}w) V (w, a+t, b+t) \end{equation}
\begin{equation} \notag \ \ \ \ \ = \overline{\kappa}(w, ]a,b]+t) \end{equation}

\noindent where the second equality follows because $\alpha(\sigma_tw) = \alpha(w)-t$ and $\beta(\sigma_t w) = \beta(w) -t)$. Define 

\begin{equation} G_0 := \{ w: \overline{\kappa} (w) \ \mathrm{ \ is \ a \ Radon \ measure} \} = \bigcap_n \{w: \overline{\kappa}(w, ]-n,n]) < \infty \} \in \mathcal{G}^* \end{equation}
\noindent and using (5.35)

\[ G_0(\sigma_t w) = \bigcap_n \{ w: \overline{\kappa}(w, ]-n+t, n+t]) < \infty\} = G_0 (w). \]

\noindent Hence $G_0 \in \mathcal{G}^*$ and $\sigma_t^{-1} G_0 = G_0$ for $t \in \mathbb{R}$. Next define

\begin{equation}   G  = \{ w \in G_0: \overline{\kappa}(w, \cdot)  \ \mathrm{is \ diffuse} \} \end{equation}
\noindent If $w \in G_0$ and $a \in \mathbb{R}$, define $f^a(t,w) := \overline{\kappa}(w, ]a,t])1_{]\alpha(w), \infty[}(a)$. Then $f^a(\cdot, w)$ is right continuous, finite,and increasing on $\mathbb{R}$. Set $f_{-}^a(t,w) = f^a(t-,w)$ for $w \in G_0$, so that $f_{-}^a(\cdot, w)$ is left continuous on $\mathbb{R}$. Both $f^a(t, \cdot)$ and $f_{-}^a(t, \cdot)$ are $\mathcal{G}^*|_{G_0} \subset \mathcal{G}^*$ measurable. Consequentially both $f^a$ and $f_{-}^a$ are $\mathcal{B}\times \mathcal{G}^*$ measurable and so $\Gamma := \{ (t,w) : f^a(t,w) \neq f_{-}^a(t,w)\} \in \mathcal{B} \times \mathcal{G}^*$. 

\noindent Observe that

\[ G^a := \{ w \in G_0 : f^a(\cdot, w) \mathrm{ \ is \ not \ continuous} \}\] is the projection of $\Gamma$ on $W$. If $P$ is a probability on $(W, \mathcal{G}^*)$, it follows from [DM III, 13 and 33] that $G^a\in \overline{\mathcal{G}^*}^P$, and since $P$ is arbitrary $G^a \in \mathcal{G}^*$. Let $a \downarrow - \infty $ through a sequence and take the complement of the resulting set to obtain $G \in \mathcal{G}^*$ where $G$ is defined in (5.37). Next observe that (5.35) and the fact that $\sigma_t^{-1}G_0 = G_0$ imply that $\sigma_t^{-1}G = G$. \\

Finally define 
\begin{equation}\widetilde{\kappa}(w, \cdot) := 1_G(w) \overline{\kappa}(w,\cdot). \end{equation}

\noindent From the above $\widetilde{\kappa}(w,\cdot)$ is a diffuse Radon for each $w$ and $\widetilde{\kappa}$ is a kernel from $(W, \mathcal{G}^*)$ to $(\mathbb{R}, \mathcal{B})$. Moreover $\widetilde{\kappa}([\Delta])=0$. Also (5.35) and the fact that $\widetilde{\kappa}(w,\cdot)$ is Radon imply that

\begin{equation} \widetilde{\kappa}(\sigma_t w, B) = \widetilde{\kappa} (w, B+t) \end{equation}

\noindent for each $w \in W, t \in \mathbb{R}$ and $B \in \mathcal{B}$. We next claim that $\kappa$ and $\widetilde{\kappa}$ are $Q_m$-indistinguishable. It was noted in the first paragraph of the proof of (5.28) that if $B \in \mathcal{B}$, $B \subset ]0,\infty[$, $\kappa (B) \circ \sigma_a = \kappa(B) \circ \theta_a$, a.s. $Q_m$. Therefore a.s. $Q_m$, if $a<b$
\begin{align*} \qquad \qquad \qquad \kappa (]a,b]) = &\kappa (]0,b-a]) \circ \sigma_a = \kappa(]0, b-a])\circ \theta_a\\  
= & A_{b-a} \circ \theta_a, \end{align*}

\noindent and so 

\begin{equation}Q_m(\kappa (]a,b])  \ne \overline{\kappa} (]a,b]); \alpha < a) \end{equation}
\begin{equation} \notag \qquad\qquad \qquad \qquad \ \qquad \ \ \ \  \ = Q_m (A_{b-a}\theta_a \ne L_{b-a} \circ \theta_a ; \alpha <a ) \end{equation}
\begin{equation} \notag \qquad \qquad \qquad\qquad  \ = P^m (A_{b-a} \ne L_{b-a}) = 0. \end{equation}

\noindent Since $\kappa$ is a Radon measure a.s. $Q_m$, it follows from (5.36) and (5.40), noting that both $\kappa$ and $\overline{\kappa}$ are carried by $]\alpha,\beta[ \ Q_m$ a.s., that $\overline{\kappa}$ is also a Radon measure a.s. $Q_m$. In particular, $Q_m(G_0^c) =0$. Moreover since $\mathcal{B}$ is countably generated, (5.40) now implies that $\kappa$ and $\overline{\kappa}$ are $Q_m$-indistinguishable. But $\kappa$ is diffuse a.s. $Q_m$ and hence so is $\overline{\kappa}$. Consequently, $Q_m(G^c)=0$ and so $\kappa$ and $\widetilde{\kappa}$ are $Q_m$-indistinguishable. In particular $\mu_{\widetilde{\kappa}} = \mu_\kappa = \mu$. We drop the tilde ``$\sim$'' in our notation and write $\kappa$ for $\widetilde{\kappa}$. \\

To summarize what has been established so far: Given a finite measure, $\mu$ not changing $m$-semipolars there exists an OCHRM, $\kappa$ with $\mu_\kappa = \mu$ such that $\kappa$ has the properties 
\textit{(i)} and \textit{(ii)} of Theorem 5.28, $\kappa(w)$ is diffuse Radon measure for every $w \in W$ and $\kappa(]a,b]) \in \mathcal{G}^*_{>a} \cap \mathcal{G}^*_{b+}$ for $-\infty < a < b < \infty$. We are next going to check that $\kappa$ also satisfies \textit{(iii)} and \textit{(iv)} of (5.28). \\

To this end observe that if $-\infty < u < v < \infty$, then $\kappa(]u,v[) \in \mathcal{G}^*_{>u} \cap\mathcal{G}^*_{v-}$. Therefore in view of Lemma 5.33, $\kappa(]u,v[)  \circ b_u = \kappa(]u,v[) $ and $\kappa(]u,v[) \circ b_v =0$. Thus if $t \le u$ or $t \ge v$, $\kappa(]u,v[)  \circ b_t = \kappa(]u,v[ \cap ]t,\infty[)$. If $u<t<v$, $\kappa(]u,v[) = \kappa(]u,t]) + \kappa(]t,v[)$. But $\kappa$ being diffuse, $\kappa(]u,t]) = \kappa(]u,t[)$ and so from the above $\kappa(]u,v[)\circ b_t = \kappa(]t,v[)= \kappa(]u,v[ \cap ]t, \infty[)$. Since $\kappa(b_t,B)$ and $\kappa(B\cap ]t,\infty[)$ are diffuse Radon measures in $B$ that agree when $B = ]u,v[$, they agree for all $B \in \mathcal{B}$. Thus $\kappa$ satisfies (5.28 \textit{iii}). A similar argument yields (5.28 \textit{iv}). \\

Now suppose that $\mu$ is a $\sigma$-finite measure not charging $m$-semipolars. Let $f \in \mathcal{E}$ with $f>0$. and  $\mu(f) < \infty$. Then $\mu_f(dx):=f(x)\mu(dx)$ is finite and does not charge $m$-semipolars. By what has been established so far there exist an OCHRM, $\kappa_f$, satisfying \textit{(i)---(iv)} of (5.28) with $\mu_{\kappa_f} = \mu_f$ such that for each $w$, $\kappa_f(w, \cdot)$ is diffuse Radon measure on $(\mathbb{R},B)$.  Define $\kappa(w,dt):=(f \circ Y_t(w))^{-1} \kappa_f(w,dt)$. It is easy to check that $\kappa$ is an OCHRM satisfying \textit{(i)---(iv)} of (5.28) such that each $w$, $\kappa(w, \cdot)$ is a diffuse $\sigma$-finite measure on $(\mathbb{R},\mathcal{B})$. Clearly $\mu_\kappa = \mu$. Of course, $\kappa(w, \cdot)$ is not a Radon measure in general. \\

Next let $\kappa$ be an arbitrary OCHRM. By (5.11) and (5.12i), $\mu_k$ is $\sigma$-finite. Let ``$\sim$'' denote the equivalence relation of $Q_m$ indistinguishability. By (5.25), $\kappa \sim \kappa_j + \kappa^c$, where $\kappa^c$ is diffuse and $\kappa_j = \sum_t j(Y_t^*)\epsilon_t$ with $j \in p \mathcal{E}$ and $\{j>0\}$ is $m$-semipolar. Note that if $\kappa^d$ is the discrete part of $\kappa$, we only have $\kappa^d\sim\kappa_j$. Now $\mu_{\kappa^c}$ is $\sigma$-finite and doesn't charge $m$-semipolars. Therefore there exists an OCHRM, $\widetilde{\kappa}$ 
with $\mu_{\widetilde{\kappa}} =
 \mu_{\kappa^c}$ and $\widetilde{\kappa}$ 
satisfies \textit{(i)---(iv)} of (5.28) and $\widetilde{\kappa}(w,
\cdot)$ is diffuse for every $w$. But (5.12 \textit{ii}) states that $\kappa^c
\sim \widetilde{\kappa}$ and so $\kappa \sim \kappa_j +
\widetilde{\kappa}$. On the other hand it is easy to check that
$\kappa_j$ satisfies \textit{(i)---(iv)} of (5.28). Thus $\kappa^* := \kappa_j
+ \widetilde{\kappa} \sim \kappa$ and $\kappa^*$ has the properties
\textit{(i)---(iv)} of (5.28). \\

It remains to show that $j$ may be chosen so that $\{j>0\}$ is semipolar rather than $m$-semipolar. This important improvement comes from [FG 03]. It depends on the following result of Dellacherie [D 88, p.70]. Let $\rho$ be a $\sigma$-finite measure on $E$. Recall that $A \subset E$ is $\rho$-semipolar provided $A \subset B \in \mathcal{E}$ and $\{ t \ge 0: X_t \in B\}$ is countable a.s. $P^\rho$. Then a $\rho$-semipolar set $A$ has the form $A = A_1 \cup A_2$ with $A_1$ $m$-polar and $A_2$ semipolar. In fact, this  is Dellacherie's definition of a $\rho$-semipolar set; its equivalence with the $P^\rho$ a.s. countability of $\{t: X_t \in A\}$ is what he proves. I am going to abandon my desire to prove all results not available in books in this instance, and just refer the reader to [De 88]. We need a lemma which is of considerable interest in its own right. Recall that $\mathcal{N}(m)$ denotes the $m$-exceptional sets. \\

\setcounter{theorem}{40}

\begin{lemma} A Borel $m$-semipolar set is the disjoint union of a Borel semipolar set and a Borel set in $\mathcal{N}(m)$. In particular, a Borel $m$-polar set is the disjoint union of a Borel semipolar set and a Borel set in $\mathcal{N}(m)$. \end{lemma}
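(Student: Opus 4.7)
The plan is to invoke Dellacherie's theorem applied to the measure $\rho$ from the Riesz decomposition $m = \eta + \rho U$, and to bridge his framework with ours via the correspondence between $Q_\pi$ (with $\pi := \rho U$) and $P^\rho$. Two translations are needed: a Borel $m$-semipolar set must first be shown to be $\rho$-semipolar in Dellacherie's sense so that his theorem applies, and the Borel $m$-polar piece it produces must be shown to be $\rho$-negligible so that it actually lands in $\N(m) \cap \E$. Both translations rest on the same identity.

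To set up that identity I would combine Theorem 2.13 with [G;\,6.20], already cited in the proof of (2.14), giving $Q_\pi(Y_\alpha^\ast \in dx;\, 0 < \alpha < 1) = \rho(dx)$; together they yield $Q_\pi(F \circ \theta_\alpha;\, 0 < \alpha < 1) = P^\rho(F)$ for every $F \in p\G^0$. Since $\{t: Y_t \in B\}$ coincides $Q_\pi$-a.s.\ with $\{t > \alpha: Y_t \in B\}$, which in turn is $\{s > 0 : X_s(\theta_\alpha\,\cdot) \in B\}$, this identity converts statements about the $Q_\pi$-distribution of $\{t: Y_t \in B\}$ into statements about the $P^\rho$-distribution of $\{s \geq 0: X_s \in B\}$ (adjoining $s = 0$ is harmless for emptiness or countability).

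Given the bridge, the proof runs as follows. Let $B$ be Borel $m$-semipolar; then $\{t: Y_t \in B\}$ is countable $Q_m$-a.s., hence $Q_\pi$-a.s., so by the bridge $\{s \geq 0: X_s \in B\}$ is countable $P^\rho$-a.s., i.e.\ $B$ is $\rho$-semipolar in Dellacherie's sense. Dellacherie's theorem furnishes $B = B_1' \cup B_2'$ with $B_1'$ Borel $m$-polar and $B_2'$ semipolar; I would enlarge $B_2'$ to a Borel semipolar set $\widetilde{B}_2 \subset E$, put $B_2 := \widetilde{B}_2 \cap B$ and $B_1 := B \setminus B_2 \subset B_1'$, producing disjoint Borel sets $B_1, B_2$ with $B = B_1 \sqcup B_2$, $B_2$ semipolar and $B_1$ Borel $m$-polar. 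For $B_1 \in \N(m) \cap \E$ it remains to show $\rho(B_1) = 0$: the bridge applied to $F := 1_{\{T_{B_1} < \infty\}}$ converts $Q_\pi(\{t: Y_t \in B_1\} \neq \emptyset) = 0$ (which follows from the $m$-polarity of $B_1$) into $P^\rho(T_{B_1} < \infty) = 0$, and taking $s = 0$ yields $\rho(B_1) = P^\rho(X_0 \in B_1) = 0$. The second sentence of (5.41) is then immediate since every $m$-polar set is trivially $m$-semipolar. The main obstacle, beyond invoking Dellacherie as a black box, is the careful justification of the $Q_\pi \leftrightarrow P^\rho$ identity via (2.13) and [G;\,6.20]; ensuring the decomposition is into Borel pieces is routine.
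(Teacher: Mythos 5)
Your bridge identity $Q_\pi(F\circ\theta_\alpha;\,0<\alpha<1)=P^\rho(F)$ is correct (it is the same computation as in the proof of (5.15), via (2.13) and [G;\,6.20]), and the first translation --- that a Borel $m$-semipolar set is $\rho$-semipolar in Dellacherie's sense --- is sound, since adjoining $s=0$ preserves countability. The gap is in the second half. Dellacherie's theorem applied with the measure $\rho$ to the $\rho$-semipolar set $B$ produces a \emph{$\rho$-polar} piece $B_1$ and a semipolar piece, not an $m$-polar piece (the general statement in the text reads ``$m$-polar'' where ``$\rho$-polar'' is meant, as its two applications in the proof make clear); and $B_1\subset B$ only gives that $B_1$ is $m$-\emph{semi}polar, so membership in $\N(m)$ does not follow. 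Conversely, if you read the output as an $m$-polar piece, your deduction of $\rho(B_1)=0$ fails: the bridge controls only $\{s>0: X_s\in B_1\}$, and adjoining $s=0$ is emphatically \emph{not} harmless for emptiness --- $P^\rho(T_{B_1}<\infty)=0$ says nothing about $P^\rho(X_0\in B_1)=\rho(B_1)$. An $m$-polar Borel set can perfectly well have $\rho(B_1)>0$ (take $m=\varepsilon_{x_0}U$ with $x_0$ a polar point, so $\rho=\varepsilon_{x_0}$, and $\{x_0\}$ is $m$-polar with $\rho(\{x_0\})=1$); this is precisely why $\N(m)\cap\E$ is a strictly smaller class than the Borel $m$-polar sets, cf. (2.14). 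A single application of Dellacherie therefore cannot deliver both properties ($m$-polarity and $\rho$-nullity) that the exceptional piece must have.

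The repair is to apply Dellacherie twice, which is what the paper does. First apply it with the measure $m$ itself to write the $m$-semipolar set as $A=B\cup A_2$ with $B$ Borel $m$-polar and $A_2$ Borel semipolar, disjoint. Then show that $B$ is $\rho$-semipolar --- your bridge does this, as does the paper's computation $0=P^{\rho U}[D_B<\infty]=\int_0^\infty P^\rho[D_B\circ\theta_t<\infty]\,dt$ followed by letting $t\downarrow 0$. Finally apply Dellacherie with $\rho$ to obtain $B=B_1\cup B_2$ with $B_1$ $\rho$-polar and $B_2$ semipolar: now $\rho(B_1)=0$ is immediate from $\rho$-polarity (the d\'ebut vanishes on $\{X_0\in B_1\}$), while the $m$-polarity of $B_1$ comes for free from $B_1\subset B$. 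Then $A=B_1\cup(B_2\cup A_2)$ is the desired decomposition, the union of two semipolar sets being semipolar.
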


We shall use (5.41) to complete the proof of (5.28), after which we shall prove (5.41). Now $\kappa_j = \sum_t j \circ Y_t^* \epsilon_t$ with $j \in p \mathcal{E}$ with $\{j>0\}$ $m$-semipolar. Then by (5.41), $\{j>0\} = A \cup B$ with $A,B \in \mathcal{E}$, $A \in \mathcal{N}(m)$, $B$ semipolar and $A \cap B = \emptyset$. Define $j':= j 1_B \in \mathcal{E}$, then $\{j'>0\} =B$ is semipolar. Let $\kappa_j' := \sum_t j'(Y_t^*)\epsilon_t$. Let $\phi\in p \mathcal{B}$ and $\int \phi(t)dt =1$. If $D \in \mathcal{E}$, then \[\mu_{\kappa_j}(D) = Q_m \int \phi(t)1_D (Y_t^*) \kappa_j'(dt)+ Q_m \int \phi(t) 1_{D \cap A} (Y_t^*) \kappa_j(dt).\]


\noindent But $A \in \mathcal{N}(m)$; hence $\{Y^* \in A \}$ is $Q_m$ evanescent. Therefore \hbox{$\mu_{\kappa_j} = \mu_{\kappa_j}'$} and so $\kappa_j \sim\kappa_j'$; hence $\kappa \sim \kappa_j' + \widetilde{\kappa}$ completing the proof of (5.28). \hspace{105 mm}$\square$

\begin{proof}[Proof of (5.41)] Let $A \in \mathcal{B}$ be an $m$-semipolar set. Then by Dellacherie's theorem $A= B \cup A_2$ with $B$ $m$-polar and $A_2$ semipolar. Using the fact that an $m$-polar set is contained in a Borel $m$-polar set and that an $m$-semipolar set is contained in a Borel $m$-semipolar set by definition, it is easy to see that if $A$ is Borel, then both $B$ and $A_2$ may be chosen Borel with $B \cap A_2 = \emptyset$. Let $m = \nu + \rho U$ be the Riesz decomposition of $m$ into harmonic and potential parts. Then $B$ is $\rho U$-polar. Let $D = D_B := \inf\{t\ge: X_t \in B\}$ denote the d\'ebut of $B$. Recall that for a $\sigma$-finite measure $\mu$, the phrase ``$B$ is $\mu$-polar'' means that $\mathbf{P}^\mu[D < \infty] =0$. Thus,

\[ 0 = \mathbf{P}^{\rho U} [D < \infty] = \int_0^\infty dt  \int \rho (dy) \int P_t (y,dx) P^x(D<\infty) \]

\[= \int_0^\infty \mathbf{P}^\rho [D \circ \theta_t < \infty]dt,\]

\noindent and so $\mathbf{P}^\rho[D \circ \theta_t < \infty] =0$ for (Lebesgue) a.e. $t>0$. But $\lim_{t \downarrow 0}  D \circ \theta_t = \inf\{t>0: X_t \in B \}$ and so $\{ t \ge 0: X_t \in B\} \subset \{0\}$, $\mathbf{P}^\rho$-a.s. In particular $B$ is $\rho$-semipolar. Using Dellacherie's theorem as above $B = B_1 \cup B_2$ with $B_1$ $\rho$-polar, $B_2$ semipolar and with $B_1$ and $B_2$ disjoint Borel sets. But $B_1$ being $\rho$-polar implies $\rho(B_1)=0$ and since $B_1 \subset B$, $B_1$ is $m$-polar; that is $B_1 \in \mathcal{N}(m)$. Finally $A = B_1 \cup B_2 \cup A_2$ establishing (5.41) since the union of two semipolar sets is semipolar.\end{proof}



\section*{Appendix to Section 5}

\numberwithin{equation}{section}
\renewcommand{\theequation}{A.\arabic{equation}}
\renewcommand{\thelemma}{A.\arabic{equation}}
\setcounter{equation}{0}

This appendix is devoted to a proof of Theorem 5.27. Thus let $A$ satisfy the hypotheses of (5.27). The first step is to replace $A$ with an indistinguishable process that is $(\mathcal{F}_{t+}^0)$ adapted for which (i) and (iii) of (5.27) continue to hold. \textit{In this appendix indistinguishable means $P^m$-indistinguishable unless explicitly stated otherwise.} To begin since $A_t \in \mathcal{F}_t^m$, there exists $A_t^0 \in \mathcal{F}_t^0$ with $P^m(A_t \ne A_t^0) =0$. But $A_t \in [0,\infty[$ and so we may suppose that $A_t^0 \in [0,\infty[$ and we may suppose that $A_t^0([\Delta]) =0$ since $A_t([\Delta]) =0$. During this proof $r,s,t,$ etc. denote elements of $\mathbb{R}^+ = [0,\infty[$. For $t \ge 0$, define $\widetilde{A}_t:=\inf \{ A_r^0 :r>t, r \in \mathbb{Q} \}$. Since $t \rightarrow A_t$ is increasing and right continuous on $[0,\infty[$, $\widetilde{A}_t \in \mathcal{F}^0_{t+}$ and $P^m(\widetilde{A}_t \ne A_t)=0$. But $\widetilde{A}_t$ is finite, increasing and right continuous by construction and so $A$ and $\widetilde{A}$ are indistinguishable. Since $A$ is constant on $[\zeta, \infty[$, one easily checks that $t \rightarrow \widetilde{A}_{t \wedge \zeta} = \widetilde{A}_t 1_{[0,\zeta[}(t)+\widetilde{A}_\zeta 1_{[\zeta, \infty[}(t) $ and $A$ are indistinguishable and that $\widetilde{A}_{t \wedge \zeta} \in \mathcal{F}_{t+}^0$. Thus replacing $\widetilde{A}_t$ by $\widetilde{A}_{t+\zeta}$ we may assume that $\widetilde{A}$ is constant on $[\zeta, \infty[$. \\

For each fixed $s,t$ with $0 \le s < t$ \begin{align}  P^m   (\widetilde{A}_{t-s}   \circ \theta_s \ne A_{t-s} \circ \theta_s)  & = P^m(P^{X(s)}(\widetilde{A}_{t-s}\ \ne A_{t-s})) \\ \notag & = P^{mP_s} (\widetilde{A}_{t-s} \ne A_{t-s})\le P^m (\widetilde{A}_{t-s} \ne A_{t-s}) \\ & \notag \ne 0 \end{align}

\noindent because $m \in \Exc$. By right continuity, for each $s>0$, $t \rightarrow \widetilde{A}_{t-s} \circ \theta_s$ and $t \rightarrow A_{t-s}\circ \theta_s$ are indistinguishable on $[s, \infty[$. Also for fixed $s \ge 0$ and $t \ge 0$, $P^m$ a.s. $\widetilde{A}_{t+s} = A_{t+s} = \widetilde{A}_s + A_t \circ \theta_s$. But as in (A.1), $P^m (\widetilde{A}_t \circ \theta_s \ne A_t \circ \theta_s) = 0;$ hence $\widetilde{A}$ satisfies (i) and (iii) of (5.27). \\

\noindent Since $\widetilde{A}_t \in \mathcal{F}_{t+}^0 \subset \mathcal{F}^0, \widetilde{A}_{t-s} \circ \theta_s \in \theta_s^{-1}(\mathcal{F}^0) = \mathcal{F}^0_{\ge s}$. \\

We now replace $A$ by $\widetilde{A}$ and drop the tilde ``$\sim$'' in our notation. More precisely we suppose that $A$ satisfies the following conditions: \vspace{-.2in}
\setcounter{equation}{2}

\noindent

\begin{equation}\tag{A.2}
\begin{cases}

\mathrm{(i)  \ for \ each} \ \omega \in \Omega, A_0(\omega) = 0, t \rightarrow A_t(\omega) \mathrm{ \ is \ finite, \ right \ continuous}  \\

 \quad \ \mathrm{ and \ increasing \ on \ } [0, \infty[, 
\notag  \mathrm{ \ constant \ on \ } [\zeta(\omega), \infty[    \mathrm{ \ and \ } \\ \quad A_{t}[\Delta]=0 \mathrm{ \ for \ all} \ t \ge 0;\\

 \notag \mathrm{ (ii) \ } A_t \in \mathcal{F}_{t+}^0 \mathrm{ \ for \ } t \ge 0 \mathrm{ \ and\ } A_{t-s}\circ \theta_s \in \mathcal{F}_{\ge s}^0 \mathrm{ \ for} \ 0 \le s \le t; \\
 
\notag \mathrm{(iii) \ for \  each \ } (s,t) \in \mathbb{R}^+, A_{t+s} = A_s + A_t \circ \theta_s \mathrm{\ a.s. \ } P^m. 
\end{cases}
\end{equation}

\noindent If $Z \in p \mathcal{F}^0$ then by considering $Z$ of the form $\prod_{j=1}^n f_j \circ X_{t_j}$, with $f_j \in C_b(E)$, a monotone class argument shows that $(s, \omega) \rightarrow Z(\theta_s\omega)$ is $\mathcal{B}^+ \times \mathcal{F}_0$ measurable. In what follows a.e. refers to Lebesgue measure in the relevant dimensional Euclidean space. Define \begin{equation} \Omega_1 := \{ \omega: \mathrm{\ for \ a.e.\ } (s,t), 0 \le s \le t, A_{t-s}(\theta_s \omega)+A_s(\omega)= A_t(\omega)\} \end{equation} 
and
\begin{equation} \hspace{-1.4in}\Omega_0 =: \{ \omega: \theta_s \omega \in \Omega_1 \mathrm{ \ for \ a.e. \ } s \ge 0 \}. \end{equation}
\begin{align} \mathbf {Lemma.\  } & (i) \  [\Delta] \in \Omega_1 \mathit{\ and \ hence  \ in \ } \Omega_0. \ \mathit{If} \ \omega \in \Omega_0, \mathit{\ then \ } \theta_s \omega \in \Omega_0 \mathit{ \ for \ all\ } s \ge 0. \\
\notag & (ii) \mathit{ \  If \ } \theta_{t_m}\omega \in \Omega_0 \mathit{\ and \ } t_m \downarrow t \ge 0, \mathit{ \ then \ } \theta_t \omega \in \Omega_0. \\  \notag & (iii)  \mathit{ \ Both \ }  \Omega_0 \mathit{\ and \ } \Omega_1 \mathit{\ are \ } \mathcal{F}^* \mathit{ \ measurable \ }.
 \\
  \notag &  \mathit{(iv) \ } P^m(\Omega_0^c) = P^m(\Omega_1^c) =0. \end{align} 

\noindent Proof. Assertions (i) and (ii) are clear from the definitions and because $A_t([\Delta]) =0$ for all $t$. For (iii) and (iv), note that from the measurability argument just above and the right continuity of $t \rightarrow A_t(\omega)$, the set $\{(s,t,\omega: s \le t, A_t(\omega) \ne A_s(\omega) + A_{t-s}(\theta_s \omega)\}$ is $\mathcal{B}^+ \times \mathcal{B}^+ \times \mathcal{F}^0$ measurable. Let $J(s,t,\omega)$ be the indicator function of this set. Let $C_u(\omega):= \int_0^u \int_0^t J(s,t,\omega ds dt$, then $C_u \in \mathcal{F}^0$ and $u \rightarrow C_u(\omega)$ is continuous. Hence $C \in \mathcal{B}^+ \times \mathcal{F}^0$ where $\mathcal{B}^+$ is the Borel $\sigma$-algebra of $\mathbb{R}^+$. Therefore $T:= \inf\{u: C_u >0 \} \in \mathcal{F}^*$. See [DM IV, 50, 51]. But $\Omega_1 = \{ T = \infty \}$ and consequently $\Omega_1 \in \mathcal{F}^*$. Moreover \[ P^m(\Omega_1^c) = P^m(T< \infty) = P^m \{ \lim_{u\rightarrow \infty} C_u >0 \}.  \]

\noindent But (A.2 iii) and Fubini's theorem imply that for $P^m(\lim_{u \rightarrow \infty} C_u) =0$. Hence $P^m(\Omega_1^c) = 0$. \\

In order to show that $\Omega_0 \in \mathcal{F}^*$ we shall use a ``sandwiching'' argument that will be used again in what follows. Let $Q$ be an arbitrary probability on $(\Omega, \mathcal{F}^*)$ and define $\overline{Q} : = \int_0^\infty e^{-s} \theta_s Q ds$. Since $\Omega_1 \in \mathcal{F}^*$ there exist $\Lambda_1 \subset \Omega_1 \subset \Lambda_2$ with $\Lambda_i \in \mathcal{F}^0, i=1,2$ and $\overline{Q}(\Lambda_2 \ \setminus \ \Lambda_1) =0$. Now $(s, \omega) \rightarrow 1_{\Lambda_i}(\theta_s \omega)$ is $\mathcal{B}^+ \times \mathcal{F}^0$ measurable for $i =1,2$ and $Q \int_0^\infty e^{-s} 1_{\Lambda_2 \setminus \Lambda_1} \circ \theta_s ds=0$. Therefore $(s, \omega) \rightarrow 1_{\Omega_1} (\theta_s \omega)$ is $(\mathcal{B}^+ \times \mathcal{F}^0)^{\lambda \times Q}$ measurable---the completion of $\mathcal{B}^+ \times \mathcal{F}^0$ with respect to $\lambda \times Q$---where $\lambda$ is Lebesgue measure on $\mathbb{R}^+$.  Let $S : = \inf \{t: \int_0^t e^{-s}1_{\Omega^c_1}(\theta_s)ds>0\}$ and $S_i$ be defined similarly relative to $\Lambda_i^c$, $i=1,2$. Then, as in the case of $T$ in the previous paragraph, it follows that $S_i \in \mathcal{F}^*, i =1,2$. But $S_1 \le S \le S_2$ and  $\{S_1 < S_2\} \subset \{ \lambda(s:e^{-s}1_{\Lambda_2 -\Lambda_1} \circ \theta_s) >0 \}$. Therefore $Q(S_1 < S_2) =0$ and since $Q$ is arbitrary, $S \in \mathcal{F}^*$. But $\Omega_0= \{S = \infty\}$ and so $\Omega_0 \in \mathcal{F}^*$. Also \[ P^m (\Omega_0^c) = P^m(S < \infty) = P^m \{ \int_0^\infty 1_{\Omega_1^c}(\theta_s)ds > 0\} \] and $P^m \int_0^\infty 1_{\Omega_1^c}(\theta_s)ds = \int_0^\infty P^{mP_s} (\Omega_1^c)ds=0$ since $mP_s \le m$ for each $s$. Consequently $P^m(\Omega_0^c)=0$. Thus both (iii) and (iv) are established. \hfill{$\square$}

Define \begin{align} \Lambda_t(\omega) &:= \ess A_{t-s}(\theta_s \omega) \mathrm{ \ if \ } t >0 ; \\ 
& = \notag \ess \Lambda_s(\omega) \mathrm{ \ if \ } t =0. \end{align}

\noindent Since $A_t(\omega)$ is constant on $[\zeta(\omega), \infty[$, if $0 < s < t$ and $t \ge \zeta(\omega)$, then $t-s \ge (\zeta(\omega) - s)^+ = \zeta(\theta_s \omega)$. Hence $t \rightarrow \Lambda_t(\omega)$ is constant on $[\zeta(\omega), \infty[$. Clearly $\Lambda_t([\Delta]) =0$ for all $t \ge 0$. From (A.2 ii), if $0 \le s < t$, $A_{t-s} \in \mathcal{F}^0_{(t-s)+}$ and so $A_{t-s}\circ \theta_s \in \theta_s^{-1}\mathcal{F}^0_{(t-s)+} \subset \mathcal{F}_{t+}^0$.  Therefore the process $Z_{s}:=A_{t-s}\circ \theta_s$ defined on $[0,t[$ is adapted to the constant filtration $\mathcal{H}_s := \mathcal{F}_{t+}^0, 0 \le s < t$. It follows from [DM IV, 38.1a] that given an arbitrary probability $P$ on $(\Omega, \mathcal{F}_{t+}^0)$, $\Lambda_t \in \overline{\mathcal{F}^0_{t+}}^P$. Consequently $\Lambda_t \in \mathcal{F}_{t+}^*$, the universal completion of $\mathcal{F}_{t+}^0$, for $t>0$. Letting $t \downarrow 0$, $\Lambda_0 \in \mathcal{F}_{0+}^*$. Also $Z$ is adapted to the (reverse) filtration $\mathcal{H}_s := \mathcal{F}_{\ge s}^0$, $0 \le s <t$ by (A.2ii). Invoking [DM IV; 38.1a] again, given an arbitrary probability $P$ on $(\Omega, \mathcal{F}^0)$, the process $\overline{Z}_s := \mathrm{ess}\lim_{u\downarrow s} A_{t-u}\circ \theta_u$ is $P$ indistinguishable from an $(\mathcal{F}_{\ge s}^0)$ adapted process on $[0,t[$. In particular, it follows that $\overline{Z}_s \in \mathcal{F}_{\ge s}^*, 0 \le s < t$. Observe that \begin{align*} \Lambda_{t-s} (\theta_s) = \ess A_{t-s-u}(\theta_u \theta_s) \\ & = \essv A_{t-v}(\theta_v) = \overline{Z}_s. \end{align*}

\noindent Hence $\Lambda_{t-s} \circ \theta_s \in \mathcal{F}_{\ge s}^*, o \le s <t$. \\

The next lemma is a key technical point and is due to John Walsh [W72]. 

\begin{align}  \mathit{ Lemma.} & \mathit{  \vspace{-.5in} \ For \ fixed}  \ \omega \in \Omega_0,  \mathit{ \ the \ following \ obtain}: \\
& \notag \mathrm{(i)} \  s \rightarrow \Lambda_{t-s}(\theta_s\omega) \mathit{\ is \ right \ continuous \ and \ decreasing \ on \ } [0,t[; \\ & \notag \mathit{(ii)} \ \Lambda_{t-s} (\theta_s \omega) = A_{t-s} (\theta_s \omega) \mathit{ \ for \ a.e. } s<t ; \\ & \notag \mathrm{(iii)} \ t \rightarrow \Lambda_t(\omega) \mathit{\ is \ right \ continuous, \ increasing, \ and \ finite \ on } \  [0, \infty[.  \end{align}

\begin{proof}Note first of all, that identically on $\Omega$ for $0 \le s < t$ 
\begin{align} \Lambda_{t-s}\circ \theta_s & = \mathrm{ess}\ \underset{\epsilon\downarrow 0}{\lim} \sup
 A_{t-s-\epsilon} \circ \theta_{s+\epsilon} \\ \notag & =\mathrm{ess} \lim_{u\downarrow s} \sup A_{t-u}\circ \theta_{u} =  \lim_{u\downarrow s} \sup \Lambda_{t-u} \circ \theta_u \end{align}

\noindent since for any \[f : \mathbb{R}^+ \rightarrow \mathbb{R}, \overline{f}(t):= \mathrm{ess}\lim_{r\downarrow t} \sup f(r) = \lim_{r\downarrow t} \sup \overline{f}(r).\] See [DM IV; 37.7]. For fixed $\omega \in \Omega_0$, let $\mathcal{S}(\omega) := \{ v:A_t(\omega) = A_v(\omega) +A_{t-v}(\theta_v\omega)$ for all $t>v \}$.  If $\theta_s \omega \in \Omega$, then by Fubini's theorem and the right continuity of $A$, $\lambda(\mathcal{S}(\theta_s\omega)^c) =0$. Here, as above, $\lambda$ denotes Lebesgue measure on $\mathbb{R}^+$. Hence for a.e. $u>s$, $u-s \in \mathcal{S}(\theta_s\omega)$, that is for a.e. $u>s$ and all $t \ge u-s$, \[A_t(\theta_s \omega) = A_{u-s}(\theta_s\omega) + A_{t-(u-s)}(\theta_u\omega).\]

\noindent If, in addition, $t-s > u-s$, then replacing $t$ by $t-s$ in the last display we obtain 

\[ A_{t-s}(\theta_s \omega) = A_{u-s}(\theta_s\omega) + A_{t-u}(\theta_u\omega) \mathrm { \ for \ a.e.} (s,u), s< u < t. \]

\noindent But $A_{t-u}(\theta_u \omega) \ge 0$ and so $A_{t-s} (\theta_s \omega) \ge A_{u-s} (\theta_s \omega)$ for a.e. $(s,u)$ with $s \le u \le t$. By Fubini's theorem for a.e. $s$, $u \rightarrow A_{t-u}(\theta_u \omega)$ is decreasing on $]s,t]$. Hence by the equality of the first and last terms in (A.8) yields (i). Now Lebesgue's differentiation theorem and the fact that for a.e. $s,u \rightarrow A_{t-u}(\theta_u \omega)$ is decreasing a.e. on $]s,t[$ imply that 

\begin{align*} \mathrm{ess} \lim_{u\downarrow s}  A_{t-u} (\theta_u \omega) & = \mathrm{ess} \lim_{u\downarrow s}  A_{t-u} (\theta_u \omega) \\
& = \lim_{\epsilon \downarrow 0} \frac{1}{\epsilon} \int_s^{s+\epsilon} A_{t-u} (\theta_u \omega) du = A_{t-s}(\theta_s) \end{align*}
for a.e. $s < t$, and so (ii) folllows from (A.8). Thus (i) and (ii) hold. Since $t \rightarrow A_t(\omega)$ is finite and increasing on $[0,\infty[$ for all $\omega$, so is $t \rightarrow \Lambda_t(\omega)$. For $\omega \in \Omega_0$, the definition (A.6) of $\Lambda_t$ with $t$ replaced by $t+ \epsilon$, gives
\begin{align*} \Lambda_{t+\epsilon}(\omega) & = \essu0 A_{t+\epsilon -u}(\theta_u \omega)  = \mathrm{ess} \lim_{u\downarrow 0} A_{t+\epsilon -u}(\theta_u \omega) \end{align*}

\noindent where the last equality obtains because $u \rightarrow A_{t+\epsilon -u}(\theta_u\omega)$ is decreasing a.e on $[0,t+\epsilon[$ for $\omega \in \Omega_0$ as was shown during the proof of (i). Since $\omega \in \Omega_0$, $\theta_u \omega \in \Omega_0$ for all $u$. Then $v-u \in \mathcal{S}(\theta_u \omega)$ for a.e. $v >u$. If $t > v, t-u > v-u$ and so for a.e. $v, u <v<t, A_{t-u} (\theta_u \omega) = A_{v-u}(\theta_u \omega) + A_{t-u-(v-u)}(\theta_{v-u}\theta_u \omega) = A_{v-u}(\theta_u \omega) + A_{t-v}(\theta_v \omega)$. But this also holds with $t$ replaced by $t+\epsilon$. Therefore for a.e. $v$, $u < v <t$ \begin{align*} A_{t+\epsilon -u}(\theta_u \omega) & = A_{t-u}(\theta_u \omega) +A_{t+\epsilon -u}(\theta_u \omega) - A_{t-u}(\theta_u \omega) \\ & = A_{t-u} (\theta_u \omega) + A_{t+\epsilon -v} (\theta_v \omega) -A_{t-v}(\theta_v \omega). \end{align*}

\noindent Now fix such a $v$ and substitute the above into the previous display. Taking essential limits as $u \downarrow 0$ we obtain \[\Lambda_{t+\epsilon}(\omega) = \Lambda_t(\omega) + A_{t+\epsilon -v}(\theta_v \omega) - A_{t-v}(\theta_v \omega). \] 
But $A$ is right continuous and so $\Lambda_{t+\epsilon}(\omega) \rightarrow \Lambda_t(\omega)$ as $\epsilon \rightarrow 0$. Thus $t\rightarrow \Lambda_t(\omega)$ is right continuous on $]0,\infty[$ and $\Lambda_t (\omega) \rightarrow \Lambda_0(\omega)$ as $t \downarrow 0$ by definition, completing the proof of (A.7).

\end{proof}

We return now to the proof of (5.27). Fix $t>0$ and $ \omega \in \Omega_0$. Using (A.7 ii), $\Lambda_{t-s}(\theta_s\omega) = A_{t-s}(\theta_s \omega)$ for a.e $s<t$. Fix such an $s$ with $0 < s < t$. Then \[A_{t-\epsilon}(\theta_\epsilon \omega) = A_{s-\epsilon}(\theta_\epsilon \omega) + \Lambda_{t-s}(\theta_s \omega) \mathrm{ \ for \  a.e.} \  \epsilon < s\]
and taking essential limits as $\epsilon \downarrow 0$ we obtain $\Lambda_t(\omega) = \Lambda_s(\omega)+\Lambda_{t-s}(\theta_s\omega)$ for a.e. $s<t$ and $\omega \in \Omega_0$. Then (i) and (iii) of (A.7) imply that this last equality holds for all $s<t$ and $\omega \in \Omega_0$. But $t>0$ is arbitrary and so $\Lambda_{t+s}(\omega) = \Lambda_s(\omega)+\Lambda_t(\theta_s \omega)$ for $\omega \in \Omega_0$ and by (A.7 iii) this holds for $t=0$ also. \\

\noindent Next note that from the right continuity of $A$, (5.27 iii) and Fubini's theorem imply that for each fixed $t>0$ for $P^m$ a.e $\omega$, $A_{t-s}(\theta_s\omega) =A_t(\omega)-A_s(\omega)$ for a.e $s \in ]0,t[$. Since $A_s(\omega) \rightarrow 0$ as $s \downarrow 0$ we see that $\ess A_{t-s}\circ \theta_s = A_t$ a.s. $P^m$. Combining this with (A.6), $\Lambda_t=A_t $ $P^m$ a.s. for each fixed $t>0$, and hence for all $t\ge0$ since $A$ is right continuous on $\Omega$ and $\Lambda$ is right continuous on $\Omega_0$. Consequently $\Lambda$ and $A$ are indistinguishable and $\Lambda_{t+s}(\omega) = \Lambda_s(\omega) +A_t(\theta_s \omega)$ on $\mathbb{R}^+ \times \mathbb{R}^+ \times \Omega_0$ Moreover $\Lambda$ is $(\mathcal{F}^\star_{t})$ adapted and $\Lambda_{t-s}\circ \theta_s \in \mathcal{F}^*_{\ge s}$, $0 \le s < t$.  \\

Finally at the cost of destroying the good measurability of $\Lambda$ we shall remove the exception set $\Omega \setminus \ \Omega_0$. Define $R(\omega) := \inf \{ t: \theta_t\omega \in \Omega_0\}.$ In view of (A.5 ii), $\theta_t \omega \in \Omega_0$ for all $t \ge R(\omega)$ and so $R = \inf \{t: \int_0^t 1_{\Omega_0} (\theta_s \omega) ds > 0 \}.$ Then a ``sandwiching'' argument shows that $R \in \mathcal{F}^*$---see the proof that $S$ defined in the proof of (A.5 iii) is $\mathcal{F}^*$ measurable. Since $\Omega_0 = \{ R=0\}$ by (i) and (ii) of (A.5), $P^m (R>0) = P^m (\Omega_0^c) =0$. Define \begin{equation}\begin{cases} A_t^*(\omega):= 0 \quad \mathrm{if}\  t< R(\omega) \\ A_t^*(\omega) := \Lambda_{t-R(\omega)}(\theta_R \omega) \quad \mathrm{if} \ t\ge R(\omega) \end{cases}\end{equation}

\noindent Noting that $\theta_R \omega \in \Omega_0$ if $R(\omega) <\infty$ and that $R\circ \theta_s = s + R \circ \theta_s$ if $s < R$ while $R \circ \theta_s =0$ if $s \ge R$, one readily verifies that $A_t^*(\omega) = A^*_s(\omega)+A^*_{t-s}(\theta_s\omega)$ identically in $0 \le s \le t$, $\omega \in \Omega$. Since $P^m(R>0)=0$, $\Lambda$ and $A^*$, and hence $A^*$ and $A$, are indistinguishable. Moreover $t \rightarrow A_t^*(\omega)$ is right continuous for all $\omega$ since $\theta_R \omega \in \Omega_0$ if $R(\omega) < \infty$. But $R$ is only $\mathcal{F}^*$ measurable and so one only knows that $A_t^* \in \mathcal{F}^*$. Of course, since $P^m(\Omega \setminus \ \Omega_0) =0$, $A^*$ is $(\mathcal{F}^m_t)$ adapted. It seems as though we are very close to establishing (5.27). For example it would suffice to show that $R$ is an $(\mathcal{F}^*_{t+})$ stopping time. But this is not at all clear, at least to me. \\

Therefore it is necessary to proceed in a different manner. We shall follow the argument in Appendix $A$ of [G90] which, we should emphasize, comes from [Me74]. It is convenient to summarize the properties of $\{\Lambda_t; t \ge 0\}$ that have been established so far for ease of reference: 
\begin{align}\mathrm{(A.10)} \quad \notag (i)  \ &   \Lambda_t \in  \mathcal{F}^*_{t+} \mathrm{ \ if\ } t \ge 0 \mathrm{\  and} \ \Lambda_{t-s}\circ \theta_s \in \mathcal{F}^*_{\ge s} \mathrm{ \ if \ }  0 \le s <t;   \\ \notag
 (ii) \ & \mathrm{  for \ every \ } \omega \in \Omega, t \rightarrow \Lambda_t(\omega) \mathrm{ \ is \ increasing, \ finite \ on } \ [0,\infty[ \mathrm{ \ and \ constant \ on} [\zeta(\omega), \infty[; \\ \notag (iii) \ & \Omega_0 \in \mathcal{F}^* \mathrm{ \ where \ } \Omega_0 \mathrm{  \ is \ defined \ in \ (A.4)} ;\\
 \notag (iv) \ & [\Delta] \in \Omega_0 \mathrm{ \ and \ } \Lambda_t([\Delta]) = 0 \mathrm{ \ for \ all \ } t \ge 0;
 \\ (v) & \ P^m(\Omega \setminus \ \Omega_0) =0 ; \notag \\
  (vi) & \ t_n \downarrow t, \theta_{t_n} \in \Omega_0 \mathrm{ \ for \ each\ } n \mathrm{ \ implies \ that \ } \theta_{t}\omega \in \Omega_0, t\ge0, \mathrm{ \ and \ } \omega \in \Omega_0 \Rightarrow \theta_t \omega \in \Omega_0, t \ge 0 \notag ; \\
 (vii) & \ \Lambda_{t+s}(\omega) = \Lambda_t(\omega) + \Lambda_s(\theta_t \omega) \mathrm{ \ for \ } \omega \in \Omega_0 \mathrm{\ and \ } s,t \ge 0 \notag ; \\
 (viii) & \ \mathrm{if} \ \omega \in \Omega_0,t \rightarrow \Lambda_t(\omega) \mathrm{\ is \ right \ continuous, \ increasing \ and \ finite \ on \ } [0,\infty[ \mathrm{\ and \ } s \rightarrow \Lambda_{t-s}(\theta_s \omega) \notag \\ &  \mathrm{ \ is \ right \ continuous, \ decreasing \ and \ finite \ on } \ [0,t[ \notag;\\
 (ix) & \ \Lambda \mathrm{ \ and \ } A \mathrm{\ are \ indistinguishable. } \notag \end{align}
 
 
 \setcounter{equation}{10}
 
 At this point we are going to make an additional assumption; namely that $A$ is continuous at $\zeta$. More precisely: 
 \begin{equation} \mathit{For \ } \omega \in \Omega, \mathit{ \ if \ } \zeta(\omega) < \infty, \mathit{ \ then \ } A_t(\omega) = A_{\zeta-}(\omega) \mathit{ \ for \ all \ } t \ge \zeta(\omega). \end{equation}
 
\noindent However (A.11) will be removed in the final step of the proof of Theorem 5.27. Note that (A.11) does not follow automatically if in hypothesis (5.27 i) right continuous is replaced by continuous because in the first step of the proof we replaced the $A$ in the hypothesis of (5.27) by $\tilde{A}$ which is only $P^m$ indistinguishable from the original $A$.  It is this $\tilde{A}$, now being called $A$, that is assumed to satisfy (A.11). \\
 
 \textit{Condition (A.11) is in force until further notice.} \\
  \begin{align} & \hspace{-.55in}  \mathbf{Lemma}\ \hspace{.15in} \mathrm{For\ } \omega \in \Omega_0, k_u \omega \in \Omega_0 \mathrm{ \ for \ each \ } u \ge 0 \hspace{1in}  \end{align}
  
  \begin{proof} Fix $\omega \in \Omega_0$ and $u>0$ ($k_0 \omega = [ \Delta ] \in \Omega_0$). From the definitions of $\Omega_0$ (A.4) and $\Omega_1$ (A.3) one must show that for a.e $s, \theta_sk_u \omega \in \Omega_1$. But $\theta_s k_u \omega = k_{u-s} \theta_s \omega$ if $s<u$ and $\theta_s k_u \omega = [\Delta]$ if $s \ge u$. Since $[\Delta] \in \Omega_1$, we may suppose that $s<u$ and and since $\theta_s \omega \in \Omega_0$ by (A.10 vi), it certainly suffices to show that if $\omega \in \Omega_0$, $k_u \omega \in \Omega_1$ for all $u>0$. That is,  for a.e. $(s,t), 0 \le s \le t$ one has 
  \begin{equation} A_{t-s}(\theta_s k_u \omega) + A_s(k_u \omega) = A_t(k_u \omega), \end{equation}
  and using the Fubini theorem and the right continuity of $t \rightarrow A_t$ this amounts to showing that (A.13) holds for a.e. $s \le t$ for each $t>0$. Suppose first that $t<u$. Since $\zeta(k_u\omega)=\zeta(\omega) \wedge u$ and $A_{t-s} \circ \theta \in \mathcal{F}^0_{t+}$, the conclusion is clear. Next suppose that $0 < u \le t$ and let $z:= \zeta(k_u \omega) \le u$. If $z \le s$, then $\theta_s k_u \omega = [\Delta]$ since $k_u \omega(r) = \Delta$ where $ r \ge z$. Therefore $A_{t-s}(\theta_sk_u \omega) = A_{t-s}([\Delta]) =0$, while $A_s(k_u \omega) =A_z(k_u \omega) = A_t(k_u \omega)$ since $A$ is constant on $[\zeta, \infty[$. Hence (A.13) holds identically in $s$ in this range. Finally consider the case $s < z$. Choose $v$ with $s < v < z \le u$. Then by the first case above, $A_{v-s}(\theta_s k_u \omega) + A_s (k_u \omega) = A_v (k_u \omega)$ for $v>s$. Let $v \uparrow z$ to obtain 
  \begin{equation}A_{(z-s)-} (\theta_s k_u \omega) +A_s (k_u \omega) = A_{z-} (k_u \omega) \end{equation}
 
 for $s<z$. But if $s<z = \zeta(k_u\omega)$, then $z-s = \zeta(\theta_s k_u \omega)$. Since $t \ge u$ one has $ t \ge z$ and $t-s \ge z-s = \zeta(\theta_s k_u \omega)$, and one concludes from (A.11) and (A.10i) that (A.14) holds with $(z-s)-$ and $z-$ replaced by $t-s$ and $t$ respectively.  These cases taken together show that (A.13) holds. Hence $k_u \omega \in \Omega_0$.  
\end{proof}

We are now going to remove the expectional set $\Omega \setminus \ \Omega_0$ while preserving the properties of $\Lambda := \{ \Lambda_t, t \ge 0\}$ listed in (A.10). To this end given $\omega \in \Omega$ and $t >0$ call $\tau : = \{ (s_i, t_i): 1 \le i \le n\}$ a good partition for $\omega$ of $[0,t]$ (g.p. $\omega$/$t$) provided: 

\begin{align} (i) & \ 0 \le s_1 < t_1  \le s_2 < t_2 \le \dots \le s_n < t_n \le t; \\
(ii) \notag & \ k_{u -s_i} \theta_{si} \omega \in \Omega_0 \mathrm{ \ for \ some \ } u> t_i, 1 \le i \le n.\end{align} 

\noindent If, in addition, each $s_i$ and $t_i$ is rational except for $t_n$ in case $t_n =t$ we shall define $\tau$ to be rational; that is $\tau$ is rational provided $\tau \cap [ 0,t[ \subset \mathbb{Q}$. We shall use $\tau$ both for the collection of ordered pairs and the set $\{s_i, t_i; 1 \le i \le n\}$. Given $\omega \in \Omega$ and $>0$ define 
\begin{align} J_t^\tau(\omega) & := \sum_{i=1}^n \Lambda_{t_i -s_i} (\theta_{s_i}\omega); \tau \  \mathrm{a  \ g.p.}\omega / t; \\
J_t(\omega) & := \sup_\tau J_t^\tau(\omega) \end{align}

\noindent where the supremum in (A.17) is over all g.p. $\omega/ t$. By convention, if there are no g.p $\omega/t$ we say that the empty partition is a g.p. $\omega/ t$ and set $J_t^\tau(\omega) =0$ if $\tau$ is empty. Observe that if $\tau$ is a g.p $\omega/t$ then $\tau$ is a g.p $\omega/t'$ for $t' >t$. Hence for all $\omega \in \Omega$, $t \rightarrow J_t(\omega)$ is increasing and $J_t(\omega) \ge 0$. Note $J_t$ is defined only when $t>0$. 
  \begin{align}&  \hspace{-.5in} \mathbf{Lemma}\  \mathit{If} \ \omega \in \Omega_0  \mathit{ \ and \ } t >0, \mathit{ \ then \ } \Lambda_t(\omega) = J_t(\omega) \hspace{1in}  \end{align}
  
  \begin{proof} By (A.12) and (A.10 vi), $\tau = \{ (0,t)\}$ is a g.p $\omega$/$t$, and so $J_t(\omega) \ge \Lambda_t(\omega)$. In what follows $\omega \in \Omega_0$ is suppressed in our notation. Given a g.p $\omega/ t$, $\tau = \{ (s_i, t_i); 1 \le i \le n\}$ using (A.10 vii), 
  
  \begin{align*} \Lambda_t &= \Lambda_{s_1}+ \Lambda_{t-s_1}(\theta_{s_1}) \ge  \Lambda_{t-s_1}(\theta_{s_1}) \\
  & = \Lambda_{(t_1 - s_1) + (t-t_1)}(\theta_{s_1}) = \Lambda_{t_1 -s_1}(\theta_{s_1}) + \Lambda_{t-t_1}(\theta_{t_1}) \end{align*}
  and
  \begin{align*} \Lambda_{t-t_1}(\theta_{t_1}) & = \Lambda_{s_2 - t_1}(\theta_{t_1}) + \Lambda_{t-s_2}(\theta_{s_2}) \\ & \ge \Lambda_{t-s_2}(\theta_{s_2})= \Lambda_{t_2-s_2}(\theta_{s_2}) + \Lambda_{t-t_2}(\theta_{t_2}). \end{align*}

\noindent Hence $\Lambda_t \ge \Lambda_{t_1 -s_1}(\theta_{s_1}) + \Lambda_{t_2 -s_2}(\theta_{s_2}) + \Lambda_{t-t_2}(\theta_{t_2})$ and proceeding inductively $\Lambda_t \ge J_t^\tau$. Therefore $\Lambda_t \ge J_t$ and so $\Lambda_t = J_t$.  \end{proof}
  \begin{equation} \mathbf{Lemma. \ } J_{t+s}(\omega) = J_t(\omega) + J_s(\theta_t \omega) \mathit{ \ for \ all \ } \omega \in \Omega \mathit{ \ and \ all \ } s,t >0. \end{equation}

\begin{proof} Fix $s,t >0$ and $\omega \in \Omega$. Unless required for clarity we again suppress $\omega$ in the notation. Let $\tau$ be a g.p $\omega/ t$ and $\tau'$ a g.p $\theta_t$ $\omega/s$. Then one easily checks that $\tau'':= \tau \cup (\tau' +t)$ is a g.p.$\omega/t+s$. To show $J_{t+s} \ge J_t+J_s(\theta_t)$ it suffices to suppose that $J_{t+s} < \infty$. Then given $\epsilon >0$ we may choose $\tau$ and $\tau'$  so that $J_t^\tau \ge J_t - \epsilon$ and $J_s^{\tau'}(\theta_t) \ge J_s(\theta_t) -\epsilon$. Hence
\[J_{t + s} \ge J_{t+s}^{\tau''} = J_t^\tau +J_s^{\tau'}(\theta_t) \ge J_t + J_s(\theta_t) - 2 \epsilon\]
and so $J_{t+s} \ge J_t + J_s(\theta_t)$. For the opposite inequality one may suppose that $J_t +J_s(\theta_t) < \infty$.  Then fix $\epsilon >0$ and choose a g.p $\omega/t+s$, $\tau = \{ (s_i,t_i); 1 \le i \le n\}$ such that $J_{t+s}^\tau \ge J_{t+s} - \epsilon$. We consider two cases. \\

\noindent Case 1: It is \textit{not} the case that $s_k < t < t_k$ for some $k$, $1 \le k \le n$. If $t> t_n$ let $\tau' = \tau$ and $\tau''$ be empty, while if $t < s$, let $\tau'$ be empty and $\tau'' = \tau -t$. If $t = s_k$ for some $k$, let $\tau' =\{ (s_i,t_i): 1 \le i < k\}$ and $\tau''= \{ (s_i-t, t_i-t); k \le i \le n\}$ while if $ t = t_k$ for some $k$, let $\tau' = \{ (s_i,t_i) ; 1 \le i \le k\}$ and $\tau'' = \{ (s_i-t, t_i-t); k < i \le n\}$. Note that $\tau'$ is empty if $t \le s_1$, and $\tau''$ is empty if $t \ge t_n$. It is easily checked that $\tau'$ is a g.p. $\omega/t$ and that $\tau''$ is a g.p. $\theta_t$ $\omega /s$. Therefore \[J_t+J_s(\theta_t) \ge J_t^{\tau'} +J_s^{\tau''}(\theta_s) = J_{t+s}^\tau \ge J_{t+s} - \epsilon,\] and so $J_t +J_s(\theta_t) \ge J_{t+s}$, establishing (A.19) in this case.

\noindent Case 2: For some $k$, $s_k < t < t_k$. In this case let \[\tau' := \{ (s_i, t_i); 1 \le i < k \} \cup \{ (s_k,t)\} \]
\[ \tau '' : = \{ (0,t_k-t)\} \cup \{ (s_i-t, t_i -t) ; k+1 \le i < n \}. \]

\noindent Since $\tau$ is a g.p. $\omega / t+s$, there exists $u > t_k$ such that $k_{u-s_k} \theta_{s_k} \omega \in \Omega_0$. But $t_k > t$ and so $\tau'$ is a g.p. $\omega/t$. Also using (A.10 vi) 
\[ k_{u-t}\theta_t \omega = k_{u-t} \theta_{t-s_k}\theta_{s_k} \omega = \theta_{t-s_k}k_{u-s_k} \theta_{s_k} \omega \in \Omega_0. \]

\noindent Since $u-t > t_k -t$ this implies that $\tau''$ is a g.p. $\theta_t \omega/s$. Now $u>t_k > t > s_k$ and so we compute as follows where the second equality below holds because $k_{u-s_k}\theta_{s_k} \omega \in \Omega_0$ where $\omega \in \Omega$ is fixed but suppressed in our notation while the first and fourth inequality below use the fact that if $v > r, \Lambda_r \circ k_v = \Lambda_r$ since $\Lambda_r \in \mathcal{F}^*_{r+}$ and $\Lambda_r ([\Delta]) =0$ (see lemma 5.33):

\begin{align*} \Lambda_{t_k-s_k}(\theta_{s_k}) & = \Lambda_{t-s_k + (t_k -t)}(k_{u-s_k}\theta_{s_k}) \\
& = \Lambda_{t-s_k}(k_{u-s_k}\theta_{s_k}) + \Lambda_{t_k-t}(\theta_{t - s_k} k_{u-s_k}\theta_{s_k}) \\
& = \Lambda_{t-s_k}(\theta_{s_k}) + \Lambda_{t_k-t}(k_{u-t}\theta_t) \\
& = \Lambda_{t-s_k}(\theta_{s_k}) + \Lambda_{t_k-t}(\theta_t). 
\end{align*}

\noindent Consequently $J_{t+s}^\tau = J_t^{\tau'} +J_s^{\tau''}(\theta_t)$, and now arguing as in Case 1 we obtain $J_{t+s} \ge J_t +J_s(\theta_t)$ completing the proof of (A.19).
\end{proof}
\begin{align} \notag \mathrm{(A.20)} \quad & \mathbf{Lemma.} \mathit{ \ Fix \ }  \omega \in \Omega \mathit{ \ and \ } 0 \le s < t. \mathit{ \ If \ there \ exists\ a \ } v>t \mathrm{ \ with \ } k_{v-s}\theta_s \omega \in \Omega_0, \mathit{ \ then \ } \\ & u \notag
\rightarrow \Lambda_{t-u}(\theta_u\omega) \mathit{ \ is \ decreasing \ and \ right \ continuous \ on \ } [s,t[. \end{align}
  
 \begin{proof} If $u \in [s,t[$, then using (5.33) \begin{align*} \Lambda_{t-u}(\theta_u \omega) & = \Lambda_{t-u}(k_{v-u} \theta_u \omega) = \Lambda_{t-u}(k_{v-u}\theta_{u-s}\theta_s \omega) \\ 
 &= \Lambda_{t-u}(\theta_{u-s}k_{v-s}\theta_s \omega) = \Lambda_{(t-s)-(u-s)}(\theta_{u-s}\omega') 
 \end{align*}
 
\noindent  where $\omega' = k_{v-s}\theta_s \omega \in \Omega_0$. The conclusion is now immediate from (A.10 viii). \end{proof}

 The next lemma will enable us to establish the good measurability of $J_t$ and $J_{t-s}\circ \theta_s$. 
 
 \begin{align} \notag \mathrm{(A.21)} \quad& \mathbf{Lemma}. \ \mathit{ \ For \ each\ } t > 0 \mathit{ \ and \ } \omega \in \Omega, J_t(\omega) = \sup_\tau J_t^\tau(\omega) \mathit{ \ where \ the \ supremum \ is \ over} \\ & \notag \mathit{ all \ rational \ g.p.\ }\omega/t.\end{align}
 
  \setcounter{equation}{21}

 \begin{proof}Let $\mathcal{R}(t)$ denote the collection of all rational g.p.$\omega/t$, and note that $\mathcal{R}(t)$ is countable. Trivially $J_t(\omega) \ge \sup\{J_t^\tau(\omega); \tau \in \mathcal{R}(t)\}$. For the opposite inequality fix $\epsilon >0$ and choose a g.p.$\omega / t$, $\tau = \{ (s_i,t_i); 1 \le \i \le n\}$ such that $J_t^\tau(\omega) \ge J_t(\omega)-\epsilon$. Using Lemma A.20, for each $i$ we may choose $r_i \in ]s_i,t_i[$ with $r_i$ rational and such that \[ \Lambda_{t_i-r_i}(\theta_{r_i}\omega) \ge \Lambda_{t_i-s_i} (\theta_{s_i}\omega) - \epsilon / n.\]
 
\noindent  For each $i \le n$, choose $u_i > t_i$ such that $k_{u_i-s_i}\theta_{s_i} \omega \in \Omega_0$, and then choose a rational $q_i \in ]t_i,r_{i+1}[$ (if $t=n$ choose a rational $q_n \in ]t_n,t[$ unless $t_n=t$ in which case set $q_n =t_n=t$) such that $q_i < u_i$. This is possible since $\tau$ is a g.p.$\omega /t$. By (A.10 ii) $\Lambda_{q_i-r_i}(\theta_{r_i}\omega) \ge \Lambda_{t_i-r_i}(\theta_{r_i}\omega)$, and using (A.10 vi) $k_{u_i -r_i}(\theta_{r_i}\omega) = k_{u_i-r_i}(\theta_{r_i-s_i}\theta_{s_i}\omega) = \theta_{r_i-s_i}k_{u_i-s_i}(\theta_{s_i}\omega) \in \Omega_0.$ \\
 
\noindent  Therefore $\tau' := \{ (r_i, q_i); 1 \le i \le n\} \in \mathcal{R}(t)$, and \begin{align*}J_t^{\tau'}(\omega) & = \sum_{i=1}^n \Lambda_{q_i-r_i}(\theta_{r_i} \omega) \ge \sum_{i=1}^n \Lambda_{t_i -r_i} (\theta_{r_i}\omega) \\
 & \ge \sum_{i=1}^n \left (\Lambda_{t_i -s_i}(\theta_{s_i}\omega)-\epsilon/n \right) \\
 & \ge J_t^\tau(\omega) - \epsilon \ge J_t(\omega) -2 \epsilon. \end{align*}
 
\noindent Hence $\sup\{J_t^\tau(\omega);\tau \in \mathcal{R}(t) \} \ge J_t(\omega)$ establishing (A.21).  \end{proof}
 
 In light of (A.10i) and (A.16) it follows from (A.21) that $J_t \in \mathcal{F}_{t+} ^* \subset \mathcal{F}^*, t>0$. Therefore $J_{t-s}\circ \theta_s \in \theta_s^{-1} \mathcal{F}^* \subset \mathcal{F}_{\ge s}^*$. Moreover identically on $\Omega$, $J_{t+s} = J_t + J_s \circ \theta_t$ for $s,t > 0$ according to (A.19). Since $P^m (\Omega \ \setminus \ \Omega_0) =0$, (A.18) implies that $\Lambda$ and $J$ are indistinguishable on $]0,\infty[$, and hence the three processes $A, \Lambda$, and $J$ are indistinguishable on $]0,\infty[$. \\
 
 Next we modify $J$ to render it right continuous while preserving the good measurability and shift properties of $J$. Clearly $t \rightarrow J_t(\omega)$ is increasing on $]0,\infty[$ for each $\omega \in \Omega$, and since $J$ are $A$ are indistinguishable, for $P^m$ a.e. $\omega$, $t \rightarrow J_t(\omega)$ is finite on $]0,\infty[$. Of course $J \ge 0$. Consequently $J_{t+}(\omega)$ exists for $t>0$ and $0 \le J_{t+}(\omega) \le \infty$ for each $\omega \in \Omega$. Define $J_0 := 0$ and note that (A.19) implies that $J_{(t+s)+}(\omega) = J_t(\omega)+ J_{s+}(\theta_t \omega)$  for $s,t \ge 0$ and $\omega \in \Omega$. In what follows $\omega \in \Omega$ is suppressed in the notation. Define for $t > 0$
 
 \begin{align} K_t & = J_t - \sum_{0 \le s <t} (J_{s+}-J_s) \quad \mathrm{if \ } J_{t+} < \infty \\ 
 K_t & = \infty \notag \quad \mathrm{if \ } J_{t+}=\infty. \end{align}
 
\noindent Since $J_{s+} - J_s \le J_{s+} - J_{s-}$ the sum in (A.22) is dominated by the sum of jumps of $J$ on the interval $[0,t[$, and hence by $J_t \le J_{t+} < \infty$. Therefore $K_t \ge 0.$ It is easy to check that $t \rightarrow K_t$ is right continuous on $]0,\infty[$. We assert that it is also increasing on $]0,\infty[$. Let $K^\epsilon$ be defined as in (A.22) except that the sum is taken only over the finite number of $s \in [0,t[$ for which  $J_{s+} - J_s > \epsilon$. Then $K_t^\epsilon \downarrow K_t$. Fix $0<u<t$. If $J_{t+} = \infty, K_t = \infty$ and so $K_u \le K_t$. If $J_{t+} < \infty$, let $s_1 < s_2 < \dots < s_n$ be the finite number of $s \in [u,t[$ at which $J_{s+} - J_s > \epsilon$. Then $J_u \ge J_{t+} <\infty$ and 
 
 \begin{align} K_t^\epsilon - K_u^\epsilon & = J_t - J_u - \sum_{i=1}^n (J_{s_i+} - J_{s_i}) \\ \notag 
 &= J_{s_1}  - J_u + \sum_{i=1}^{n-1} (J_{s_{i+1}} - J_{s_i+}) + J_t - J_{s_n+} \ge 0. \end{align}
 
\noindent  Therefore $K_t^\epsilon \ge K_u^\epsilon$ and letting $ \epsilon \downarrow 0$ it follows that $K_t \ge K_u$. Hence $t \rightarrow K_t$ is increasing on $]0,\infty[$. If $J_t =\infty$ for all $t>0$, then $K_t = \infty$ for all $t>0$. On the other hand if $J_v < \infty $ for some $v>0$, then for $0< t< v$ \[\sum_{0 \le s < t} (J_{s+}-J_s) \le J_t < \infty. \]
 
 \noindent But as $t \downarrow 0$ this sum decreases to $J_{0+}-J_0 = J_{0+} < \infty$. Hence $K_{0+} =0$ in this case. Thus $K_{0+} =0$ or $K_{0+} = \infty$ according as $J_t = \infty $ for all $ t>0$ or $J_t < \infty$ for some $t>0$. Define $K_0 := K_{0+}$ so that $t \rightarrow K_t$ becomes right continuous on $[0,\infty[$. If $\omega \in \Omega_0$, $J_t (\omega) = \Lambda_t(\omega) < \infty$. Consequently if $\omega \in \Omega_0$, $K_0(\omega) =0$ and $K_t(\omega) \le J_t(\omega)<\infty$. \\
 
 We claim next that \begin{equation} K_{t+s}(\omega) = K_t(\omega) + K_s(\theta_t \omega) \mathrm{ \ for \ } s, t \ge 0, \omega \in \Omega_0. \end{equation}
 
\noindent If $\infty = J_{(s+t)+ } = J_t + J_{s+}(\theta_t)$, then either $J_t = \infty$ or $J_{s+}(\theta_t) = \infty$ and so (A.24) follows immediately from the definition (A.22) of $K$. If $J_{(t+s)+} < \infty$, then both $J_t$ and $J_{s+}(\theta_t) < \infty$. In this case $K_0 = 0$ and so (A.24) holds for $t=0$ and all $s \ge 0$. If both $t>0$ and $s>0$, then using (A.19)
 
 \begin{align*} K_{t+s} & = J_t + J_s(\theta_t) - \sum_{0 \le u < s +t} (J_{u+}-J_u) \\ 
 & = J_t - \sum_{0 \le u < t} (J_{u+} - J_u) + J_s(\theta_t) - \sum_{0 \le u < s} (J_{(u+t)+} - J_{u+t}) \\
 & = K_t + K_s(\theta_t), \end{align*}
 
\noindent and letting $s \downarrow 0$ we obtain (A.24) in all cases. Clearly $K$ inherits the property $K_t \in \mathcal{F}^*_{t+}$ from $J$ and so $K_{t-s} \circ \theta_s \in \mathcal{F}^*_{\ge s}$ for $0 \le s <t$. By (A.18), $J_t(\omega) = \Lambda_t(\omega)$ if $\omega \in \Omega_0$ and since $t \rightarrow \Lambda_t(\omega)$ is right continuous if $\omega \in \Omega_0$ it follows from the definition of $K$ that $K_t(\omega) = J_t(\omega) = \Lambda_t(\omega)$ if $\omega \in \Omega_0$. Therefore the four processes $K, J, \Lambda$ and $A$ are indistinguishable. Thus $K$ has all the properties $L$ is asserted to have in (5.27) expect the last assertion in (5.27 v) in that we have only shown that $K_{t-s} \circ \theta_s \in \mathcal{F}_{\ge s}^*$ rather than $\mathcal{F}_{>s}^*$. The problem is that $t \rightarrow K_t$ is finite on $[0,\infty[$ only $P^m$ a.s., and so $K_{t-s}(\theta_s) = K_t - K_s$ makes sense only $P^m$ a.s. Of course if $t>0$ and $K_t(\omega) < \infty$, then $s \rightarrow K_{t-s}(\theta_s \omega)= K_t(\omega) -K_s(\omega)$ is right continuous on $[0,t[$. Therefore it is necessary to make on last modification to arrive at the desired functional. \\
 
 If $0<s<t$, define $\tilde{K}_{t-s}(\theta_s) = K_t - K_s$ where $\infty - \infty =0$. Since $K_t \ge K_s$, $\tilde{K}_{t-s}(\theta_s) =0$ when $K_s = \infty$ and so $s \rightarrow \tilde{K}_{t-s}(\theta_s)$ is decreasing on $]0,t[$. Therefore we may define for $t>0$ and $\omega \in \Omega$, \begin{equation}L_t(\omega) = \uparrow \lim_{s\downarrow 0} \tilde{K}_{t-s} (\theta_s \omega). \end{equation}
 
 \noindent If $\omega \in \Omega_0$, $\tilde{K}_{t-s}(\theta_s \omega) = K_t(\omega) - K_s(\omega) \downarrow K_t(\omega)$ as $s \downarrow$ because $K_0(\omega)=0$ for $ \omega \in \Omega_0$. Therefore $L_t(\omega) = K_t(\omega)$ if $\omega \in \Omega_0$ and $t>0$, and so $L$ and $A$ are indistinguishable on $]0,\infty[$. If $0 < s < r < t$ and $ \omega \in \Omega$, $\tilde{K}_{t -r -s}(\theta_{r+s}\omega)$ increases as either $r$ or $s$ decreases. Consequently for $\omega \in \Omega$, $r \rightarrow L_{t-r}(\theta_r \omega)$ is decreasing and right continuous on $[0,t[$, the interchange of limits being valid. In what follows $\omega \in \Omega$ is fixed, but suppressed in our notation. It is clear that $L$ inherits the properties $L_t \in \mathcal{F}^*_{t+}$ from $K$ and hence $L_{t-s}\circ\theta_s \in \mathcal{F}^*_{\ge s}$. Note that for fixed $\epsilon >0$, $ t \rightarrow \tilde{K}_{t-s}(\theta_s)$ is increasing on $]s, \infty[$. Hence $t \rightarrow L_t$ is increasing on $]0,\infty[$ and we may complete the definition of $L$ by defining $L_0 := \lim_{t \downarrow 0} L_t$. 
 
 \begin{align} \hspace{-.4in}\mathbf{Lemma} \ & (i) \ t \rightarrow L_t \mathit{\ is \ right \ continuous \ on \ } ]0,\infty[, \\ 
 & \notag (ii) \ L_{t+s} = L_t +L_s \circ \theta_t \mathit{ \ for \ } s,t \ge 0, \\
 & \notag (iii) \ L_{t-s} \circ \theta_t \in \mathcal{F}^*_{>s} \mathit{ \ and \ } L_0 \mathit{ \ is \ either \ zero \ or \ infinity }. \end{align}
 
 \begin{proof} (i) Suppose $0 < r < s < t < u$. We claim in all cases \[ K_u - K_r + K_t - K_s = K_t - K_r + K_u-K_s .\]
 
\noindent If $K_s < \infty$, this is certainly true. If $K_r < K_s = \infty$, then both sides are infinite, while if $K_r = \infty$, both sides are zero. Recall that $\infty - \infty =0$ by convention. Therefore \[ \tilde{K}_{u-r}(\theta_r) +\tilde{K}_{t-s}(\theta_s) = \tilde{K}_{t-r}(\theta_r) + \tilde{K}_{u-s}(\theta_s),\] and letting $r \downarrow 0$ we obtain, \[L_u + \tilde{K}_{t-s}(\theta_s) = L_t + \tilde{K}_{u-s}(\theta_s). \]
 
\noindent Fix $t>0$. If $L_t = \infty$, $L_u = \infty$ for $u \ge t$ and so $u \rightarrow L_u$ is right continuous on $[t, \infty[$.  If $L_t < \infty$, then $\tilde{K}_{t-s}(\theta_s) < \infty$ for $0 < s <t$ since it increases to $L_t < \infty$ as $s \downarrow 0$. Therefore \[ L_u = L_t - \tilde{K}_{t-s}(\theta_s) + \tilde{K}_{u-s}(\theta_s). \]
 
\noindent But $u \rightarrow \tilde{K}_{u-s}(\theta_s) = K_u -K_s$ is right continuous on $[s, \infty[$, if $K_s <\infty$ and equals zero if $K_s = \infty$ for $u >s$. Since $s<t$, $u \rightarrow L_u$ is right continuous on $[t, \infty[$ in this case also. Since $t>0$ is arbitrary, $u \rightarrow L_u$ is right continuous on $]0, \infty[$ and $L_0 = L_{0+}$ by definition, establishing (i). \\
 
 (ii) If $0 < r < u < t$ then $\tilde{K}_{t-r}(\theta_r) = \tilde{K}_{t-u}(\theta_u) + \tilde{K}_{u-r}(\theta_r)$ in all cases as is easily checked. Let $r \downarrow 0$ to obtain $L_t = \tilde{K}_{t-u} (\theta_u) +L_u$. If $0 \le s < t$ let $u \downarrow s$ with $s < u < t$ obtaining $L_t = L_{t-s}(\theta_s) + L_s$. Writing this for $s=0$ and letting $ t \downarrow 0$ gives $L_0 + L_0 = L_0$ and so either $L_0$ is zero or infinite. If $\omega \in \Omega_0$, $L_t(\omega) = K_t(\omega)$ for $t>0$ and letting $t \downarrow 0$ we find that $L_0 (\omega) = K_0(\omega) =0$. In particular $L$ and $K$, and hence $L$ and $A$, are indistinguishable. Finally since $s \rightarrow L_{t-s}(\theta_s)$ is right continuous on $[0,t[$ and $L_{t-s}\circ \theta_s \in \mathcal{F}_{\ge s}^*$ it follows that $L_{t-s}\circ \theta_s \in \mathcal{F}_{>s}^*$. This establishes both (ii) and (iii). $\square$ \\

 It is now clear that $L$ has all of the properties asserted in Theorem 5.27. It remains only to remove the condition (A.11) in order to complete the proof of Theorem 5.27. To this end suppose that $A$ satisfies the hypotheses of (5.27) and as a first step we replace it with an indistinguishable $A$ satisfying (A.2) as before. Since $\zeta ([\Delta]) =0$ (A.2 i) implies that $A_t([\Delta]) =0$ for all $t \ge 0$. Define
 
 \begin{equation} \tilde{A}_t := A_t 1_{ \{ t < \zeta\} } + A_{\zeta-} 1_{\{\zeta \le t \} } \end{equation}
 
 \noindent where $A_{0-} = 0$ by convention. Then $\tilde{A}$ satisfies (A.2i). Using the facts that $t \rightarrow A_t$ is constant of $[ \zeta, \infty [$ and that $A_t([\Delta]) =0$ for all $t$ it is not difficult to check that $A$ also satisfies (A.2 iii) by considering the five cases (i) $t+s < \zeta$, (ii) $s,t< \zeta \le t+s$, (iii) $s < \zeta \le t$, (iv) $t < \zeta \le s$, and (v) $t,s \ge \zeta$ separately. For example, if $s < \zeta \le t$, then $\zeta \circ \theta_t=0$ and so $ s \ge \zeta \circ \theta_t$. Therefore $\tilde{A}_t + \tilde{A}_s \circ \theta_t = A_{\zeta-} + 0=A_{t+s}$. On the other hand if $s+t < \zeta$, then $s < \zeta \circ \theta_t$ and so $\tilde{A}_t + \tilde{A}_s \circ \theta_t = A_t + A_s \circ \theta_t = A_{t+s} = \tilde{A}_{t+s}$ a.s. $P^m$. The remaining cases are treated similarly. Since $\zeta$ is an $(\mathcal{F}_t^0)$ stopping time it is easy to check that $\tilde{A}$ also satisfies (A.2ii).  It is obvious that $\tilde{A}$ satisfies (A.11) Consequently by what has been proved so far, there exists $\tilde{L}$ indistinguishable from $\tilde{A}$ satisfying (iv)-(vi) of (5.27).  \\
 
 Define $U = (A_\zeta-A_{\zeta-})1_{\{\zeta< \infty\}}$. Note that $A_{\zeta} < \infty $ if $\zeta < \infty$. Since $A_0 =0$ and $A_{0-} =0$ by convention $U=0$ on $\{\zeta =0\}$. Clearly $U \in \mathcal{F}^0$. Next define \[ V = \mathrm{ess}\lim_{t \uparrow \zeta} \sup U \circ \theta_t \mathrm{ \ if \ } \zeta >0, V =0 \mathrm{ \ if \ } \zeta =0. \]
 
\noindent Note that $V([\Delta]) =0$ and that $V \circ \theta_s = V$ if $s < \zeta$ while if $s \ge \zeta$, $\zeta \circ \theta_s =0$ and so $V \circ \theta_s =0$. Thus $V \circ \theta_s = V$. Also it follows from [DM IV; 38.1a] that $V \in \mathcal{F}^*$. See the argument used to show that $\Lambda_t \in \mathcal{F}_{t+}^*$ below (A.6). Moreover (A.2i) and (A.2 iii) imply that $U \circ \theta_s = U$ a.s. $P^m$ for each fixed $s$. Hence by the Fubini theorem for $P^m$ a.e. $\omega$, $U(\theta_s \omega) = U(\omega)$ for Lebesgue a.e. $s$. Consequently $P^m(U=V) =0$. Now define $H_t = V 1_{[\zeta, \infty[}(t)$. Since $V \circ \theta_s =V$ and $V=0$ on $\{\zeta =0\}$ it follows that $H$ satisfies conditions (iv), (v), and (vi) of (5.27). Clearly $H_t \in \mathcal{F}^*$. Suppose $t <r$. If $t < \zeta(\omega)$ then $t < \zeta(\omega) \wedge  r = \zeta(k_r \omega)$ and so $H_t(\omega) =0 = H_t(k_r \omega)$. If $\zeta(\omega) \le t$, then $k_r \omega = \omega$. Therefore if $t < r$, $H_t = H_t \circ k_r$ which implies that $H_t \in \cap_{r>t} k_r^{-1} \mathcal{F}^* = \mathcal{F}^*_{t+}$. Also $H_{t-s} \circ \theta_s \in \theta_s^{-1} \mathcal{F}^* = \mathcal{F}^*_{\ge s}$. Using the fact that $\zeta \circ \theta_t = (\zeta - t)^+$, one sees that $ s \rightarrow H_{t-s} \circ \theta_s$ is right continuous on $[0,t[$, and so, in fact, $H_{t-s} \circ \theta_s \in \mathcal{F}_{>s}^*$. Therefore $H$ satisfies the conclusion of Theorem 5.28, and hence so does $L:= \tilde{L} +H$. Since $U =V$ a.s. $P^m$, it follows from the Fubini theorem that a.s. $P^m$, $\tilde{A}_t+H_t = \tilde{A}_t + U 1_{[\zeta,\infty[}(t) = A_t$ for $t \ne \zeta$. But $\tilde{A}$, $H$, and $A$ are right continuous and so $\tilde{A} + H$ and $A$ are indistinguishable. Since $\tilde{A}$ and $\tilde{L}$ are indistinguishable, so are $L$ and $A$. At long last, the proof of Theorem 5.27 is complete.
 
  \end{proof}
  
  Final remark. Although we have formulated and proved Theorem 5.27 so as to be directly applicable to the proof of Theorem 5.28, it should be clear that it is also valid if $A$ is an additive functional in the usual sense of Markov processes. That is, if $A$ satisfies (i) of (5.27), $A_t \in \mathcal{F}_t$ and $A_{t+s} = A_s + A_t \circ \theta_s$ a.s $P^x$ for each $x$ for fixed $(s,t)$, then for each $x$, $A$ is $P^x$-indistinguishable from a process $L$ as in the conclusion of (5.27). The only manner the structure of $P^m$ was involved was in showing that if $F: \Omega \rightarrow \overline{\mathbb{R}}^+$, then $P^m(F) =0$ implies $P^m(F \circ \theta_t) =0$. See (A.1) and the end of the proof of (A.5), for example. But if $P^x(F) =0$ for all $x$, then $P^x(F\circ\theta_t) = P(P^{X(t)})(F) =0$. 


\begin{center}\textbf{Standard Reference Books}\end{center}

[BG] Blumethal, R.M. and Getoor, R.K.: \textit{Markov Processes and Potential Theory.} Academic Press, New York, 1968. Dover reprint, 2007. \\

[DM] Dellacherie, C. and Meyer, P.-A: \textit{Probabilit\'e et Potentiel}. Ch. I-IV (1975) English translation 1978, Ch. V-VIII (1980); Ch IX-XI (1983); Ch. XII-XVI (1987). Herman, Paris.\\

[DMM] Dellacherie, C., Maisonneuve, and B., Meyer, P.-A. \textit{Probabilit\'e et Potentiel. Chapitres XVII-XXIV}. Hermann, Paris, 1992.  \\

[G] Getoor, R.K.: \textit{Excessive Measures}. Birkha\"user, Boston, 1990. \\

[S] Sharpe, M.: \textit{General Theory of Markov Processes}. Academic Press, Boston, 1988. \\

\begin{center}\textbf{General References }\end{center} 

\vspace{.1in}

[A73] Azema, J: \textit{Th\'eorie g\'en\'erale des processus et retournement du temps.} Ann. Sci. \'Ecole Norm. Sup. \textbf{6} (1973) 459-519. \\

[CG79] Chung, K.L. and Glover, J: Left continuous moderate Markov processes. \textit{Z. Wahrscheinlichkeitstheorie verw. Geb.} \textbf{49} (1979), 237-248. \\

[CW05] Chung, K.L. and Walsh, J.B.: \textit{Markov processes, Brownian motion, and time symmetry}, (2nd edition). Springer, New York, 2005. \\

[D88] Dellacherie, C.: Autour des ensembles semi-polaires. In \textit{Seminar on Stochastic Processes,} 1987. Birkh\"auser Boston, Boston, MA, 1988, pp. 65-92 \\

[DG85] Dynkin, E.B. and Getoor, R.K.: \textit{Additive Functionals and Entrance Laws} J. Functional Anal. \textbf{62} (1985), 221-265. \\

[FM86] Fitzsimmons, P.J. and Masionneuve, B.: \textit{Excessive measures and Markov processes with random birth and death}. Probab. Theory Relat. Fields \textbf{72} (1986) 319-336. \\

[F87] Fitzsimmons, P.J.: \textit{Homogeneous random measures and a weak order for the excessive measures of a Markov process.} Trans. Amer. Math. Soc. \textbf{303} (1987) 431-478. \\

[FO84] Folland, G.B.: \textit{Real Analysis Modern Techniques and Their Applications}. Wiley-Interscience, New York, 1984. \\

[G75a] Getoor, R.K.: \textit{Markov processes: Ray processes and right processes.} Lecture Notes in Mathematics, \textbf{440}. Springer-Verlag, Berlin-New York, 1975. \\

[G75b] Getoor, R.K.: \textit{On the construction of kernels.} Lecture Notes in Mathematics, \textbf{465}. Springer-Verlag, Berlin, 1975. \\

[G87] Getoor, R.K.: \textit{Measures that are translation invariant in one coordinate}. Seminar on Stochastic Processes, 1986. Birkh\"auser, Boston, 1987. \\

[H66] Hunt, G.A.: \textit{Martingales et Processus de Markov.} Dunod, Paris, 1966. \\

[J78] Jeulin, Th.: \textit{Compactification de Martin d'un processus droit.} Z. Wahrscheinlichkeitstheorie verw. Geb. \textbf{42} (1978), 229-260. \\

[M71] Meyer, P.-A: \textit{Le retournement du temps, d'apr\`es Chung et Walsh}. Lecture Notes in Math. \textbf{191}, Springer-Verlag, Berlin. 1971. \\

[M74] Meyer, P.-A: \textit{Ensembles al\'eatoire markoviens homog\'enes, II}. Lecture Notes in Math. \textbf{381}, Springer-Verlag, Berlin, 1974. \\

[W72] Walsh, J.B.: \textit{The perfection of multiplicative functionals.} Lecture Notes in Math. 258, Springer-Verlag, Berlin, 1972.

\end{document}